\newtheorem{theorem}{Theorem}[section]
\newtheorem{lemma}[theorem]{Lemma}
\newtheorem{proposition}{Proposition}[section]
\theoremstyle{definition}
\newtheorem{definition}[theorem]{Definition}
\theoremstyle{remark}
\newtheorem{remark}[theorem]{Remark}
\numberwithin{equation}{section}
\newcommand{\R}{{\mathbb R}}
\newcommand{\N}{{\mathbb N}}
\newcommand{\essinf}{\mathop{\rm {ess\,inf}}\limits}   
\newcommand{\eqdef}{\stackrel{{\rm {def}}}{=}}   
\newcommand{\wstarconverge}{\stackrel{*}{\rightharpoonup}}
\newcommand{\dist}{\mathop{\mathrm{dist}}}
\begin{document}

\title{A singular parabolic equation: existence, stabilization}

\author{Mehdi Badra}
\address{LMAP (UMR 5142), Bat.~IPRA, Universit\'e de Pau et des Pays de l'Adour, Avenue de l'Universit\'e, 64013 cedex Pau, France}
\email{mehdi.badra@univ-pau.fr}
\author{Kaushik Bal}
\address{LMAP (UMR 5142), Bat.~IPRA, Universit\'e de Pau et des Pays de l'Adour, Avenue de l'Universit\'e, 64013 cedex Pau, France}
\email{kaushik.bal@univ-pau.fr}

\author{Jacques Giacomoni}
\address{LMAP (UMR 5142), Bat.~IPRA, Universit\'e de Pau et des Pays de l'Adour, Avenue de l'Universit\'e, 64013 cedex Pau, France}
\email{jgiacomo@univ-pau.fr}

\subjclass{Primary 35J65, 35J20; Secondary 35J70}

\keywords{quasilinear parabolic equation, singular nonlinearity, existence of weak solution, weak comparison principle, sub and supersolutions, cone condition, time\--semi\--discretization, semigroup theory for non\--linear operators}
\begin{abstract}
We investigate the following quasilinear parabolic and singular equation,
\begin{equation}
\nonumber
\tag{{\rm P$_t$}}
\left\{
\begin{aligned}
& u_t-\Delta_p u =\frac{1}{u^\delta}+f(x,u)\;\text{ in }\,(0,T)\times\Omega,\\
& u =0\,\text{ on} \;(0,T)\times\partial\Omega,\quad u>0 \text{ in }\, (0,T)\times\Omega,\\
&u(0,x) =u_0(x)\;\text{ in }\Omega,
\end{aligned}
\right.
\end{equation}
where $\Omega$ is an open bounded domain with smooth boundary in $\R^{\rm N}$, 
$1 < p< \infty$, 
$0<\delta$ and $T>0$. We assume that $(x,s)\in\Omega\times\R^+\to f(x,s)$ is a bounded below Caratheodory function, locally Lipschitz with respect to $s$ uniformly in $x\in\Omega$ and asymptotically sub\--homogeneous, i.e.
\begin{equation}
\label{sublineargrowth}
0 \leq\displaystyle\lim_{t\to +\infty}\frac{f(x,t)}{t^{p-1}}=\alpha_f<\lambda_1(\Omega),
\end{equation}
(where $\lambda_1(\Omega)$ is the first eigenvalue of $-\Delta_p$ in $\Omega$ with homogeneous Dirichlet boundary conditions) and $u_0\in L^\infty(\Omega)\cap W^{1,p}_0(\Omega)$, satisfying a cone condition defined below.
Then, for any $\delta\in (0,2+\frac{1}{p-1})$, we prove the existence and the uniqueness of a weak solution $u \in{\bf V}(Q_T)$ to $({\rm P_t})$. Furthermore, $u\in C([0,T], W^{1,p}_0(\Omega))$ and the restriction $\delta<2+\frac{1}{p-1}$ is sharp. The proof involves a semi-discretization in time approach and the study of the stationary problem associated to $({\rm P_t})$.
The key points in the proof is to show that $u$ belongs to the cone ${\mathcal C}$ defined below and by the weak comparison principle that $\frac{1}{u^\delta}\in L^\infty(0,T;W^{-1,p'}(\Omega))$ and $u^{1-\delta}\in L^\infty(0,T;L^1(\Omega))$. When $t\to \frac{f(x,t)}{t^{p-1}}$ is non\--increasing for a.e. $x\in\Omega$, we show that $u(t)\to u_\infty$ in $L^\infty(\Omega)$ as $t\to\infty$, where $u_\infty$ is the unique solution to the stationary problem. This stabilization property is proved by using the accretivity of a suitable operator in $L^\infty(\Omega)$.
 
Finally, in the last section we analyse the case $p=2$. Using the interpolation spaces theory and the semigroup theory, we prove the existence and the uniqueness of weak solutions to $({\rm P}_t)$ for any $\delta>0$ in $C([0,T], L^2(\Omega))\cap L^\infty(Q_T)$ and under suitable assumptions on the initial data we give additional regularity results. Finally, we describe their asymptotic behaviour in $L^\infty(\Omega)\cap H^1_0(\Omega)$ when $\delta<3$. 
\end{abstract}

\maketitle


\specialsection*{Introduction}
In the present paper we investigate the following quasilinear and singular parabolic problem~:
\begin{equation}
\nonumber
\tag{{\rm P$_t$}}
\left\{
\begin{aligned}
& u_t-\Delta_p u =\frac{1}{u^\delta}+f(x,u)\;\text{ in }\,Q_T,\\
& u =0\,\text{ on} \;\Sigma_T,\quad u>0 \text{ in }\, Q_T,\\
&u(0,x) =u_0(x)\;\text{ in }\Omega,
\end{aligned}
\right.
\end{equation}
where $\Omega$ is an open bounded domain with smooth boundary in $\R^{\rm N}$ (with ${\rm N}\geq 2$), $1 < p< \infty$, $0<\delta$, $T>0$, $Q_T=(0,T)\times\Omega$ and $\Sigma_T=(0,T)\times\partial\Omega$. 
We assume that $f$ is a bounded below Caratheodory function, locally Lipschitz with respect to the second variable uniformly in $x\in\Omega$ and satisfying \eqref{sublineargrowth} and $u_0\in L^\infty(\Omega)\cap W^{1,p}_0(\Omega)$. 
Such a problem arises in different models: non newtonian flows, chemical hetereogeneous catalyst kinetics, combustion. We refer to the survey {\sc Hern\'andez-Mancebo-Vega} \cite{HeMaVe},  the book {\sc Ghergu-Radulescu} \cite{GhRa} and the bibliography therein for more details about the corresponding models. 
One of our main goals is to prove the existence and the uniqueness of the weak solution to $({\rm P}_t)$. We define the notion of weak solution for the following more general problem
\begin{equation}
\nonumber
({\rm S}_t)\left\{
\begin{aligned}
&u_t-\Delta_p u =\frac{1}{u^\delta}+h(x,t) \;\text{ in }\,Q_T,\\
&u =0\,\text{ on} \;\Sigma_T,\quad u>0 \text{ in }\, Q_T,\\
& u(0,x) =u_0(x)\;\text{ in }\Omega,
\end{aligned}
\right.
\end{equation} 
where $0<T$, $h\in L^\infty(Q_T)$, $0<\delta<2+\frac{1}{p-1}$, $u_0\in L^\infty(\Omega)\cap W^{1,p}_0(\Omega)$, as follows
\begin{definition}
\begin{eqnarray*}
\nonumber
{\bf V}(Q_T)\eqdef\left \{u\,:\,u\in L^\infty(Q_T),\, u_t \in L^2(Q_T),\, u\in L^\infty(0,T;W^{1,p}_0(\Omega))\right \}
\end{eqnarray*}
\end{definition}
and 
\begin{definition}\label{Def0}
 A {\it weak} solutions to $({\rm S}_t)$ is a function $u\in {\bf V}(Q_T)$ satisfying
\begin{itemize}
\item[1.] for any compact $K\subset Q_T$, $\displaystyle\essinf_K u>0$,
\item[2.] for every test function $\phi\in{\bf V}(Q_T)$,
\begin{eqnarray*}
\int_{Q_T}\left(\phi\frac{\partial u}{\partial t}-|\nabla u|^{p-2}\nabla u\nabla\phi-\phi(\frac{1}{u^\delta}+h(t,x))\right){\rm d}x{\rm d}t=0,
\end{eqnarray*} 
\item[3.] $u(0,x)=u_0(x)$ a.e in $\Omega$.
\end{itemize}
\end{definition}
\begin{remark}
Since every $u\in {\bf V}(Q_T)$ belongs to $C(0,T;\,L^2(\Omega))$, the third point of the above definition is meaningful.
\end{remark}

 The approach we use is to study first the existence of solutions to the stationary problem $({\rm P})$ that is for $g\in L^\infty(\Omega)$, $\lambda >0$
\begin{eqnarray*}
({\rm P})\left\{\begin{array}{rll}
\displaystyle u-\lambda\big{(}\Delta_p u+\frac{1}{u^\delta}\big{)}&\displaystyle =g \;&\displaystyle \mbox{in }\,\Omega,\\
\displaystyle u&\displaystyle =0\; &\displaystyle \mbox{on }\;\partial\Omega.
\end{array}\right.
\end{eqnarray*}
To control the singular term $\frac{1}{u^\delta}$, we need to consider solutions in a conical shell ${\mathcal C}$ defined as the set of functions $v\in L^\infty(\Omega)$ such that there exists $c_1>0$ and $c_2>0$ satisfying
\begin{eqnarray*}
\nonumber
\left\{ 
 \begin{array}{ll}
 \displaystyle c_1d(x)\leq v \leq c_2d(x)\; & \displaystyle  \mbox{ if }\;\delta<1,\\
  \displaystyle  c_1d(x){\log}^{\frac{1}{p}}(\frac{k}{d(x)})\leq v \leq c_2d(x){\log}^{\frac{1}{p}}(\frac{k}{d(x)})\; & \displaystyle  \mbox{ if }\;\delta=1,\nonumber\\
  \displaystyle  c_1d(x)^{\frac{p}{\delta+p-1}}\leq v \leq c_2\left(d(x)^{\frac{p}{\delta+p-1}}+d(x)\right)\; & \displaystyle  \mbox{ if }\;\delta>1,
 \end{array}
\right.
 \end{eqnarray*}
where $d(x)\eqdef\dist(x,\partial\Omega)$ and $k>0$ is large enough. Regarding Problem $({\rm P})$, we prove the following\\
\begin{theorem}\label{1} Let $g\in L^\infty(\Omega)$ and $0<\delta< 2+\frac{1}{p-1}$. Then for any $\lambda>0$, problem $({\rm P})$ admits a unique solution $u_\lambda$ in $W^{1,p}_0(\Omega)\cap{\mathcal C}\cap C_0(\overline{\Omega})$.
%
%
\end{theorem}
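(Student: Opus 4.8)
\medskip

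The plan is to attack the stationary problem $({\rm P})$ by the classical method of sub\--and\--supersolutions adapted to the singular setting, combined with variational and monotonicity arguments to obtain existence, the cone localization, and uniqueness. First I would rewrite $({\rm P})$ as $-\Delta_p u = \frac{1}{\lambda}(g-u)+\frac{1}{u^\delta} =: F(x,u)$, so that the problem becomes a quasilinear elliptic equation with a singular absorption/source term; note that although $s\mapsto \frac{1}{\lambda}(g-s)$ is decreasing, the singular term is also decreasing in $s$, so the full right\--hand side is not monotone and some care is needed. The natural strategy is to construct an ordered pair $(\underline{u},\overline{u})$ of sub\-- and supersolutions lying in the cone ${\mathcal C}$. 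For the \emph{subsolution} I would take a small multiple $\eps\,\varphi$ of a suitable power/log modification of the torsion function or the first eigenfunction (precisely the lower profile appearing in the definition of ${\mathcal C}$: $d(x)$ if $\delta<1$, $d(x)\log^{1/p}(k/d(x))$ if $\delta=1$, $d(x)^{p/(\delta+p-1)}$ if $\delta>1$); a direct computation of $-\Delta_p$ of these profiles near $\partial\Omega$ shows that the singular term $u^{-\delta}$ dominates, which is exactly why these exponents are chosen, and this is where the constraint $\delta<2+\frac{1}{p-1}$ enters (it guarantees $\frac{p}{\delta+p-1}>\frac{p-1}{p}$... more precisely it is the threshold making the profile still in $W^{1,p}_0$ and making the construction consistent). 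For the \emph{supersolution} I would take a large multiple of the solution $w$ of an auxiliary problem such as $-\Delta_p w = \frac{1}{\lambda}\|g\|_\infty + C\,\mathbf 1$ with the singular term estimated from below using the subsolution; alternatively solve $-\Delta_p \overline u = \frac{1}{\lambda}(g)_+ + \underline u^{-\delta}$, which is non\--singular once $\underline u$ is fixed, and check $\overline u\le c_2(d^{p/(\delta+p-1)}+d)$ by standard barrier/regularity estimates (Hopf lemma, $L^\infty$ bounds, boundary regularity for the $p$\--Laplacian).

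\medskip

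With the ordered pair in hand, existence of a solution $u$ with $\underline u\le u\le \overline u$ follows from a standard iteration/fixed\--point scheme: truncate the singular term below $\underline u$ so that the nonlinearity becomes bounded and Carath\'eodory, add a large $k\,u$ term to both sides to make the shifted operator monotone, and either run a monotone iteration starting from $\overline u$ (using the weak comparison principle for $-\Delta_p + k\,\mathrm{Id}$) or apply Schauder's fixed point theorem to the solution map in $C_0(\overline\Omega)$, whose compactness comes from the $C^{1,\alpha}$ or $C^{0,\alpha}$ regularity theory for the $p$\--Laplacian. Because the resulting $u$ satisfies $u\ge \underline u\ge c_1(\text{lower profile})$, the truncation is inactive and $u$ solves the original equation; simultaneously $u\le\overline u$ gives the upper bound in ${\mathcal C}$, so $u\in W^{1,p}_0(\Omega)\cap{\mathcal C}\cap C_0(\overline\Omega)$. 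The membership $\frac{1}{u^\delta}\in L^1$ (or better, in $W^{-1,p'}$) needed to make the weak formulation meaningful is a consequence of the cone bounds, exactly as announced in the introduction.

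\medskip

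For \emph{uniqueness} I would use the structure $u-\lambda\Delta_p u - \lambda u^{-\delta}=g$ and exploit that $u\mapsto u - \lambda\Delta_p u$ is strictly monotone while $u\mapsto -\lambda u^{-\delta}$ is \emph{also} monotone nondecreasing... wait, $-u^{-\delta}$ is increasing in $u$, so in fact the whole left\--hand operator $u\mapsto u-\lambda\Delta_p u-\lambda u^{-\delta}$ is strictly monotone on the cone. Concretely, if $u_1,u_2$ are two solutions, subtract the equations, test with $(u_1-u_2)_+$ (legitimate since both lie in the cone, so the singular terms are in $L^1$ and the products integrable), and use the monotonicity of $-\Delta_p$, the strict monotonicity of the identity term, and the monotonicity of $-s^{-\delta}$ to conclude $(u_1-u_2)_+=0$; symmetrically $(u_2-u_1)_+=0$, hence $u_1=u_2$. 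This is a weak comparison principle argument and should be routine once the integrability from the cone condition is available.

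\medskip

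The main obstacle, in my view, is the sub/supersolution construction near the boundary, specifically verifying by explicit computation that the chosen singular profiles are genuine sub\--/supersolutions of the full equation and that the exponents/logarithmic corrections are compatible with $W^{1,p}_0(\Omega)$ precisely when $\delta<2+\frac{1}{p-1}$ — establishing that this bound is exactly the threshold for the construction (and for $u^{1-\delta}\in L^1$, equivalently $\frac{p}{\delta+p-1}(1-\delta)>-1$ type conditions) is the delicate quantitative heart of the proof. A secondary technical point is justifying all the test\--function manipulations in the presence of the singular term, which requires the a priori cone bounds and hence a careful ordering of the argument (first get the lower bound, then everything else).
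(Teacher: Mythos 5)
Your overall architecture (localization in the cone via an ordered pair of barriers with the profiles $d$, $d\log^{1/p}(k/d)$, $d^{p/(\delta+p-1)}$, the threshold computation showing $\delta<2+\frac{1}{p-1}$ is exactly what puts these profiles in $W^{1,p}_0(\Omega)$ and makes $u^{-\delta}$ act on $W^{1,p}_0(\Omega)$ through Hardy, and uniqueness by monotonicity of $u\mapsto u-\lambda(\Delta_p u+u^{-\delta})$ on the cone) is sound, and your uniqueness argument is essentially the one used in the paper. The genuine gap is in the existence mechanism for $\delta\geq 1$. You claim that truncating the singular term below $\underline{u}$ makes the nonlinearity ``bounded and Carath\'eodory'': this is false once $\delta\geq 1$, since the truncated term is only dominated by $\underline{u}^{-\delta}\sim d(x)^{-\delta p/(\delta+p-1)}$, which is unbounded near $\partial\Omega$ and not even in $L^1(\Omega)$ (the exponent is $\geq 1$ exactly when $\delta\geq 1$); it is only an element of $W^{-1,p'}(\Omega)$, and only because $\delta<2+\frac{1}{p-1}$. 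Consequently the ``standard'' machinery you invoke does not apply as stated: a monotone iteration with an added $ku$ term fails because the Lipschitz constant of $s\mapsto s^{-\delta}$ on $[\underline{u}(x),\overline{u}(x)]$ blows up as $x\to\partial\Omega$, so no finite $k$ restores monotonicity of the truncated right-hand side; if instead you keep $-u^{-\delta}$ inside the operator to regain monotonicity, each iteration step is itself a singular problem of exactly the type whose solvability Theorem~\ref{1} asserts, which is circular; and the Schauder route in $C_0(\overline{\Omega})$ needs compactness, i.e.\ H\"older regularity up to the boundary for solutions with a right-hand side that is only $W^{-1,p'}$/singular, plus an argument that the fixed point stays in the order interval — these are precisely the nontrivial inputs (of the type of Theorem~B.1 in \cite{GiScTa}) that your sketch treats as routine. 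Also, your parenthetical in the uniqueness step that ``the singular terms are in $L^1$'' is wrong for $\delta\geq1$; only the products $(u_1^{-\delta}-u_2^{-\delta})(u_1-u_2)$ and the $W^{-1,p'}$ pairings are controlled, again via the cone bounds and Hardy.

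For comparison, the paper bypasses these difficulties differently: for $\delta<1$ it minimizes the strictly convex, coercive energy $E_\lambda$ directly (the potential $u^{1-\delta}$ is then integrable), and localizes the minimizer in ${\mathcal C}$ by comparison with $\epsilon\phi_1$ from below and with a large multiple of the solution $U$ of \eqref{eqhom} from above; for $\delta\geq 1$ it regularizes the singularity, solving $({\rm P}_\epsilon)$ with $\frac{1}{(u+\epsilon)^\delta}$ (a genuinely nonsingular problem), sandwiches $u_\epsilon$ between explicit $\epsilon$-dependent sub/supersolutions \eqref{epsisub}--\eqref{epsisup2}, uses comparison in $\epsilon$ to get a monotone, uniform limit in $C_0(\overline{\Omega})$, and then recovers convergence in $W^{1,p}_0(\Omega)$ from the Hardy inequality (this is where $\delta<2+\frac{1}{p-1}$ enters) together with the algebraic inequalities \eqref{inegaliteAlg}--\eqref{inegaliteAlg2}. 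If you want to salvage your route, you must either adopt such an $\epsilon$-regularization, or prove separately the existence and boundary regularity for the truncated-but-still-singular auxiliary problems; as written, the key existence step for $\delta\geq1$ rests on an incorrect boundedness claim.
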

Concerning the case where $\delta\geq 2+\frac{1}{p-1}$, we prove the following
\begin{theorem}\label{2} Let $g\in L^\infty(\Omega)$ and $\delta\geq 2+\frac{1}{p-1}$. Then for any $\lambda >0$, problem $({\rm P})$ admits a solution $u_\lambda$ in $W^{1,p}_{\rm loc}(\Omega)\cap{\mathcal C}\cap C_0(\overline{\Omega})$ such that $u_\lambda \not\in W^{1,p}_0(\Omega)$.
\end{theorem}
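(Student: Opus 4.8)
The plan is to construct the solution $u_\lambda$ to $({\rm P})$ when $\delta \geq 2+\frac{1}{p-1}$ by an approximation scheme: truncate the singular term and pass to the limit, while proving that the limit lies in the cone $\mathcal{C}$ but \emph{fails} to lie in $W^{1,p}_0(\Omega)$. Concretely, for $\eps>0$ I would consider the regularized problem
\begin{equation*}
u_\eps - \lambda\Big(\Delta_p u_\eps + \tfrac{1}{(u_\eps+\eps)^\delta}\Big) = g \ \text{ in } \Omega, \qquad u_\eps = 0 \ \text{ on } \partial\Omega,
\end{equation*}
which has a unique solution $u_\eps \in W^{1,p}_0(\Omega) \cap C_0(\overline{\Omega})$ by standard monotone operator theory (the right-hand side is bounded and the nonlinearity $s\mapsto (s+\eps)^{-\delta}$ is bounded and decreasing). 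By the weak comparison principle the family $\{u_\eps\}$ is monotone in $\eps$, hence $u_\eps \uparrow u_\lambda$ pointwise as $\eps \downarrow 0$ for some measurable $u_\lambda$.

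Next I would establish the two-sided pointwise bounds that place $u_\lambda$ in $\mathcal{C}$, uniformly in $\eps$. For the \emph{lower} bound I would use as subsolution a suitable multiple of $d(x)^{p/(\delta+p-1)}$ (this is where the exponent in the definition of $\mathcal{C}$ comes from: with $w = d^{\beta}$, $\beta = p/(\delta+p-1)$, one has $-\Delta_p w \sim d^{(\beta-1)(p-1)} = d^{-\beta\delta}$ near $\partial\Omega$, matching $w^{-\delta}$), combined with the strong maximum principle in the interior to get $\essinf_K u_\eps > 0$ on compacts. For the \emph{upper} bound I would compare against $c_2(d^{p/(\delta+p-1)} + d)$: the $d$ term absorbs the contribution of $g$ and the zeroth-order term $u$, while the $d^{\beta}$ term dominates the singular source near the boundary; here the techniques of Theorem~\ref{1}'s proof (barrier constructions via the distance function, boundary Harnack / Hopf-type estimates for $-\Delta_p$) carry over. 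Interior $W^{1,p}_{\mathrm{loc}}$ regularity then follows from the uniform local positivity and standard quasilinear elliptic regularity applied on compact subsets, and passing to the limit gives $u_\lambda \in W^{1,p}_{\mathrm{loc}}(\Omega) \cap \mathcal{C} \cap C_0(\overline{\Omega})$ solving $({\rm P})$ in the appropriate weak (distributional, tested against $C^\infty_c(\Omega)$) sense.

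The crux — and the main obstacle — is proving that $u_\lambda \notin W^{1,p}_0(\Omega)$ when $\delta \geq 2+\frac{1}{p-1}$, i.e.\ that the gradient genuinely blows up at the boundary at a non-integrable rate. The idea is a lower estimate $u_\lambda(x) \geq c\, d(x)^{p/(\delta+p-1)}$ near $\partial\Omega$ (from the subsolution above), together with a matching \emph{gradient} lower bound: since $u_\lambda$ behaves like $d^{\beta}$ with $\beta < 1$ precisely when $\delta > 2+\frac1{p-1}$... more carefully, $\beta = \frac{p}{\delta+p-1} \le 1 \iff \delta \ge 1$, and $|\nabla d^\beta|^p \sim d^{(\beta-1)p}$ is non-integrable near $\partial\Omega$ exactly when $(\beta-1)p \le -1$, i.e.\ $\delta \ge 2 + \frac1{p-1}$. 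So one must show the genuine asymptotic $u_\lambda \asymp d^\beta$ near the boundary (not merely $\geq$), which requires the two-sided cone bounds to be \emph{compatible in exponent} near $\partial\Omega$ — this forces a careful argument that the $d$-term in the upper bound is lower-order, so that the leading behaviour is exactly $d^\beta$, and then a direct computation that $\int_\Omega |\nabla u_\lambda|^p\,dx = +\infty$. I expect this sharpness argument — making the blow-up rate rigorous rather than just the a priori bounds — to be the delicate part, likely handled by testing the equation against truncations and deriving a contradiction with a finite Dirichlet energy, or by an explicit sub-/supersolution sandwich in a boundary collar where the geometry is essentially one-dimensional in $d(x)$.
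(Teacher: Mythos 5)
Your construction scheme (regularize the nonlinearity by $(u+\epsilon)^{-\delta}$ on the fixed domain, use monotonicity in $\epsilon$, barriers, and interior regularity) is a legitimate alternative to the paper's route, which instead solves the problem on an exhausting sequence of subdomains $\Omega_k$ with boundary datum $\underline{u}=\eta\phi_1^{p/(p-1+\delta)}$ and passes to a monotone limit using local $C^1$ estimates. But the step you yourself identify as the crux — $u_\lambda\notin W^{1,p}_0(\Omega)$ — has a genuine gap as you propose to prove it. Two-sided pointwise bounds $c_1 d^\beta\le u_\lambda\le c_2 d^\beta$, $\beta=\frac{p}{\delta+p-1}$, give \emph{no} pointwise lower bound on $|\nabla u_\lambda|$, so "a direct computation that $\int_\Omega|\nabla u_\lambda|^p\,{\rm d}x=+\infty$" does not follow from the asymptotics, and no "matching gradient lower bound" comes out of barrier arguments. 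Moreover, the sharp two-sided asymptotics you plan to establish are not needed: a one-sided bound suffices. Either use Hardy's inequality: if $u_\lambda\in W^{1,p}_0(\Omega)$ then $\int_\Omega (u_\lambda/d)^p\,{\rm d}x<\infty$, while $u_\lambda\ge c_1 d^\beta$ gives $(u_\lambda/d)^p\ge c_1^p d^{(\beta-1)p}$ with $(\beta-1)p\le -1$ precisely when $\delta\ge 2+\frac{1}{p-1}$, a contradiction; or argue as the paper does, from the equation: if $u_\lambda\in W^{1,p}_0(\Omega)$ then $u_\lambda^{-\delta}=\lambda^{-1}(u_\lambda-g)-\Delta_p u_\lambda\in W^{-1,\frac{p}{p-1}}(\Omega)$, so pairing with $u_\lambda$ yields $\int_\Omega u_\lambda^{1-\delta}\,{\rm d}x<\infty$, which contradicts the upper bound $u_\lambda\le \overline{u}\sim d^\beta$ since $\beta(1-\delta)\le -1$ in this range of $\delta$. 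Your parenthetical mention of "testing the equation against truncations" gestures at the second argument, but as written your primary route is not a proof.

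A secondary problem lies in your lower barrier: a fixed multiple of $d^\beta$ is \emph{not} a subsolution of the regularized problem uniformly in $\epsilon$, because near $\partial\Omega$ the regularized singular term $(u+\epsilon)^{-\delta}$ stays bounded by $\epsilon^{-\delta}$ while $-\Delta_p(\eta d^\beta)\sim \eta^{p-1}d^{-\beta\delta}\to+\infty$; you need $\epsilon$-adapted subsolutions of the type \eqref{epsisub2}, exactly as in the paper's proof of Theorem~\ref{1} for $\delta\ge 1$, and then pass to the limit to recover $u_\lambda\ge\eta\phi_1^{\beta}$ (note that for fixed $\epsilon_0$ the comparison only yields $u_\lambda\ge c\,d$, which is weaker than $c\,d^\beta$ near $\partial\Omega$ when $\delta>1$). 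Finally, a small slip in your heuristic: for $w=d^\beta$ one has $-\Delta_p w\sim d^{(\beta-1)(p-1)-1}$, not $d^{(\beta-1)(p-1)}$; it is the extra $-1$ from the divergence that makes $(\beta-1)(p-1)-1=-\beta\delta$ and hence selects the exponent $\beta=\frac{p}{\delta+p-1}$, so your conclusion is right but the displayed computation is off.
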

In view of establishing Theorem~\ref{3ter} below, we need to prove the following result:
\begin{theorem}\label{2bis} Let $0<\delta< 2+\frac{1}{p-1}$ and $f:\,\Omega\times\R^+\to {\mathbb R}$ be a bounded below Caratheodory function, locally Lipschitz with respect to the second variable uniformly in $x\in\Omega$, satisfying \eqref{sublineargrowth} and such that $\frac{f(x,s)}{s^{p-1}}$ is a decreasing function in $\R^+$ for a.e. $x\in\Omega$. Then there exists  a unique $u_\infty$ in $W^{1,p}_0(\Omega)\cap{\mathcal C}\cap C_0(\overline{\Omega})$ satisfying
\begin{eqnarray*}
\nonumber
({\rm Q})\left\{\begin{array}{rll}
\displaystyle -\Delta_p u_\infty-\frac{1}{u_\infty^\delta}&=f(x,u_\infty) \;&\displaystyle \mbox{in }\,\Omega,\\
\displaystyle u_\infty&=0\; &\displaystyle \mbox{on }\;\partial\Omega.
\end{array}\right.
\end{eqnarray*}
\end{theorem}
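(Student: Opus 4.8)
The plan is to obtain $u_\infty$ as the limit of the solutions $u_\lambda$ of the stationary problem $({\rm P})$ provided by Theorem~\ref{1}, with $g=0$ and $\lambda\to\infty$. More precisely, for each $\lambda>0$ let $u_\lambda\in W^{1,p}_0(\Omega)\cap{\mathcal C}\cap C_0(\overline\Omega)$ be the unique solution of $u_\lambda-\lambda(\Delta_p u_\lambda+u_\lambda^{-\delta})=\lambda f(x,u_\lambda)$; dividing by $\lambda$ rewrites this as $-\Delta_p u_\lambda-u_\lambda^{-\delta}=f(x,u_\lambda)-\frac1\lambda u_\lambda$. First I would establish uniform-in-$\lambda$ bounds: an $L^\infty$ bound coming from the sub-homogeneity hypothesis \eqref{sublineargrowth} (which forces $\alpha_f<\lambda_1(\Omega)$ and hence prevents blow-up, via comparison with a suitable multiple of the first eigenfunction as a supersolution), and then a uniform cone localization $c_1\Phi\le u_\lambda\le c_2\Phi$ with $\Phi$ the profile defining ${\mathcal C}$, by the same sub/supersolution construction used to prove Theorem~\ref{1} but now with constants independent of $\lambda$ once $\lambda$ is large (the $\frac1\lambda u_\lambda$ term is a lower-order perturbation). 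These two bounds yield, through the standard $p$-Laplacian a priori estimate (testing with $u_\lambda$ and using that $u_\lambda^{-\delta}\in L^\infty(0,T;W^{-1,p'})$ on the cone), a uniform bound in $W^{1,p}_0(\Omega)$, hence along a subsequence $u_\lambda\rightharpoonup u_\infty$ in $W^{1,p}_0(\Omega)$ and $u_\lambda\to u_\infty$ in $C_0(\overline\Omega)$ and a.e.

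Next I would pass to the limit in the equation. The cone bound gives $u_\lambda^{-\delta}\le c_1^{-\delta}\Phi^{-\delta}$ with an $L^1$-integrable (indeed $W^{-1,p'}$) majorant independent of $\lambda$, so $u_\lambda^{-\delta}\to u_\infty^{-\delta}$ strongly enough (dominated convergence) to pass to the limit against test functions vanishing near $\partial\Omega$; the term $\frac1\lambda u_\lambda\to0$ uniformly; and $f(x,u_\lambda)\to f(x,u_\infty)$ by the Carath\'eodory/locally Lipschitz assumption together with the uniform $L^\infty$ bound. The only genuinely delicate analytic point is upgrading weak to strong convergence of the gradients so that $|\nabla u_\lambda|^{p-2}\nabla u_\lambda\rightharpoonup|\nabla u_\infty|^{p-2}\nabla u_\infty$; this I would handle by the usual monotonicity trick (test the difference of equations with $u_\lambda-u_\infty$, use the uniform control of the singular and $f$ terms to show $\int|\nabla u_\lambda|^{p-2}\nabla u_\lambda\cdot\nabla(u_\lambda-u_\infty)\to0$, then invoke the $(S_+)$ property of $-\Delta_p$ on $W^{1,p}_0(\Omega)$). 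This produces a solution $u_\infty\in W^{1,p}_0(\Omega)\cap{\mathcal C}\cap C_0(\overline\Omega)$ of $({\rm Q})$.

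Finally, uniqueness. Here is where the hypothesis that $s\mapsto f(x,s)/s^{p-1}$ is decreasing is used, via the generalized (Diaz--Saa type) convexity/hidden-convexity argument: if $u$ and $v$ are two solutions in ${\mathcal C}$, test the equation for $u$ with $\frac{u^p-v^p}{u^{p-1}}$ and the equation for $v$ with $\frac{v^p-u^p}{v^{p-1}}$ — both are admissible test functions precisely because $u,v$ lie in the same cone ${\mathcal C}$, which makes the quotients bounded and in $W^{1,p}_0(\Omega)$ — add the two identities, and use the pointwise inequality $|\nabla u|^{p-2}\nabla u\cdot\nabla\big(\frac{u^p-v^p}{u^{p-1}}\big)+|\nabla v|^{p-2}\nabla v\cdot\nabla\big(\frac{v^p-u^p}{v^{p-1}}\big)\ge0$ (Picone's inequality) on the left, while on the right one gets $\int(u^p-v^p)\big(\frac{f(x,u)}{u^{p-1}}-\frac{f(x,v)}{v^{p-1}}\big)+\int(u^p-v^p)\big(u^{-\delta-p+1}-v^{-\delta-p+1}\big)$, both of which are $\le0$ since $s\mapsto f(x,s)/s^{p-1}$ and $s\mapsto s^{-\delta-p+1}$ are decreasing. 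Hence all terms vanish, which forces $u\equiv v$. The main obstacle I anticipate is making the cone bounds on $u_\lambda$ uniform in $\lambda$ (so that the limit stays in ${\mathcal C}$ and the singular term is controlled in the limit) — that is the step that most heavily reuses the sub/supersolution machinery behind Theorem~\ref{1}, and one must check that the $\lambda$-dependent lower-order term does not spoil the barrier construction near $\partial\Omega$.
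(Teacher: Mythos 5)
There is a genuine gap at the very first step of your existence argument. You take as your starting point ``let $u_\lambda\in W^{1,p}_0(\Omega)\cap{\mathcal C}\cap C_0(\overline\Omega)$ be the unique solution of $u_\lambda-\lambda(\Delta_p u_\lambda+u_\lambda^{-\delta})=\lambda f(x,u_\lambda)$'', invoking Theorem~\ref{1}; but Theorem~\ref{1} only solves $u-\lambda(\Delta_p u+u^{-\delta})=g$ for a \emph{fixed} datum $g\in L^\infty(\Omega)$, whereas here the right-hand side depends on the unknown, so neither existence nor uniqueness of your $u_\lambda$ is covered by it. Dividing by $\lambda$, your $u_\lambda$ would solve $-\Delta_p u_\lambda-u_\lambda^{-\delta}+\tfrac1\lambda u_\lambda=f(x,u_\lambda)$, i.e.\ problem $({\rm Q})$ itself with a harmless extra absorption term: producing such a solution already requires the full machinery (ordered sub/supersolutions plus either a truncated variational argument or a monotone iteration handling the nonmonotone $f$) that Theorem~\ref{2bis} is about, so as written the argument defers the main work to an unproven assertion and the subsequent $\lambda\to\infty$ limit (uniform cone bounds, $(S_+)$ property) only adds complications one could avoid by solving $({\rm Q})$ directly. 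That is what the paper does: for $\delta<1$ it minimizes an energy in which $u^{-\delta}$ and $f$ are truncated between the explicit barriers $\underline u=\eta\phi$ and $\overline u$ built from a solution $V$ of $-\Delta_p V=\ell V^{p-1}+V^{-\delta}$ with $\alpha_f<\ell<\lambda_1(\Omega)$ (and the log/power-corrected eigenfunction profiles when $\delta\ge1$), and for $\delta\ge1$ it runs the monotone scheme $-\Delta_p u_n-u_n^{-\delta}+Ku_n=f(x,u_{n-1})+Ku_{n-1}$ with $K$ a local Lipschitz constant of $f$. A related slip: a multiple of $\phi_1$ is never a supersolution of the singular equation near $\partial\Omega$, since $u^{-\delta}$ blows up there while $-\Delta_p(M\phi_1)$ stays bounded; this is precisely why the corrected barriers $V$, $\phi_1\log^{1/p}(A/\phi_1)$, $\phi_1^{p/(p-1+\delta)}$ are used, so your proposed $L^\infty$ bound ``by comparison with a multiple of the first eigenfunction'' would fail as stated.

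Your uniqueness argument, by contrast, is essentially the paper's: the Picone/Di\'az--Saa device with the quotients $\frac{u^p-v^p}{u^{p-1}}$ and $\frac{v^p-u^p}{v^{p-1}}$, admissible because $u,v\in{\mathcal C}$ are comparable, together with the decrease of $s\mapsto f(x,s)/s^{p-1}$ and of $s\mapsto s^{1-\delta-p}$, is exactly the content of Theorem~\ref{WCP} (stated there as a comparison principle between sub- and supersolutions). The paper regularizes with $u+\epsilon$, $v+\epsilon$ before testing, in order to justify the computations and the finiteness of the singular integrals via Hardy's inequality before passing to the limit $\epsilon\to0^+$; you should either do the same or verify admissibility directly. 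So the uniqueness half is sound and parallel to the paper, but the existence half needs to be rebuilt along the lines above rather than bootstrapped from Theorem~\ref{1}.
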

Using a time discretization method, Theorem \ref{1}, energy estimates and the weak comparison principle (see {\sc Cuesta-Tak\'a\v{c}} \cite{CuTa}, {\sc Fleckinger-Tak\'a\v{c}} \cite{FlTa}), we prove the following
\begin{theorem}\label{3}
 Let $0<\delta<2+\frac{1}{p-1}$, $h\in L^\infty(Q_T)$ and  $u_0\in W^{1,p}_0(\Omega)\cap {\mathcal C}$. Then there exists a unique weak solution $u$ to 
\begin{equation}
\label{linearheateq}
\left\{
\begin{aligned}
&u_t-\Delta_p u =\frac{1}{u^\delta}+h(x,t) \;\text{ in }\,Q_T,\\
&u =0\,\text{ on } \;\Sigma_T,\quad u>0 \text{ in }\, Q_T,\\
& u(0,x) =u_0(x)\;\text{ in }\Omega,
\end{aligned}
\right.
\end{equation}
such that $u(t)\in\, {\mathcal C}$ uniformly for $t\in\,[0,T]$. Moreover, $u$ belongs to $C([0,T],W^{1,p}_0(\Omega))$ and satisfies for any $t\in [0,T]$:
\begin{eqnarray}
\label{secondenergyeq}
\int_0^t\int_{\Omega}\big{(}\frac{\partial u}{\partial t}\big{)}^2{\rm d}x{\rm d}s&+&\frac{1}{p}\int_{\Omega}|\nabla u(t)|^p{\rm d}x-\frac{1}{1-\delta}\int_{\Omega}u^{1-\delta}(t){\rm d}x\\
=\int_0^t\int_{\Omega}h\frac{\partial u}{\partial t}{\rm d}x{\rm d}s&+&\frac{1}{p}\int_{\Omega}|\nabla u_0|^p{\rm d}x-\frac{1}{1-\delta}\int_{\Omega}u_0^{1-\delta}{\rm d}x\nonumber.
\end{eqnarray} 
\end{theorem}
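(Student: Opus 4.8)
The plan is to construct the solution by the implicit Euler scheme in time, using Theorem~\ref{1} to solve each step, and then to pass to the limit using the energy identity \eqref{secondenergyeq} as the main source of compactness. Fix $n\in\N$, set $\tau = T/n$, $t_k = k\tau$, and $h_k(x) = \frac1\tau\int_{t_{k-1}}^{t_k} h(s,x)\,{\rm d}s$. Starting from $u^0 = u_0$, define $u^k$ inductively as the unique solution in $W^{1,p}_0(\Omega)\cap{\mathcal C}\cap C_0(\overline\Omega)$ of
\begin{eqnarray*}
u^k - \tau\Big(\Delta_p u^k + \frac{1}{(u^k)^\delta}\Big) = u^{k-1} + \tau h_k \quad\text{in }\Omega,
\end{eqnarray*}
which exists by Theorem~\ref{1} with $\lambda = \tau$ and $g = u^{k-1}+\tau h_k \in L^\infty(\Omega)$. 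The first key point is a \emph{uniform} cone bound: I would show that there are constants $c_1,c_2>0$, independent of $n$ and $k$, with $c_1 d(x) \le u^k \le c_2 d(x)$ (resp. the $\delta=1$, $\delta>1$ variants) for all $k$. This comes from comparing $u^k$ with $n$-independent sub- and supersolutions of the stationary problems: for the lower bound one uses that $u^k$ dominates the solution of $-\Delta_p v = \frac{1}{v^\delta} - C$ for a suitable $C$ (since $h$ and $f$, hence the data, are bounded below and $u^{k-1}\ge 0$), and for the upper bound one uses $\|u_0\|_\infty$, $\|h\|_\infty$ together with a barrier argument and the weak comparison principle of {\sc Cuesta-Tak\'a\v{c}}. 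This uniform membership in ${\mathcal C}$ is exactly what makes $\frac{1}{(u^k)^\delta}$ controlled in $W^{-1,p'}(\Omega)$ uniformly, using $0<\delta<2+\frac1{p-1}$.

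Next I would derive the discrete energy estimate. Testing the $k$-th equation with $\frac{u^k-u^{k-1}}{\tau}$ and using the convexity inequalities $\frac1p\|\nabla u^k\|_p^p - \frac1p\|\nabla u^{k-1}\|_p^p \le \int |\nabla u^k|^{p-2}\nabla u^k\nabla(u^k-u^{k-1})$ and, for the singular term, $-\frac{1}{1-\delta}\int (u^k)^{1-\delta} + \frac{1}{1-\delta}\int (u^{k-1})^{1-\delta} \le -\int \frac{u^k-u^{k-1}}{(u^k)^\delta}$ when $\delta\ne 1$ (with the obvious logarithmic substitute when $\delta=1$; here the uniform cone bound guarantees $(u^k)^{1-\delta}\in L^1(\Omega)$ uniformly), one gets
\begin{eqnarray*}
\sum_{j=1}^k \tau\Big\|\frac{u^j-u^{j-1}}{\tau}\Big\|_{L^2(\Omega)}^2 + \frac1p\|\nabla u^k\|_p^p - \frac{1}{1-\delta}\int_\Omega (u^k)^{1-\delta} \le \sum_{j=1}^k\tau\int_\Omega h_j\frac{u^j-u^{j-1}}{\tau} + \frac1p\|\nabla u_0\|_p^p - \frac{1}{1-\delta}\int_\Omega u_0^{1-\delta},
\end{eqnarray*}
and absorbing the $h$-term by Young's inequality yields bounds, uniform in $n$, for $\sum_j\tau\|\partial_t^\tau u\|_{L^2}^2$ and $\sup_k\|\nabla u^k\|_p$. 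I would then introduce the piecewise-affine interpolant $u_n(t)$ and the piecewise-constant interpolant $\bar u_n(t)$; the bounds above give $u_n$ bounded in $H^1(0,T;L^2(\Omega))\cap L^\infty(0,T;W^{1,p}_0(\Omega))$, hence (Aubin–Lions, plus $u_n-\bar u_n\to 0$) a subsequence converges strongly in $C([0,T];L^2(\Omega))$ and in $L^p(0,T;W^{1,p}_0(\Omega))$, weakly-$\ast$ in $L^\infty(0,T;W^{1,p}_0)$, with $\partial_t u_n \rightharpoonup u_t$ in $L^2(Q_T)$. The uniform cone bound passes to the limit, so $u(t)\in{\mathcal C}$ uniformly, in particular $\essinf_K u>0$ on compacts $K\subset Q_T$ and $\frac{1}{u^\delta}\in L^\infty(0,T;W^{-1,p'}(\Omega))$; dominated convergence then handles the singular term, and the standard monotonicity (Minty) argument identifies the weak $\Delta_p$-limit. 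This gives a weak solution in ${\bf V}(Q_T)$, and passing to the limit in the discrete energy identity (using weak lower semicontinuity for $\int_0^t(\partial_t u)^2$ and strong $L^2$-convergence for the rest) yields \eqref{secondenergyeq} as an inequality; the reverse inequality follows by testing the limit equation with $\partial_t u$ (legitimate since $\partial_t u\in L^2(Q_T)$ and, thanks to the cone bound, $\frac{1}{u^\delta}\partial_t u = -\frac{1}{1-\delta}\partial_t(u^{1-\delta})$ is integrable), giving equality. Finally, equality in the energy identity forces $t\mapsto\|\nabla u(t)\|_p$ to be continuous, which together with weak continuity in $W^{1,p}_0$ upgrades to $u\in C([0,T];W^{1,p}_0(\Omega))$. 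Uniqueness follows by taking two solutions $u,v$, testing the difference equation with $u-v$, and using the monotonicity of $-\Delta_p$ together with the fact that $s\mapsto -s^{-\delta}$ is nondecreasing, so that $\int\big(\frac1{u^\delta}-\frac1{v^\delta}\big)(u-v)\le 0$, whence $\frac12\frac{d}{dt}\|u-v\|_{L^2}^2\le 0$ and $u\equiv v$.

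The main obstacle I anticipate is the \emph{uniformity} of the cone condition along the scheme: one must produce sub/supersolution barriers for the $k$-th stationary problem whose constants do not deteriorate as $k$ grows or $\tau\to0$. The upper barrier is delicate because the right-hand side $u^{k-1}+\tau h_k$ accumulates the previous iterates; the resolution is that the $u$-term in $u - \tau(\Delta_p u + u^{-\delta}) = g$ is dissipative, so a supersolution of the form $M\,w$ with $w$ a fixed function in ${\mathcal C}$ (e.g. the torsion-type solution of $-\Delta_p w = 1 + w^{-\delta}$) and $M$ depending only on $\|u_0\|_\infty$ and $\|h\|_\infty$ works for every $k$, by induction. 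The lower barrier is comparatively easier since $u^{k-1}\ge 0$ and the data are bounded below, so a fixed subsolution of $-\Delta_p v = v^{-\delta} - C$ does the job. Once this uniform cone trapping is in place, the rest is the by-now standard interplay of Euler scheme, energy estimate, Aubin–Lions compactness and Minty monotonicity, with the singular term rendered harmless by the lower bound $u\ge c_1 d(x)^{p/(\delta+p-1)}$ and the constraint $\delta<2+\frac1{p-1}$ guaranteeing $d(x)^{-\delta p/(\delta+p-1)}\in W^{-1,p'}(\Omega)$.
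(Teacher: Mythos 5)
Your construction follows essentially the same route as the paper: implicit Euler scheme solved at each step by Theorem~\ref{1}, step-independent sub/supersolutions in $\mathcal{C}$ propagated by the weak comparison principle, the two discrete energy estimates with the convexity inequalities for $\frac1p\int|\nabla u|^p$ and $-\frac{1}{1-\delta}\int u^{1-\delta}$, compactness of the interpolants, identification of the limit using monotonicity of $-\Delta_p$ and the Hardy-type control of $u^{-\delta}$ in $W^{-1,p'}(\Omega)$ coming from $\delta<2+\frac{1}{p-1}$, and uniqueness by testing the difference with $u-v$. Up to that point the proposal is sound and matches the paper (the paper proves strong convergence of $\nabla u_{\Delta_t}$ by testing with $u_{\Delta_t}-u$ and the algebraic inequalities \eqref{inegaliteAlg}--\eqref{inegaliteAlg2}, which is equivalent in spirit to your Minty step).

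The genuine gap is in your derivation of the reverse energy inequality and of $C([0,T];W^{1,p}_0(\Omega))$: you propose to ``test the limit equation with $\partial_t u$'', and justify this only by noting that $\partial_t u\in L^2(Q_T)$ and that the singular term is handled by the cone bound. But $\partial_t u$ is not an admissible test function: it is not known to lie in ${\bf V}(Q_T)$ (it has no spatial regularity), so the pairing $\int_0^t\langle-\Delta_p u,\partial_t u\rangle\,{\rm d}s$ is not defined, and the chain rule $\langle-\Delta_p u,\partial_t u\rangle=\frac{\rm d}{{\rm d}t}\frac1p\int_\Omega|\nabla u|^p\,{\rm d}x$ is exactly the identity that must be \emph{proved}, not invoked. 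The paper gets around this precisely by (a) first deducing, from the discrete estimate written between arbitrary times $t_0\le t$ and lower semicontinuity, the one-sided inequality \eqref{secinequality}, which yields right-continuity of $t\mapsto\|u(t)\|_{W^{1,p}_0(\Omega)}$; and (b) obtaining the reverse inequality by testing with the Steklov difference quotient $\tau_k(u)(s)=\frac{u(s+k)-u(s)}{k}$ (which \emph{is} admissible) and using convexity of $v\mapsto\frac1p\int|\nabla v|^p$ and $v\mapsto-\frac{1}{1-\delta}\int v^{1-\delta}$, then letting $k\to0^+$ using the already-established right-continuity. Your outline needs this (or an equivalent regularization-in-time argument) inserted; note also that your final step ``equality in the energy identity forces continuity of $t\mapsto\|\nabla u(t)\|_p$'' implicitly uses continuity of $t\mapsto\int_\Omega u^{1-\delta}(t)\,{\rm d}x$ and of $t\mapsto\int_0^t\int_\Omega(\partial_t u)^2$, which should be stated (both follow from the cone bound and $\partial_t u\in L^2(Q_T)$). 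With that repair the argument coincides with the paper's proof.
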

\begin{remark}
Saying that $u(t)\in\, {\mathcal C}$ uniformly for $t\in\,[0,T]$ means that there exists $\underline{u}$, $\overline{u}\in \mathcal{C}$ such that $\underline{u}(x)\leq u(t,x)\leq \overline{u}(x)$ a.e $(x,t)\in \Omega\times [0,T]$.
\end{remark}
\begin{remark}
By Theorem \ref{2}, the restriction $\delta<2+\frac{1}{p-1}$ is sharp.
\end{remark}
Moreover, we have the following regularity result for solutions to \eqref{linearheateq} which is obtained from the theory of nonlinear monotone operators of \cite{Ba}:
\begin{proposition}
\label{regsobolev}
Assume that hypotheses of Theorem~\ref{3} are satisfied and set: 
\begin{equation}
\label{def-domaine-op}
{\mathcal D}(A)\eqdef\left\{ v\in {\mathcal C}\cap W^{1,p}_0(\Omega)\,|\,Av\eqdef-\Delta_p v-\frac{1}{v^\delta}\in L^\infty(\Omega)\right\}.
\end{equation}
If  in addition $u_0\in \overline{\mathcal{D}(A)}^{L^\infty(\Omega)}$, then the solution $u$ to \eqref{linearheateq} belongs to $C([0,T];C_0(\overline{\Omega}))$ and satisfies
\begin{enumerate}
\item[(i)] if $v$ is another weak solution to $(S_t)$ with initial datum $v_0\in\overline{\mathcal{D}(A)}^{L^\infty(\Omega)} $ and nonhomogeneous term $k\in L^\infty(Q_T)$ then the following estimate holds:
\begin{equation}
\|u(t)-v(t)\|_{L^\infty(\Omega)}\leq \|u_0-v_0\|_{L^\infty(\Omega)}+\int_0^t\|h(s)-k(s)\|_{L^\infty(\Omega)} {\rm d}s,\quad 0\leq t\leq T.\label{estimation}
\end{equation}
\item[(ii)] if $u_0\in {\mathcal D}(A)$ and $h\in W^{1,1}(0,T;L^\infty(\Omega))$ then $u\in W^{1,\infty}(0,T;L^\infty(\Omega))$ and $\Delta_p u+u^{-\delta}\in L^\infty(Q_T)$, and the following estimate holds:
\begin{equation}
\left \|\frac{{\rm d} u(t)}{{\rm d}t}\right \|_{L^\infty(\Omega)}\leq \|\Delta_p u_0+u_0^{-\delta}+h(0)\|_{L^\infty(\Omega)}+\int_0^T\left \|\frac{{\rm d} h(t)}{{\rm d}t}\right \|_{L^\infty(\Omega)}{\rm d}\tau.\label{estimationbiss}
\end{equation}
\end{enumerate}
\end{proposition}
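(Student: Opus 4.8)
The plan is to recognize Problem \eqref{linearheateq} as the abstract Cauchy problem $u' + A u \ni h(t)$, $u(0) = u_0$, in the Banach space $X = L^\infty(\Omega)$, where $A$ is the (possibly multivalued) operator introduced in \eqref{def-domaine-op}, and then to invoke the Crandall--Liggett / Bénilan--Crandall theory of nonlinear semigroups generated by $m$-accretive operators (as developed in \cite{Ba}). The two assertions (i) and (ii) are then \emph{exactly} the standard consequences of that theory: (i) is the $L^1$-type contraction estimate for mild solutions attached to an accretive operator (here with the $L^\infty$ norm and with the forcing term integrated in time via the variation-of-constants inequality), and (ii) is the classical regularity statement that, for $u_0 \in \mathcal{D}(A)$ and $h \in W^{1,1}(0,T;X)$, the mild solution is Lipschitz in time with the displayed bound on $\|u'(t)\|_X$. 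So the real content is not the functional-analytic machinery but the verification of its hypotheses for our concrete singular operator $A$, together with the identification of the mild solution produced by the semigroup with the weak solution $u \in \mathbf{V}(Q_T)$ constructed in Theorem \ref{3}.

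Accordingly, the key steps, in order, are the following. First, I would show that $A$ is accretive in $L^\infty(\Omega)$: this is precisely the $L^\infty$ weak comparison principle for the operator $v \mapsto -\Delta_p v - v^{-\delta}$, namely that if $A u_1 + \mu^{-1} u_1 \leq A u_2 + \mu^{-1} u_2$ pointwise (in the weak sense) with $u_1, u_2 \in \mathcal{D}(A)$ then $u_1 \leq u_2$; the singular term $-v^{-\delta}$ is \emph{decreasing} in $v$, so it only helps accretivity, and the argument is the same comparison principle (\cite{CuTa}, \cite{FlTa}) already used to prove Theorem \ref{3}. Second, I would establish the range condition $R(I + \lambda A) = L^\infty(\Omega)$ for all $\lambda > 0$: this is exactly the solvability of the stationary Problem $(\mathrm{P})$ with datum $g \in L^\infty(\Omega)$, i.e. Theorem \ref{1}, which moreover gives $u_\lambda \in \mathcal{C} \cap W^{1,p}_0(\Omega)$, so the resolvent $J_\lambda = (I+\lambda A)^{-1}$ maps $L^\infty(\Omega)$ into $\mathcal{D}(A)$. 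Hence $A$ is $m$-accretive in $L^\infty(\Omega)$ and generates a contraction semigroup on $\overline{\mathcal{D}(A)}^{L^\infty(\Omega)}$; Crandall--Liggett then produces, for $u_0 \in \overline{\mathcal{D}(A)}^{L^\infty(\Omega)}$ and $h \in L^\infty(0,T;L^\infty(\Omega))$, a unique mild solution $u \in C([0,T];\overline{\mathcal{D}(A)}^{L^\infty(\Omega)}) \subset C([0,T];C_0(\overline\Omega))$ obeying \eqref{estimation}. Third — and this is the step that requires care — I would identify this mild solution with the weak solution of Theorem \ref{3}: the time-semidiscretization used to build the weak solution in Theorem \ref{3} is, step by step, the implicit Euler scheme $\frac{u^{n}-u^{n-1}}{\tau} + A u^{n} = h^{n}$ whose piecewise-constant interpolant is precisely the Crandall--Liggett approximation, so passing to the limit $\tau \to 0$ in \emph{both} senses (weak-$\mathbf{V}(Q_T)$ limit on one side, uniform $L^\infty$ limit from the semigroup on the other) and invoking the uniqueness in Theorem \ref{3} forces the two limits to coincide. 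Finally, for (ii), I would use the standard a priori estimate for the Euler scheme: when $h \in W^{1,1}(0,T;L^\infty(\Omega))$ and $u_0 \in \mathcal{D}(A)$, accretivity gives $\|u^{n}-u^{n-1}\|_\infty/\tau \leq \|A u_0 - h(0)\|_\infty + \int_0^{t_n} \|h'\|_\infty$, uniformly in $\tau$; passing to the limit yields $u \in W^{1,\infty}(0,T;L^\infty(\Omega))$ and the bound \eqref{estimationbiss} (noting $A u_0 = -\Delta_p u_0 - u_0^{-\delta}$), and then from the equation $\Delta_p u + u^{-\delta} = u_t - h \in L^\infty(Q_T)$.

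The main obstacle I anticipate is the rigorous identification in step three, i.e. checking that the abstract mild solution is the same object as the weak solution of Theorem \ref{3}. The subtlety is that $\mathbf{V}(Q_T)$-regularity and $C([0,T];L^\infty)$-regularity are a priori different, and one must make sure the resolvents appearing in the Euler scheme for the abstract problem are literally the solutions $u_\lambda$ furnished by Theorem \ref{1} in the cone $\mathcal{C}$ — in particular that the scheme stays inside $\mathcal{D}(A)$ at every step, which is where the uniform cone bounds (Theorem \ref{1}, Theorem \ref{3}) are essential to keep the singular term $u^{-\delta}$ controlled in $L^\infty$ and hence the discrete energy estimates uniform. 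A secondary, more technical point is verifying that $\mathcal{D}(A)$ is nonempty and that $\overline{\mathcal{D}(A)}^{L^\infty(\Omega)}$ is large enough to be a natural hypothesis — this follows by exhibiting, via Theorem \ref{1} with, say, $g \equiv 0$ and a fixed $\lambda$, an explicit element of $\mathcal{D}(A)$, and then noting that the hypothesis $u_0 \in \overline{\mathcal{D}(A)}^{L^\infty(\Omega)}$ is automatically compatible with $u_0 \in W^{1,p}_0(\Omega)\cap\mathcal{C}$ assumed in Theorem \ref{3}.
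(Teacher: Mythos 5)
Your proposal follows essentially the same route as the paper: the paper proves that $A$ is m\--accretive in $L^\infty(\Omega)$ exactly as you describe (accretivity via the weak comparison/monotonicity inequalities, the range condition via Theorem~\ref{1}), then invokes the nonlinear semigroup theory of \cite{Ba} (Crandall--Liggett/B\'enilan type results), only re\--deriving the two\--parameter implicit\--Euler comparison estimates explicitly to be self\--contained; and since that Euler scheme is literally the scheme \eqref{scheme} from the proof of Theorem~\ref{3}, the identification of the mild solution with the weak solution is automatic, as you anticipated, while your discrete Lipschitz bound for (ii) is the same standard ingredient the paper obtains via the shifted contraction estimate. One small correction: $-v^{-\delta}$ is \emph{increasing} in $v$, not decreasing, and it is precisely this monotonicity of the zeroth\--order term that helps accretivity, so your conclusion stands even though the stated reason is off by a sign.
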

Concerning problem $({\rm P}_t)$, we have the following
\begin{theorem}\label{3bis}
 Let $0<\delta<2+\frac{1}{p-1}$. Assume that $f$ is a bounded below Caratheodory function, and that $f$ is locally Lipschitz with respect to the second variable uniformly in $x\in\Omega$ and satisfying \eqref{sublineargrowth}. Let $u_0\in W^{1,p}_0(\Omega)\cap {\mathcal C}$. Then, for any $T>0$, there exists a unique weak solution, $u$, to $({\rm P_t})$ 
such that $u(t)\in\, {\mathcal C}$ uniformly for $t\in\,[0,T]$, $u\in C([0,T],W^{1,p}_0(\Omega))$ and $u$ satisfies for any $t\in [0,T]$:
\begin{eqnarray}
\label{secondenergyeqbis}
\int_0^t\int_{\Omega}\big{(}\frac{\partial u}{\partial t}\big{)}^2{\rm d}x{\rm d}s&+&\frac{1}{p}\int_{\Omega}|\nabla u(t)|^p{\rm d}x-\frac{1}{1-\delta}\int_{\Omega}u^{1-\delta}(t){\rm d}x\\
=\int_{\Omega}F(x,u(t)){\rm d}x+\frac{1}{p}\int_{\Omega}|\nabla u_0|^p{\rm d}x&-&\frac{1}{1-\delta}\int_{\Omega}u_0^{1-\delta}{\rm d}x-\int_{\Omega}F(x,u_0){\rm d}x,\nonumber
\end{eqnarray}
where $F(x,w)\eqdef\int_0^wf(x,s){\rm d}s$. 
\end{theorem}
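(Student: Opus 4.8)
The plan is to deduce Theorem~\ref{3bis} from Theorem~\ref{3} by a fixed-point argument that absorbs the nonlinearity $f(x,u)$ into the nonhomogeneous term. Given $u_0\in W^{1,p}_0(\Omega)\cap\mathcal C$, one first fixes a pair of barriers $\underline u,\overline u\in\mathcal C$ (independent of $T$) which are sub- and supersolutions of the stationary problem with reaction $u^{-\delta}+f(x,u)$; the asymptotically sub-homogeneous condition \eqref{sublineargrowth} together with $\alpha_f<\lambda_1(\Omega)$ is exactly what is needed to construct $\overline u$ (take a suitable multiple of a solution of $-\Delta_p w=\lambda w^{p-1}$ with $\alpha_f<\lambda<\lambda_1$, corrected near $\partial\Omega$ to match the singular behaviour), and $\underline u$ is built from a small multiple of a torsion-type function. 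We may assume, enlarging the barriers if necessary, that $\underline u\le u_0\le\overline u$. Define the closed convex set $X_T\eqdef\{v\in L^\infty(Q_T):\underline u\le v(t,\cdot)\le\overline u\text{ for a.e.\ }t\}$. For $v\in X_T$ set $h_v(x,t)\eqdef f(x,v(x,t))$: since $f$ is bounded below, Caratheodory and $v$ is bounded above by $\overline u\in L^\infty$, we get $h_v\in L^\infty(Q_T)$, so Theorem~\ref{3} furnishes a unique weak solution $u=\Phi(v)\in\mathbf V(Q_T)\cap C([0,T],W^{1,p}_0(\Omega))$ to \eqref{linearheateq} with that right-hand side.

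The next step is to show $\Phi$ maps $X_T$ into itself: this is a weak comparison argument. Because $\underline u,\overline u$ were chosen as stationary sub/supersolutions for the \emph{full} reaction and because $s\mapsto f(x,s)$ is monotone-comparable on $[\underline u,\overline u]$ after adding a large multiple of the identity (local Lipschitz continuity uniformly in $x$ gives a one-sided bound $f(x,s_1)-f(x,s_2)\ge -L(s_1-s_2)$ for $s_1\ge s_2$ in the relevant range), one compares $\Phi(v)$ with $\overline u$ and with $\underline u$ using the parabolic weak comparison principle of {\sc Cuesta-Tak\'a\v c} and {\sc Fleckinger-Tak\'a\v c}, exactly as invoked in the proof of Theorem~\ref{3}; the singular term $u^{-\delta}$ is handled there via the cone $\mathcal C$. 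This yields $\underline u\le\Phi(v)(t)\le\overline u$, i.e.\ $\Phi(v)\in X_T$, and simultaneously that $\Phi(v)(t)\in\mathcal C$ uniformly in $t$.

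To obtain a fixed point I would use a contraction argument on a short time interval together with continuation. On $X_{T'}$ with $T'$ small, estimate $\Phi(v_1)-\Phi(v_2)$: writing $w=\Phi(v_1)-\Phi(v_2)$, it solves a parabolic equation whose right-hand side is $(\text{singular difference})+(f(x,v_1)-f(x,v_2))$; the singular difference $u_1^{-\delta}-u_2^{-\delta}$ has the favourable sign when tested against $w$ (the map $u\mapsto -u^{-\delta}$ is monotone), while the local Lipschitz bound gives $\|f(\cdot,v_1)-f(\cdot,v_2)\|_{L^2(Q_{T'})}\le L\|v_1-v_2\|_{L^2(Q_{T'})}$ since all functions live between the fixed $L^\infty$ barriers. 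Testing with $w$ and using $w(0)=0$ produces $\|w\|_{C([0,T'],L^2)}\le C\sqrt{T'}\,\|v_1-v_2\|_{C([0,T'],L^2)}$, a contraction for $T'$ small; Banach's theorem gives a unique solution on $[0,T']$, and since the barriers and hence the Lipschitz constant $L$ and the estimate are time-translation invariant, the same $T'$ works at every step, so one continues to cover $[0,T]$ for arbitrary $T$. Uniqueness on all of $[0,T]$ follows from the same energy/comparison estimate applied directly to two weak solutions of $({\rm P}_t)$ (here one also uses that any weak solution automatically lies in $\mathcal C$ uniformly, which one gets by comparison with the barriers). Finally, the regularity $u\in C([0,T],W^{1,p}_0(\Omega))$ and the energy identity \eqref{secondenergyeqbis} are inherited from Theorem~\ref{3}: apply \eqref{secondenergyeq} with $h=f(\cdot,u(\cdot))$, and rewrite $\int_0^t\!\int_\Omega f(x,u)\partial_t u$ as $\int_\Omega F(x,u(t))-\int_\Omega F(x,u_0)$ via the chain rule, which is justified because $u_t\in L^2(Q_T)$, $u\in L^\infty$, and $F(x,\cdot)$ is $C^1$ with locally Lipschitz derivative.

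The main obstacle I expect is the verification that $\Phi$ maps $X_T$ into itself, i.e.\ the construction of genuine sub- and supersolution barriers in $\mathcal C$ compatible with the singular term \emph{and} with the reaction $f$, and the careful invocation of the parabolic weak comparison principle in the presence of the singularity $u^{-\delta}$; once the solution is trapped between cone barriers the singular term is an $L^\infty(0,T;W^{-1,p'})$ perturbation and everything else is a routine reduction to Theorem~\ref{3}. A secondary subtlety is making the contraction estimate genuinely time-uniform, which requires that the a priori $L^\infty$ bounds (and therefore the effective Lipschitz constant of $f$ on the relevant range) do not deteriorate over successive time steps — this is guaranteed precisely because the barriers $\underline u,\overline u$ are stationary and independent of the time window.
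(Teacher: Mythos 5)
Your argument is essentially correct, but it follows a genuinely different route from the paper. The paper does not reduce $({\rm P}_t)$ to Theorem~\ref{3} by a fixed point: it goes back to the time\--semi\--discretization and runs the semi\--implicit scheme \eqref{iterativeequation}, $u^n-\Delta_t(\Delta_p u^n+(u^n)^{-\delta})=\Delta_t f(x,u^{n-1})+u^{n-1}$, using Theorem~\ref{1} at each step; the trapping $\underline{u}\le u^n\le\overline{u}$ (with the stationary barriers \eqref{subsol}--\eqref{supsol}) makes $h_{\Delta_t}=f(x,u_{\Delta_t}(\cdot-\Delta_t))$ bounded in $L^\infty(Q_T)$, the energy estimates of Theorem~\ref{3} carry over verbatim (\eqref{estimateagain}), and the Lipschitz bound on $f$ over $[\underline{u},\overline{u}]$ gives $h_{\Delta_t}\to f(x,u)$ in $L^\infty(0,T;L^2(\Omega))$, so the limit solves $({\rm P}_t)$ directly; uniqueness is then Gronwall plus monotonicity, and the energy identity is obtained as in Theorem~\ref{3}. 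Your scheme instead treats Theorem~\ref{3} as a black box for the frozen datum $h_v=f(x,v)$ and runs a Banach fixed point on the order interval $\{\underline{u}\le v\le\overline{u}\}$ in (say) $L^\infty(0,T';L^2(\Omega))$, with continuation in time; this is exactly the strategy the paper itself adopts in the non\--degenerate case $p=2$ (proof of Theorem~\ref{4}), so it is a legitimate alternative for general $p$. What the paper's route buys is that no short\--time/continuation bookkeeping is needed and the compactness estimates come out of the same discrete machinery; what your route buys is modularity, since the singular term is dealt with once and for all in Theorem~\ref{3}.

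Two caveats you should repair when writing this up. First, your sketch of the supersolution is wrong as stated: a nontrivial positive solution of $-\Delta_p w=\lambda w^{p-1}$, $w=0$ on $\partial\Omega$, with $\lambda<\lambda_1(\Omega)$ does not exist; use the barriers \eqref{subsol}--\eqref{supsol} of the paper (for $\delta<1$, a large multiple of the solution $V$ of $-\Delta_p V=\ell V^{p-1}+V^{-\delta}$ with $\alpha_f<\ell<\lambda_1$; for $\delta\ge 1$, the $\phi_1$\--based profiles), which satisfy $-\Delta_p\overline{u}-\overline{u}^{-\delta}\ge \ell\,\overline{u}^{p-1}+L$ and $-\Delta_p\underline{u}-\underline{u}^{-\delta}\le -L$ with $-L\le f(x,s)\le \ell s^{p-1}+L$. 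Second, the invariance of your set $X_T$ under $\Phi$ is not a consequence of a one\--sided Lipschitz normalization of $f$: since $f$ is not assumed monotone, you need the supersolution to dominate $f(x,v)$ for \emph{every} $v$ in the interval, and this is exactly what the construction above gives, because $f(x,v)\le \ell v^{p-1}+L\le \ell\,\overline{u}^{p-1}+L$ whenever $v\le\overline{u}$; with that fixed, the parabolic weak comparison argument, the contraction estimate, and the derivation of \eqref{secondenergyeqbis} from \eqref{secondenergyeq} via the chain rule for $F(x,\cdot)$ (licit since $u_t\in L^2(Q_T)$ and $u$ is trapped in $L^\infty$) go through as you describe.
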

A straightforward application of Proposition~\ref{regsobolev} yields the following
\begin{proposition}
\label{regsobolevbis}
Assume that conditions in Theorem~\ref{3bis} are satisfied. If in addition $u_0\in \overline{\mathcal{D}(A)}^{L^\infty(\Omega)}$, then the solution $u$ to $({\rm P_t})$  belongs to $C([0,T];C_0(\overline{\Omega}))$ and 
\begin{enumerate}
\item[(i)] there exists $\omega>0$ such that if $v$ is another weak solution to $(P_t)$ with initial datum $v_0\in \overline{\mathcal{D}(A)}^{L^\infty(\Omega)}$ then the following estimate holds:
$$
\|u(t)-v(t)\|_{L^\infty(\Omega)}\leq e^{\omega t} \|u_0-v_0\|_{L^\infty(\Omega)},\quad 0\leq t\leq T.
$$
\item[(ii)]  if $u_0\in {\mathcal D}(A)$ then $u\in W^{1,\infty}(0,T;L^\infty(\Omega))$ and $\Delta_p u+u^{-\delta}\in L^\infty(Q_T)$, and the following estimate holds:
$$
\left \|\frac{{\rm d} u(t)}{{\rm d}t}\right \|_{L^\infty(\Omega)}\leq e^{\omega t} \|\Delta_p u_0+u_0^{-\delta}+f(x,u_0)\|_{L^\infty(\Omega)}
$$
\end{enumerate}
\end{proposition}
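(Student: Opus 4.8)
The plan is to read $({\rm P}_t)$ as the instance of $({\rm S}_t)$ whose source is the (a priori unknown) term $h(x,t):=f(x,u(t,x))$, and to apply Proposition~\ref{regsobolev}. By Theorem~\ref{3bis} the weak solution $u$ satisfies $u(t)\in\mathcal{C}$ uniformly in $t\in[0,T]$, so $0<u\le R$ a.e.\ in $Q_T$ for some $R>0$, and (exactly as in the reduction of Theorem~\ref{3bis} to Theorem~\ref{3}) $h:=f(\cdot,u)\in L^\infty(Q_T)$; thus $u$ is precisely the weak solution of \eqref{linearheateq} for this $h$. Since moreover $u_0\in\overline{\mathcal{D}(A)}^{L^\infty(\Omega)}$, the hypotheses of Proposition~\ref{regsobolev} hold, which already gives $u\in C([0,T];C_0(\overline{\Omega}))$ — the first assertion. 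I will write $L\ge 0$ for a Lipschitz constant of $f$ with respect to its second argument on $\Omega\times[0,R']$, where $R'$ bounds the $L^\infty(Q_T)$-norms of all solutions occurring below (a quantity controlled by the $L^\infty$-norms of their initial data).

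\emph{Part (i).} If $v$ is another weak solution, with datum $v_0\in\overline{\mathcal{D}(A)}^{L^\infty(\Omega)}$, then $v$ is the weak solution of \eqref{linearheateq} with source $k:=f(\cdot,v)\in L^\infty(Q_T)$, so estimate \eqref{estimation} of Proposition~\ref{regsobolev}(i) applies and reads
\[
\|u(t)-v(t)\|_{L^\infty(\Omega)}\le\|u_0-v_0\|_{L^\infty(\Omega)}+\int_0^t\|f(\cdot,u(s))-f(\cdot,v(s))\|_{L^\infty(\Omega)}\,{\rm d}s,\qquad 0\le t\le T.
\]
Bounding $\|f(\cdot,u(s))-f(\cdot,v(s))\|_{L^\infty(\Omega)}\le L\,\|u(s)-v(s)\|_{L^\infty(\Omega)}$ and invoking Gronwall's lemma gives $\|u(t)-v(t)\|_{L^\infty(\Omega)}\le e^{Lt}\|u_0-v_0\|_{L^\infty(\Omega)}$, i.e.\ (i) with $\omega:=L$.

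\emph{Part (ii).} Now $u_0\in\mathcal{D}(A)$; set $C_0:=\|\Delta_p u_0+u_0^{-\delta}+f(x,u_0)\|_{L^\infty(\Omega)}<\infty$. The obstruction is that applying Proposition~\ref{regsobolev}(ii) to $u$ presupposes $h=f(\cdot,u)\in W^{1,1}(0,T;L^\infty(\Omega))$, hence $u\in W^{1,\infty}(0,T;L^\infty(\Omega))$, which is what we want. I would break this circularity by a successive-approximation scheme: set $u^{(0)}\equiv u_0$ and let $u^{(n+1)}$ be the weak solution of \eqref{linearheateq} with datum $u_0$ and source $h^{(n)}:=f(\cdot,u^{(n)})$. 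By Theorem~\ref{3} each $u^{(n+1)}$ exists, lies in $\mathcal{C}$ uniformly in $t$, and a routine argument (contraction on short time intervals via \eqref{estimation}, or a comparison/barrier argument) keeps the $L^\infty(Q_T)$-norms of the $u^{(n)}$ bounded uniformly in $n$ — legitimising $L$ — and shows $u^{(n)}\to u$ in $C([0,T];L^\infty(\Omega))$, the limit solving $({\rm P}_t)$ and hence equal to $u$ by uniqueness. An induction on $n$ then shows $u^{(n)}\in W^{1,\infty}(0,T;L^\infty(\Omega))$ with $\|\partial_t u^{(n)}(t)\|_{L^\infty(\Omega)}\le C_0\sum_{k=0}^{n-1}\frac{(Lt)^k}{k!}$: indeed $u^{(0)}$ is constant in time, and if $u^{(n)}$ is Lipschitz in time then so is $h^{(n)}$, with $\|\partial_t h^{(n)}(\tau)\|_{L^\infty(\Omega)}\le L\|\partial_t u^{(n)}(\tau)\|_{L^\infty(\Omega)}$, so $h^{(n)}\in W^{1,1}(0,T;L^\infty(\Omega))$; since $u_0\in\mathcal{D}(A)$ and $h^{(n)}(0)=f(\cdot,u_0)$, Proposition~\ref{regsobolev}(ii) applied to $u^{(n+1)}$ on the interval $[0,t]$ gives $\|\partial_t u^{(n+1)}(t)\|_{L^\infty(\Omega)}\le C_0+L\int_0^t\|\partial_t u^{(n)}(\tau)\|_{L^\infty(\Omega)}\,{\rm d}\tau$, which closes the induction. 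Letting $n\to\infty$ yields $u\in W^{1,\infty}(0,T;L^\infty(\Omega))$ with $\|\partial_t u(t)\|_{L^\infty(\Omega)}\le C_0 e^{Lt}$, and from the equation $\Delta_p u+u^{-\delta}=\partial_t u-f(\cdot,u)\in L^\infty(Q_T)$; this is (ii), again with $\omega=L$.

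\emph{Main difficulty.} The genuinely non-routine ingredient is the bootstrap in (ii): Proposition~\ref{regsobolev}(ii) is only applicable once the $W^{1,1}$-in-time regularity of $f(\cdot,u)$ is known, so it must be manufactured by the iteration, where one has to keep the $L^\infty$-bounds — hence the Lipschitz constant of $f$ and the very applicability of Theorem~\ref{3} — uniform along the scheme. Equivalently, one may recast the whole argument in the language of nonlinear semigroups: the operator $Bv:=Av-f(\cdot,v)$ on $L^\infty(\Omega)$ is a Lipschitz, hence $\omega$-accretive, perturbation of the $m$-accretive operator $A$ of \eqref{def-domaine-op} (whose $m$-accretivity comes from Theorem~\ref{1} together with \eqref{estimation}), the weak solution of $({\rm P}_t)$ is the integral solution of $u'+Bu=0$, and (i)--(ii) are then the textbook contraction and smoothing estimates for such solutions, with $Bu_0=-\Delta_p u_0-u_0^{-\delta}-f(\cdot,u_0)$.
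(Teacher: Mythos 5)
Your reduction of $({\rm P}_t)$ to $({\rm S}_t)$ with $h=f(\cdot,u)$ and your part (i) (estimate \eqref{estimation} plus the Lipschitz bound on $f$ and Gronwall) are exactly the paper's argument. For part (ii), however, you take a genuinely different route. The paper never touches the $W^{1,1}$-in-time hypothesis of Proposition~\ref{regsobolev}(ii): it compares $u(t)$ with the time translate $u(\cdot+t-s)$, which is the solution issued from $u(t-s)$, so that assertion (i) gives $\|u(t)-u(s)\|_{L^\infty}\leq e^{\omega s}\|u_0-u(t-s)\|_{L^\infty}$; then it estimates $\|u_0-u(t-s)\|_{L^\infty}$ by applying \eqref{estimation} with $v\equiv u_0$ (the constant-in-time solution of \eqref{linearheateq} with source $k=Au_0$), which yields $\|u_0-u(t-s)\|_{L^\infty}\leq (t-s)\|Au_0-f(x,u_0)\|_{L^\infty}+\omega\int_0^{t-s}\|u_0-u(\tau)\|_{L^\infty}{\rm d}\tau$, and concludes by Gronwall that $u$ is Lipschitz in time with constant $e^{\omega t}\|\Delta_p u_0+u_0^{-\delta}+f(x,u_0)\|_{L^\infty}$; the identification $\partial_t u\in L^\infty(Q_T)$ then uses $\partial_t u\in L^2(Q_T)$ exactly as at the end of the proof of Proposition~\ref{regsobolev}. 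Your Picard iteration achieves the same bound and is workable: the uniform barriers $\underline{u}\leq u^{(n)}\leq\overline{u}$ propagate by the weak comparison principle exactly as in the proof of Theorem~\ref{3bis}, and the convergence $u^{(n)}\to u$ is in fact immediate (no short-time contraction or uniqueness argument needed) by applying \eqref{estimation} to $u$ and $u^{(n+1)}$, which gives $\|u(t)-u^{(n+1)}(t)\|_{L^\infty}\leq L\int_0^t\|u(s)-u^{(n)}(s)\|_{L^\infty}{\rm d}s$ and hence factorial decay. The one point you should make explicit is the hypothesis you invoke at each step: Lipschitz continuity of $t\mapsto h^{(n)}(t)=f(\cdot,u^{(n)}(t))$ with values in $L^\infty(\Omega)$ does not by itself give $h^{(n)}\in W^{1,1}(0,T;L^\infty(\Omega))$ in the Bochner sense ($L^\infty(\Omega)$ lacks the Radon--Nikodym property), which is the form in which Proposition~\ref{regsobolev}(ii) is stated and used (via $h(\tau)-h(\tau+t-s)=\int_\tau^{\tau+t-s}\frac{{\rm d}h}{{\rm d}t}{\rm d}\sigma$). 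This is repairable — $\partial_t u^{(n)}\in L^\infty(Q_T)$ gives $\partial_t h^{(n)}\in L^\infty(Q_T)$ pointwise a.e.\ and the inequality $\|h^{(n)}(\tau)-h^{(n)}(\tau+a)\|_{L^\infty}\leq\int_\tau^{\tau+a}\|\partial_t h^{(n)}(\sigma)\|_{L^\infty}{\rm d}\sigma$ is all the proof of \eqref{mainestimate} actually needs — but it is an extra technical layer that the paper's translation-plus-Gronwall argument avoids entirely, at the price of only using the contraction estimate \eqref{estimation}. Your closing semigroup remark ($B=A-f(\cdot,\cdot)$ as an $\omega$-accretive perturbation of the $m$-accretive $A$) is a correct alternative framing consistent with the paper's use of \cite{Ba}.
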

\begin{remark}
The constant $\omega$ given above is equal to the Lipschitz constant of $f(x,\cdot)$ on $[\underline{u}, \overline{u}]$ where $\underline{u}$ and $\overline{u}$ are respectively subsolution and supersolution to $(Q)$ given in \eqref{subsol} and \eqref{supsol} below.
\end{remark}
From Theorems~\ref{3bis} and \ref{2bis}, we can show the following asymptotic behaviour for solutions to $({\rm P}_t)$:
\begin{theorem}
\label{3ter}
Let hypothesis in Theorem~\ref{3bis} satisfied and assume that $\frac{f(x,s)}{s^{p-1}}$ is decreasing in $(0,\infty)$ for a.e. $x\in \Omega$. Then, the solution to $({\rm P}_t)$ is defined in $(0,\infty)\times \Omega$ and satisfies
\begin{equation}
\label{convasympt}
u(t)\to u_\infty\quad\mbox{in}\;\;L^\infty(\Omega)\quad\mbox{as}\;\; t\to\infty,
\end{equation}
where $u_\infty$ is defined in Theorem~\ref{2bis}.
\end{theorem}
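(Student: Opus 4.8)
The plan is to trap the global solution between two monotone ``barrier flows'' issued from a stationary subsolution and a stationary supersolution of $({\rm Q})$, and to identify their common limit with $u_\infty$ by means of the uniqueness statement of Theorem~\ref{2bis}. The accretivity of $A$ in $L^\infty(\Omega)$ from Proposition~\ref{regsobolev} — equivalently, the weak comparison principle for $({\rm P}_t)$ — is exactly what makes this mechanism work.

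First I would set up the barriers and globality. By the Remark following Theorem~\ref{3bis}, on each interval $[0,T]$ the solution stays between functions $\underline u,\overline u\in{\mathcal C}$; these are the barriers of \eqref{subsol}, \eqref{supsol}, they are in fact time-independent sub- and supersolutions of $({\rm P}_t)$, they can be chosen in $C_0(\overline\Omega)\cap\overline{{\mathcal D}(A)}^{L^\infty(\Omega)}$, and they carry free multiplicative constants that we tune — which preserves the sub/supersolution property since, after scaling, the singular term dominates near $\partial\Omega$ while \eqref{sublineargrowth} controls the growth of $f$ — so that $\underline u\le u_0\le\overline u$ in $\Omega$; this is possible precisely because $u_0\in{\mathcal C}$. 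Since $\underline u,\overline u$ do not depend on $T$, the weak comparison principle gives $\underline u\le u(t)\le\overline u$ a.e. in $\Omega$ on the whole existence interval, and because Theorem~\ref{3bis} produces a solution on every $[0,T]$ and these are consistent by uniqueness, $u$ is defined on $(0,\infty)$ with $u(t)\in{\mathcal C}$ uniformly for $t\ge0$.

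Next I would study the flows $\underline v,\overline v$ of $({\rm P}_t)$ (same $f$) with initial data $\underline u$ and $\overline u$. Comparison with the stationary supersolution gives $\overline v(t)\le\overline u$ for $t\ge0$; applying comparison to $t\mapsto\overline v(t+s)$ and $t\mapsto\overline v(t)$ — the equation is autonomous — then forces $t\mapsto\overline v(t)$ to be non-increasing, and symmetrically $t\mapsto\underline v(t)$ is non-decreasing, both remaining in the order interval $[\underline u,\overline u]$. Hence $\overline v(t)\downarrow w^{+}$ and $\underline v(t)\uparrow w^{-}$ pointwise a.e. as $t\to\infty$, with $w^{\pm}\in{\mathcal C}$. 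From the energy identity \eqref{secondenergyeqbis} together with the uniform $L^\infty$ and cone bounds, the right-hand side is bounded uniformly in $t$, so $\int_0^\infty\!\!\int_\Omega(\partial_t\overline v)^2<\infty$ and $\sup_{t\ge0}\|\nabla\overline v(t)\|_{L^p(\Omega)}<\infty$; therefore along a suitable sequence $t_n\to\infty$ one has $\partial_t\overline v(t_n)\to0$ in $L^2(\Omega)$ and $\overline v(t_n)\rightharpoonup w^{+}$ in $W^{1,p}_0(\Omega)$. Passing to the limit in the weak formulation — the singular term handled by dominated convergence using $\overline v(t_n)\ge\underline u$, the $p$-Laplacian term by the Minty–Browder monotonicity argument — shows $w^{+}$ is a solution of $({\rm Q})$ in $W^{1,p}_0(\Omega)\cap{\mathcal C}\cap C_0(\overline\Omega)$, whence $w^{+}=u_\infty$ by Theorem~\ref{2bis}; likewise $w^{-}=u_\infty$.

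Finally, comparison of genuine solutions gives $\underline v(t)\le u(t)\le\overline v(t)$ a.e. in $\Omega$ for every $t\ge0$, so it remains to upgrade $\overline v(t),\underline v(t)\to u_\infty$ to convergence in $L^\infty(\Omega)$. Since $\underline u,\overline u\in\overline{{\mathcal D}(A)}^{L^\infty(\Omega)}$, Proposition~\ref{regsobolev} gives $\overline v,\underline v\in C([0,T];C_0(\overline\Omega))$ for every $T$, so $\{\overline v(t)-u_\infty\}_{t\ge0}$ is a non-increasing family in $C_0(\overline\Omega)$ decreasing pointwise to $0$ on the compact set $\overline\Omega$; by Dini's theorem $\|\overline v(t)-u_\infty\|_{L^\infty(\Omega)}\to0$, and symmetrically $\|\underline v(t)-u_\infty\|_{L^\infty(\Omega)}\to0$. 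Then, pointwise, $|u(t)-u_\infty|\le(\overline v(t)-u_\infty)+(u_\infty-\underline v(t))$, so $\|u(t)-u_\infty\|_{L^\infty(\Omega)}\le\|\overline v(t)-u_\infty\|_{L^\infty(\Omega)}+\|\underline v(t)-u_\infty\|_{L^\infty(\Omega)}\to0$, which is \eqref{convasympt}. The delicate point is the identification in the previous paragraph — showing the monotone limits $w^{\pm}$ are bona fide $W^{1,p}_0(\Omega)\cap{\mathcal C}$ solutions of $({\rm Q})$, i.e. passing to the limit in the singular and quasilinear terms along $t_n\to\infty$ while keeping the solution both away from $0$ and in $W^{1,p}_0(\Omega)$ — after which the decrease of $s\mapsto f(x,s)/s^{p-1}$, through the uniqueness in Theorem~\ref{2bis}, collapses both barriers onto the single equilibrium $u_\infty$.
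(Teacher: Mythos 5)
Your proposal is correct and shares the paper's overall skeleton: trap $u(t)$ between the two flows issued from the stationary subsolution $\underline u$ and supersolution $\overline u$ of \eqref{subsol}--\eqref{supsol}, show these flows are monotone in time, identify their limits with $u_\infty$ via the uniqueness of Theorem~\ref{2bis}, and conclude with Dini plus squeezing. Where you genuinely diverge is in the execution of the two key steps. For monotonicity in time, the paper never invokes a parabolic comparison principle between two solutions of $({\rm P}_t)$; instead it observes that the discrete iterates $\underline u^n$, $\overline u^n$ of \eqref{iterativeequation} are respectively nondecreasing and nonincreasing for $\Delta_t<1/K$ (only the already-proved elliptic weak comparison principle is used), and this monotonicity survives the limit $\Delta_t\to 0$. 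Your autonomy-plus-comparison argument works too, but the comparison you invoke ($\overline v(t+s)$ versus $\overline v(t)$, and solution versus stationary super/subsolution) is not stated anywhere in the paper and needs its one-line justification: test the difference with its positive part, use that $s\mapsto s^{-\delta}$ is decreasing and $f(x,\cdot)$ is Lipschitz on $[\underline u,\overline u]$, and apply Gronwall. For the identification of the limits, the paper argues through the semigroup: $u_i^\infty=\lim_{t'\to\infty}S(t'+t)\underline u=S(t)u_i^\infty$, so the monotone limits are stationary solutions, hence equal $u_\infty$; your route is the classical $\omega$-limit argument (global bound on $\int_0^\infty\!\!\int_\Omega(\partial_t\overline v)^2$ from \eqref{secondenergyeqbis}, a sequence $t_n$ with vanishing time derivative, Minty--Browder for $-\Delta_p$, Hardy/dominated convergence for the singular term). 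This is heavier but self-contained and avoids the continuity-of-$S(t)$ issue implicit in interchanging $S(t)$ with the limit. Two small points to tighten: you use $\underline u,\overline u\in\overline{\mathcal D(A)}^{L^\infty(\Omega)}$ without proof (the paper gives the truncation-plus-Dini argument, which you should include since Proposition~\ref{regsobolevbis} is what delivers $\overline v,\underline v\in C([0,T];C_0(\overline\Omega))$ for your Dini step); and invoking Theorem~\ref{2bis} for $w^\pm$ formally requires $w^\pm\in C_0(\overline\Omega)$ — either note that the squeeze $\underline u\le w^\pm\le\overline u$ plus interior elliptic regularity gives this, or appeal directly to Theorem~\ref{WCP}, which is the actual uniqueness engine and requires no continuity.
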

Concerning the non degenerate case, i.e $p=2$, we can give additional results. In particular, we prove the existence of solutions in the sense of distributions for any $0<\delta$. Precisely,
\begin{theorem}\label{4}
Let $0<\delta$ and $p=2$. Let $f$ satisfy assumptions in Theorem~\ref{3bis} and $u_0\in {\mathcal C}$. Then, for any $T>0$, there exists a unique solution $u\in C([0,T], L^2(\Omega))\cap L^\infty(Q_T)$ to $({\rm P}_t)$ in the sense of distributions, that is $u\in {\mathcal C}$ uniformly in $t\in [0,T]$ and for any $\phi\in {\mathcal D}(Q_T)$, we have
\begin{equation}
\label{solutionp=2}
-\int_{Q_T}u\frac{\partial\phi}{\partial t}\,{\rm d}x{\rm d}t-\int_{Q_T}u\Delta\phi\,{\rm d}x{\rm d}t=\int_{Q_T}\left(\frac{1}{u^\delta}+ f(x,u)\right)\phi\,{\rm d}x{\rm d}t.
\end{equation}
In addition, we have for $0<\eta$ small enough, the following regularity property:
\begin{itemize}
\item[(i)] if $\delta<\frac{1}{2}$ and $u_0\in {\mathcal C}\cap H^{2-\eta}(\Omega)$, then $u\in C\big{(}[0,T];\,H^{2-\eta}(\Omega)\big{)}$;
\item[(ii)] if $\frac{1}{2}\leq \delta<1$ and $u_0\in {\mathcal C}\cap H^{\frac{5}{2}-\delta-\eta}(\Omega)$, then $u\in C\big{(}[0,T],H^{\frac{5}{2}-\delta-\eta}(\Omega)\big{)}$;
\item[(iii)] if $1\leq \delta$ and $u_0\in {\mathcal C}\cap H^{\frac{1}{2}+\frac{2}{\delta+1}-\eta}(\Omega)$, then $u\in C\big{(}[0,T],H^{\frac{1}{2}+\frac{2}{\delta+1}-\eta}(\Omega)\big{)}$.
\end{itemize}
Moreover, 
\begin{enumerate}
\item[(iv)] if $u_1$, $u_2$ are solutions corresponding to initial data $u_{1,0}\in  \mathcal{C}$, $u_{2,0}\in  \mathcal{C}$ respectively, then there exist $\underline{u}$,  $\overline{u}$ in $\mathcal{C}$ and a positive constant $\omega$ (proportional to the lipschitz constant of $f(x,\cdot)$ in $[0,\|\overline{u}\|_{\infty}]$) such that
\begin{equation}
\| u_1(t)-u_2(t)\|_{L^2(\Omega)}\leq e^{(\omega-\lambda_1) t} \| u_{1,0}-u_{2,0}\|_{L^2(\Omega)}\; \mbox{ and }\; \underline{u}\leq u^i\leq  \overline{u}, \; i=1,2.\label{eqtomega}
\end{equation}
\item[(v)] if $f(x,\cdot)$ is a nonincreasing function then \eqref{eqtomega} is true with $\omega=0$. Then, the solution to $({\rm P}_t)$, $u$, defined in $(0,\infty)$ satisfies $u(t)\to u_\infty$ as $t\to +\infty$ in $L^2(\Omega)$ where $u_\infty$ is the solution given in Theorem~\ref{2bis}.
\end{enumerate}
\end{theorem}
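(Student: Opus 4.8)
\emph{Sketch of proof.} The plan is to exploit that, for $p=2$, $-\Delta$ with domain $H^2(\Omega)\cap H^1_0(\Omega)$ generates an analytic semigroup $(e^{t\Delta})_{t\ge0}$ on $L^2(\Omega)$, and to realise $({\rm P}_t)$ as the Cauchy problem for the operator $\mathcal A v=-\Delta v-v^{-\delta}-f(\cdot,v)$, the singular term being controlled through the cone $\mathcal C$. First, as in the proof of Theorem~\ref{2bis} and using~\eqref{sublineargrowth} (which, since $\alpha_f<\lambda_1(\Omega)$, makes the auxiliary linear problems solvable), I would build an ordered pair $\underline u\le\overline u$ in $\mathcal C$ of sub\-- and supersolutions of $({\rm Q})$ with $\underline u\le u_0\le\overline u$, and set $M=\|\overline u\|_{L^\infty(\Omega)}$. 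Membership in $\mathcal C$ gives the crucial bound $\underline u^{-\delta}\le C\,d(x)^{-\mu}$, with $\mu=\delta$ if $\delta\le1$ and $\mu=\tfrac{2\delta}{\delta+1}$ if $\delta\ge1$; since $\mu<2$, Hardy's inequality yields $d^{-\mu}\in H^{-\sigma}(\Omega)$ for every $\sigma>\max(0,\mu-\tfrac12)$.

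\emph{Existence.} On the order interval $[\underline u,\overline u]$ the operator $\mathcal A$ is $\omega$\--accretive in $L^2(\Omega)$ — because $-\Delta$ is accretive, $v\mapsto-v^{-\delta}$ is accretive by the monotonicity of $s\mapsto s^{-\delta}$, and $v\mapsto-f(\cdot,v)$ is $\omega$\--accretive with $\omega=\mathrm{Lip}\big(f(x,\cdot)|_{[0,M]}\big)$ — and its resolvent equation $v-\lambda\big(\Delta v+v^{-\delta}\big)=g+\lambda f(\cdot,v)$ is solved, with solution in $\mathcal C\cap[\underline u,\overline u]$ (using that $\underline u,\overline u$ are sub/supersolutions of it, built as in Theorem~\ref{2bis}), by Theorem~\ref{1} for $\delta<3$ and by Theorem~\ref{2} for $\delta\ge3$. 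The Crandall--Liggett / nonlinear semigroup theory (see~\cite{Ba}) then produces a mild solution $u\in C([0,T];L^2(\Omega))$ with $\underline u\le u(t)\le\overline u$ for all $t$; in particular $u^{-\delta}\le\underline u^{-\delta}\le C\,d^{-\mu}$ and $\essinf_K u\ge c_K>0$ on every compact $K\subset Q_T$. Interior parabolic regularity then identifies $u$ as a solution of~\eqref{solutionp=2}, and the above bounds show that $u\in\mathcal C$ uniformly in $t$ and that $u$ obeys the Duhamel identity $u(t)=e^{t\Delta}u_0+\int_0^t e^{(t-s)\Delta}\big(u(s)^{-\delta}+f(\cdot,u(s))\big)\,\mathrm ds$, with source in $L^\infty(0,T;H^{-\sigma}(\Omega))$.

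\emph{Uniqueness, estimate~(iv), stabilisation~(v).} Let $u_1,u_2$ be two solutions with data $u_{1,0},u_{2,0}\in\mathcal C$ and put $w=u_1-u_2$; testing the difference of the two equations by $w$ (justified by carrying out the computation on the approximating resolvents and passing to the limit) and using $\int_\Omega|\nabla w|^2\ge\lambda_1\int_\Omega w^2$, the monotonicity of $s\mapsto s^{-\delta}$ (so $\int_\Omega(u_1^{-\delta}-u_2^{-\delta})w\le0$), and $(f(x,u_1)-f(x,u_2))w\le\omega w^2$ on $[0,M]$, one gets $\tfrac12\tfrac{\mathrm d}{\mathrm dt}\|w(t)\|_{L^2(\Omega)}^2\le(\omega-\lambda_1)\|w(t)\|_{L^2(\Omega)}^2$, whence~\eqref{eqtomega} by Gr\"onwall; the order bounds $\underline u\le u^i\le\overline u$ come from the construction. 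Taking $u_1=u_2$ gives uniqueness. If $f(x,\cdot)$ is non\--increasing, $\omega=0$ works, the solution is global on $(0,\infty)$, and applying the estimate with $u_1=u(t)$, $u_2\equiv u_\infty$ — admissible since, by Theorem~\ref{2bis} (whose hypothesis that $s\mapsto f(x,s)/s$ be decreasing follows from $f(x,\cdot)$ non\--increasing together with~\eqref{sublineargrowth}), $u_\infty\in\mathcal C$ is the stationary solution, hence a solution of $({\rm P}_t)$ with datum $u_\infty$ — gives $\|u(t)-u_\infty\|_{L^2(\Omega)}\le e^{-\lambda_1 t}\|u_0-u_\infty\|_{L^2(\Omega)}\to0$, which is~(v).

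\emph{Regularity (i)--(iii), and the main obstacle.} From the Duhamel identity and $u^{-\delta}+f(\cdot,u)\in L^\infty(0,T;H^{-\sigma}(\Omega))$, the analytic smoothing $\|(-\Delta)^\gamma e^{s\Delta}w\|_{L^2}\le C\,s^{-\gamma-\sigma/2}\|w\|_{H^{-\sigma}}$ (valid for $\gamma+\tfrac\sigma2<1$) gives $\int_0^t e^{(t-s)\Delta}(\cdots)\,\mathrm ds\in C([0,T];\mathcal D((-\Delta)^\gamma))$ for every $\gamma<1-\tfrac\sigma2$, i.e.\ in $C([0,T];H^{2-\sigma-\eta}(\Omega))$; taking $\sigma$ just above $\max(0,\mu-\tfrac12)$ and inserting the value of $\mu$ produces, for $\eta>0$ small, the exponents $2-\eta$ if $\delta<\tfrac12$, $\tfrac52-\delta-\eta$ if $\tfrac12\le\delta<1$, and $\tfrac12+\tfrac2{\delta+1}-\eta$ if $\delta\ge1$. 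Since the hypothesis $u_0\in\mathcal C\cap H^{\vartheta}(\Omega)$ makes $t\mapsto e^{t\Delta}u_0$ continuous into $H^{\vartheta}(\Omega)$ with the same exponent $\vartheta$, the claims~(i)--(iii) follow. The real difficulty, here as throughout the paper, is the singular term: for $\delta\ge1$ it fails to be in $L^1(\Omega)$, so no $H^1_0$\--weak formulation is available and one must settle for distributional solutions; everything hinges on (a) the a priori cone bounds — and on the compatibility of the barriers with the resolvent equation, the lack of a uniform Lipschitz control of $s\mapsto s^{-\delta}$ near $\partial\Omega$ being exactly what makes that step delicate — and (b) the sharp Hardy matching $\sigma=\mu-\tfrac12$ between the boundary blow\--up of $u^{-\delta}$ and its negative Sobolev order, which also pins down the regularity exponents in (i)--(iii).
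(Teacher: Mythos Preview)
Your regularity argument for (i)--(iii) is essentially the paper's: Duhamel representation, Hardy's inequality to place $u^{-\delta}$ in a negative Sobolev space $({\mathcal D}((-\Delta)^\theta))'$, and maximal regularity/analytic smoothing of the heat semigroup. No objection there.

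The existence/uniqueness part, however, has a genuine gap in the range $\delta\ge 3$ --- precisely the range that Theorem~\ref{4} is meant to cover beyond Theorem~\ref{3bis}. You invoke Crandall--Liggett in $L^2(\Omega)$, arguing that $\mathcal A$ is $\omega$-accretive because $-\Delta$, $v\mapsto -v^{-\delta}$ and $v\mapsto -f(\cdot,v)$ are separately accretive, and that the resolvent equation is solved by Theorem~\ref{2} when $\delta\ge 3$. But Theorem~\ref{2} only produces solutions in $H^1_{\rm loc}(\Omega)\cap\mathcal C$, \emph{not} in $H^1_0(\Omega)$ (indeed it asserts $u_\lambda\notin H^1_0$), and does not claim uniqueness. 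Consequently the pairing $\langle -\Delta u+\Delta v,\,u-v\rangle=\int_\Omega|\nabla(u-v)|^2$ that underlies your accretivity claim cannot be written: the integration by parts fails because $\nabla(u-v)\notin L^2(\Omega)$. The same obstruction hits your proof of (iv): testing the difference equation by $w=u_1-u_2$ requires $w$ to be an admissible test function with $\nabla w\in L^2(\Omega)$, and ``carrying out the computation on the approximating resolvents'' does not help, since for $\delta\ge 3$ those resolvents are themselves only $H^1_{\rm loc}$.

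The paper avoids this by an $\varepsilon$-regularisation that you do not use: one replaces $u^{-\delta}$ by $(u+\varepsilon)^{-\delta}$, which is bounded, so that for every $\delta>0$ the approximate problem $({\rm P}_{\varepsilon,t})$ has genuine $L^2(0,T;H^1_0(\Omega))$ solutions $u_\varepsilon$. Comparison then shows both $\varepsilon\mapsto u_\varepsilon$ and $\varepsilon\mapsto u_\varepsilon+\varepsilon$ are monotone, so $(u_\varepsilon)$ is Cauchy in $L^\infty(Q_T)$ and one passes to the limit in the distributional formulation. The $L^2$ estimate~\eqref{eqtomega} is obtained by testing $u^1_\varepsilon-u^2_\varepsilon$ against itself \emph{at the regularised level} (where it is legitimate) and then letting $\varepsilon\to 0^+$. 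For uniqueness of the auxiliary problem with frozen right-hand side $g$, the paper does \emph{not} test by $u-v$ but by $(-\Delta)^{-1}(u-v)\in H^2\cap H^1_0$, working in the $H^{-1}$ norm; this is the step that survives the loss of $H^1_0$ regularity.

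A minor point on (v): your claim that ``$f(x,\cdot)$ non-increasing together with~\eqref{sublineargrowth} implies $s\mapsto f(x,s)/s$ decreasing'' is false in general (take $f\equiv -1$: then $f(s)/s=-1/s$ is increasing on $(0,\infty)$). The statement of (v) simply \emph{invokes} the $u_\infty$ of Theorem~\ref{2bis}, implicitly assuming its hypotheses; you should not try to derive them from non-increase of $f$ alone.
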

\begin{remark}
In particular, if $\delta<3$ and $u_0\in H^1_0(\Omega)$, then we recover $u\in C([0,T];\,H^1_0(\Omega))$. Note also that for arbitrary $\delta>0$ there is $\epsilon>0$ such that $u\in C([0,T];\,H^{\frac{1}{2}+\epsilon}_0(\Omega))$ if $u_0\in H^{\frac{1}{2}+\epsilon}_0(\Omega)$.
\end{remark}
Theorem \ref{4} is established using the interpolation theory in Sobolev spaces and the $L^p-L^q$-maximal regularity results of the linear heat equation. Under the assumptions given in Theorem~\ref{2bis}, we can derive from Theorem~\ref{4} some stabilization properties. Precisely, we prove
\begin{theorem}\label{5}
Let $p=2$, $\delta<3$, $u_0\in {\mathcal C}\cap H^1_0(\Omega)$. Assume that $f$ satisfies assumptions of Theorem~\ref{3ter}. Then, the solution to $({\rm P}_t)$, $u$, defined in $(0,\infty)\times\Omega$ satisfies $u(t)\to u_\infty$ as $t\to +\infty$ in $L^\infty(\Omega)\cap H^1_0(\Omega)$ where $u_\infty$ is the solution given in Theorem~\ref{2bis}.
\end{theorem}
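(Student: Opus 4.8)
\emph{Proof strategy.} The plan is to reduce the claim to an $H^1_0$-convergence statement and then to exploit the gradient‐flow structure of $({\rm P}_t)$ for $p=2$. First I would note that the hypotheses here are exactly those of Theorem~\ref{3ter} (recall $2+\tfrac1{p-1}=3$ for $p=2$), so that theorem already supplies global existence and $u(t)\to u_\infty$ in $L^\infty(\Omega)$, while Theorem~\ref{3bis} supplies $u\in C([0,\infty);H^1_0(\Omega))$, $u_t\in L^2_{\mathrm{loc}}([0,\infty);L^2(\Omega))$, uniform cone bounds $\underline u\le u(t,\cdot)\le\overline u$ with $\underline u,\overline u\in{\mathcal C}$, and the energy identity \eqref{secondenergyeqbis}. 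Introducing
\[
E(v)\eqdef\frac12\int_\Omega|\nabla v|^2\,{\rm d}x-\frac1{1-\delta}\int_\Omega v^{1-\delta}\,{\rm d}x-\int_\Omega F(x,v)\,{\rm d}x
\]
(with $\tfrac1{1-\delta}v^{1-\delta}$ read as $\log v$ when $\delta=1$), identity \eqref{secondenergyeqbis} becomes $\int_0^t\|u_t(s)\|_{L^2(\Omega)}^2\,{\rm d}s+E(u(t))=E(u_0)$. From the uniform cone bounds and $\delta<3$ — which is precisely the condition making $\underline u^{1-\delta}\in L^1(\Omega)$, in agreement with the sharp threshold — I would deduce that $\int_\Omega u(t)^{1-\delta}$ and $\int_\Omega F(x,u(t))$ are bounded uniformly in $t$, so $E(u(t))$ is bounded below; being nonincreasing it converges to some $E_\infty\in\R$, and $\int_0^\infty\|u_t(s)\|_{L^2(\Omega)}^2\,{\rm d}s\le E(u_0)-E_\infty<\infty$. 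In particular $\{u(t)\}_{t\ge0}$ is bounded in $H^1_0(\Omega)$, and it remains only to upgrade the known $L^\infty$-convergence to convergence in $H^1_0(\Omega)$.

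For this, I would first observe that boundedness in $H^1_0(\Omega)$ together with $u(t)\to u_\infty$ in $L^2(\Omega)$ forces $u(t)\rightharpoonup u_\infty$ weakly in $H^1_0(\Omega)$ as $t\to\infty$ (every sequence has a weakly convergent subsequence, necessarily to $u_\infty$). Since $H^1_0(\Omega)$ is Hilbert, strong convergence will follow once $\|\nabla u(t)\|_{L^2(\Omega)}\to\|\nabla u_\infty\|_{L^2(\Omega)}$. Writing $\tfrac12\|\nabla u(t)\|_{L^2(\Omega)}^2=E(u(t))+\tfrac1{1-\delta}\int_\Omega u(t)^{1-\delta}+\int_\Omega F(x,u(t))$ and using dominated convergence (the integrands being controlled via the cone bounds and $u(t)\to u_\infty$ uniformly) to pass to the limit in the last two terms, the required norm convergence reduces to the single equality $E_\infty=E(u_\infty)$.

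The identification $E_\infty=E(u_\infty)$ — the absence of energy loss along the flow — is the step I expect to be the main obstacle. Here I would use $\int_0^\infty\|u_t\|_{L^2(\Omega)}^2<\infty$ to select times $t_n\to\infty$ with $\|u_t(t_n)\|_{L^2(\Omega)}\to0$ and at which $-\Delta u(t_n)=-u_t(t_n)+u(t_n)^{-\delta}+f(x,u(t_n))$ holds in $H^{-1}(\Omega)$ (the singular term lying in $W^{-1,2}(\Omega)$ thanks to the cone condition, as in Theorem~\ref{2bis} and the abstract). Subtracting the weak form of $({\rm Q})$ for $u_\infty$ and testing with $u(t_n)-u_\infty\in H^1_0(\Omega)$ gives
\[
\int_\Omega|\nabla(u(t_n)-u_\infty)|^2\,{\rm d}x=\int_\Omega\big(-u_t(t_n)+u(t_n)^{-\delta}-u_\infty^{-\delta}+f(x,u(t_n))-f(x,u_\infty)\big)(u(t_n)-u_\infty)\,{\rm d}x,
\]
and on the right-hand side: the $u_t$-term is $\le\|u_t(t_n)\|_{L^2(\Omega)}\|u(t_n)-u_\infty\|_{L^2(\Omega)}\to0$; the term $(u(t_n)^{-\delta}-u_\infty^{-\delta})(u(t_n)-u_\infty)$ is $\le0$ pointwise because $s\mapsto s^{-\delta}$ is decreasing (and is integrable since $\delta<3$); and the $f$-term is $\le L\|u(t_n)-u_\infty\|_{L^2(\Omega)}^2\to0$, where $L$ is the Lipschitz constant of $f(x,\cdot)$ on $[0,\|\overline u\|_{L^\infty(\Omega)}]$, uniform in $x$. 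Hence $u(t_n)\to u_\infty$ strongly in $H^1_0(\Omega)$, so $E(u(t_n))\to E(u_\infty)$; since $t\mapsto E(u(t))$ is nonincreasing with limit $E_\infty$, this forces $E_\infty=E(u_\infty)$. Going back to the second paragraph, $\|\nabla u(t)\|_{L^2(\Omega)}\to\|\nabla u_\infty\|_{L^2(\Omega)}$, hence $u(t)\to u_\infty$ in $H^1_0(\Omega)$, which combined with Theorem~\ref{3ter} gives the asserted convergence in $L^\infty(\Omega)\cap H^1_0(\Omega)$. The technical work — checking that the singular pairings in the tested identity coincide with classical integrals (via the cone condition and $\delta<3$) and that the times $t_n$ can be chosen legitimately — is the part requiring care; the monotonicity of $t\mapsto E(u(t))$ is what lets the strong convergence at the almost stationary times $t_n$ propagate to the whole trajectory.
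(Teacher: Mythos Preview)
Your proof is correct and takes a genuinely different route from the paper's. The paper does not use the energy identity at all here: instead it invokes the regularity theory built in Section~\ref{pegal2} (Theorem~\ref{regularitydistri-equation} and Lemma~\ref{maximalregularity}) to show that for any $t_0>0$ the trajectory $\{u(t):t\ge t_0\}$ is bounded in the space $X_\eta$ of \eqref{eqXeta}, which for $\delta<3$ and $\eta$ small is compactly embedded in $H^1_0(\Omega)$. Compactness in $H^1_0$ together with the $L^\infty$-convergence from Theorem~\ref{3ter} then forces $u(t)\to u_\infty$ in $H^1_0(\Omega)$ immediately, with no need to identify an energy limit or select almost-stationary times.

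Your argument, by contrast, is the classical gradient-flow/Lyapunov approach: monotonicity of $E(u(t))$, finiteness of $\int_0^\infty\|u_t\|_{L^2}^2$, selection of times $t_n$ with $u_t(t_n)\to 0$, and then the key testing step where the sign $(a^{-\delta}-b^{-\delta})(a-b)\le 0$ kills the singular term. This is more self-contained --- it uses only Theorem~\ref{3bis} and the weak formulation, not the interpolation/maximal-regularity machinery --- and is the argument one would reach for when $p\neq 2$ and no analytic semigroup is available. The paper's approach is shorter given the tools already developed in Section~\ref{pegal2}, and yields the stronger information that the trajectory is bounded in $X_\eta\supsetneq H^1_0(\Omega)$. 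Both are standard and valid; your caveats about justifying the pointwise-in-$t$ equation at the selected $t_n$ and about the singular pairing being a genuine integral (which does hold here since $\underline u^{-\delta}\,\overline u\in L^1(\Omega)$ precisely when $\delta<3$) are the right ones.
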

We give now briefly the state of art concerning parabolic quasilinear singular equations. The corresponding stationary equation was studied profusely in the litterature. In particular the case $p=2$, mostly when $\delta<1$ and also when $g$ depends on $u$ was considered in detail (see the pionniering work {\sc Crandall-Rabinowitz-Tartar} \cite{CrRaTa}, the bibliography in {\sc Hern\'andez-Mancebo} \cite{HeMa} and {\sc Perera-Silva} \cite{PeSi}). The case $p\neq 2$ was not considered so far. We can mention the work {\sc Aranda-Godoy} \cite{ArGo} where existence results are obtained via the bifurcation theory for $1<p\leq 2$ and $g=g(u)$ satisfying some growth conditions. In {\sc Giacomoni-Schindler-Tak\'a\v{c}} \cite{GiScTa} the existence and multiplicity  results (for $1<p<\infty$ $g(u)=u^q$ with $p-1<q\leq p^*-1$ and $0<\delta<1$ are proved by using variational methods and regularity results in H\"older spaces. Concerning the parabolic case, avalaible results mostly concern 
 the case $p=2$. In this regard, we can quote the result in {\sc Hernandez-Mancebo-Vega} \cite{HeMaVe} where in the range $0<\delta<\frac{1}{2}$, properties of the linearised operator (in $C^1_0(\overline{\Omega})$) and the validity of the strong maximum principle are studied. In {\sc Tak\'a\v{c}} \cite{Ta}, a stabilization result in $C^1$ is proved for a class of parabolic singular problems via a clever use of weighted Sobolev spaces. We also mention the work {\sc Davila-Montenegro} \cite{DaMo} still concerning the case $p=2$ and with singular absorption term. In this nice work, the authors achieved uniqueness within the class of functions satisfying $u(x,t)\geq c\dist(x,\partial\Omega)^\gamma$ for suitable $\gamma$ and $c>0$ and discuss the asymptotic behaviour of solutions. Finally, we would like to quote the nice paper {\sc Winkler} \cite{Wi} where the author shows that uniqueness is violated in case of non homogeneous boundary Dirichlet condition.
\par The present paper is organized as follows. The two next sections (Section~\ref{linearcase}, Section~\ref{linearparabolic}) contain the proofs of Theorems~\ref{1}, \ref{2}, \ref{3} and Proposition~\ref{regsobolev}.
Theorems~\ref{2bis}, \ref{3bis}, \ref{3ter} and Proposition~\ref{regsobolevbis} are established in section~\ref{nonlinearcase}. Finally, the non degenerate case (i.e. $p=2$) is dealt in Section~\ref{pegal2} where in particular Theorems~\ref{4} and \ref{5} are proved.
\section{Proof of Theorems \ref{1} and \ref{2}}\label{linearcase}
We first prove Theorem \ref{1}. 
\begin{proof}
First, let us consider the case $\delta<1$. For $\lambda>0$, we define the following energy functional~:
\begin{eqnarray*}
E_\lambda(u)\eqdef\frac{1}{2}\int_{\Omega}u^2{\rm d}x+\frac{\lambda}{p}\int_{\Omega}|\nabla u|^p{\rm d}x-\frac{\lambda}{1-\delta}\int_{\Omega}(u^+)^{1-\delta}{\rm d}x-\int_{\Omega}gu\,{\rm d}x.
\end{eqnarray*}
$E_\lambda$ is well-defined in $X=W^{1,p}_0(\Omega)$ if $p\geq \frac{2{\rm N}}{{\rm N}+2}$. If $1<p<\frac{2{\rm N}}{{\rm N}+2}$, $E_\lambda$ is well-defined in $X=W^{1,p}_0(\Omega)\cap L^2(\Omega)$. It is easy to see that $E_\lambda$ is stricly convex, continuous and coercive in $X$. Thus, since $X$ is reflexive, $E_\lambda$ admits a unique global minimizer denoted by $u_\lambda$.
We show now that $u_\lambda\in {\mathcal C}$. Let $\phi_1$ be the normalized positive eigenfunction associated with the principal eigenvalue $\lambda_1(\Omega)$ of $-\Delta_p$ with homogeneous boundary Dirichlet conditions (see {\sc Anane} \cite{Anane-1}, \cite{Anane-2} for further details):
\begin{equation}
\label{EV:phi_1}
  - \Delta_p \phi_1 = \lambda_1\, |\phi_1|^{p-2} \phi_1
    \quad\mbox{ in }\, \Omega ;\qquad
  \phi_1 = 0 \quad\mbox{ on }\, \partial\Omega ,
\end{equation}
$\phi_1\in W_0^{1,p}(\Omega)$
is normalized by $\phi_1 > 0$ in $\Omega$ and
$\int_{\Omega} \phi_1^p \,\mathrm{d}x = 1$.
Note that the strong maximum and boundary point principles from
{\sc V\'az\-quez} \cite[Theorem~5, p.~200]{Va} guarantee
$\phi_1 > 0$ in $\Omega$ and
$\frac{\partial\phi_1}{\partial\nu} < 0$ on $\partial\Omega$,
respectively.
Hence, since $\phi_1\in C^1(\overline{\Omega})$, there are constants
$\ell$ and $L$, $0 < \ell < L$, such that
$\ell\, d(x)\leq \phi_1(x)\leq L\, d(x)$ for all $x\in \Omega$. Moreover, we observe that for $\epsilon>0$ small enough (depending on $\lambda$, $\delta$ and $g$) we have 
\begin{equation}
\left\{\begin{array}{rll}
\displaystyle \epsilon\phi_1-\lambda(\Delta_p(\epsilon\phi_1)+\frac{1}{(\epsilon\phi_1)^\delta})&\displaystyle <g&\displaystyle \mbox{in}\;\Omega,\\
\displaystyle \epsilon\phi_1&\displaystyle =0\;&\displaystyle \mbox{on }\;\partial\Omega.
\end{array}
\right.\label{eq1}
\end{equation}
Thus, for $t>0$, we set $v_\lambda\eqdef(\epsilon\phi_1-u_\lambda)^+$ and $\chi(t)\eqdef E_\lambda(u_\lambda+tv_\lambda)$. From the Hardy Inequality, it follows that $\chi$ is differentiable for $t\in (0,1]$ and
\begin{eqnarray*}
\chi'(t)=<E'_\lambda(u_\lambda+tv_\lambda),v_\lambda>.
\end{eqnarray*}
The optimality of $u_\lambda$ guarantees $\chi'(0)=0$ and the strict convexity of $E_\lambda$ ensures that $t\rightarrow\,\chi'(t)$ is increasing. Therefore, with \eqref{eq1} we obtain that
\begin{eqnarray*}
0<\chi'(1)=<E'_\lambda(\epsilon\phi_1),v_\lambda><0
\end{eqnarray*}
if $v_\lambda$ has non-zero measure support. Then $\epsilon\phi_1\leq u_\lambda$ and $E_\lambda$ is G\^ateaux-differentiable in $u_\lambda$. Consequently, for any $\phi\in X$,
\begin{eqnarray*}
<E'_\lambda(u_\lambda),\phi>=<u_\lambda-\lambda(\Delta_pu_\lambda+\frac{1}{u_\lambda^\delta})-g,\phi>=0.
\end{eqnarray*}
We observe that if $\delta<2+\frac{1}{p-1}$ then
\begin{equation}
\label{monotonicity2}
u\to u-\lambda(\Delta_p u-\frac{1}{u^\delta})\quad\mbox{is monotone from}\; W^{1,p}_0(\Omega)\cap {\mathcal C}\;\mbox{to}\; W^{-1,\frac{p}{p-1}}(\Omega).
\end{equation}
Then, by the weak comparison principle, we have also that
\begin{eqnarray*}
u_\lambda\leq M
\end{eqnarray*}
for any $M>|g|_{L^\infty(\Omega)}+\frac{\lambda}{|g|_{L^\infty(\Omega)}^\delta}$. Then, $u_\lambda\in L^\infty(\Omega)$. Let $U\in C^{1,\alpha}(\overline{\Omega})\cap \mathcal{C}$ (with suitable $0<\alpha<1$) be the unique positive solution (see {\sc Giacomoni-Schindler-Tak\'a\v{c}} \cite[Theorem~B.1]{GiScTa} for the existence and regularity of $U$) to
\begin{eqnarray}\label{eqhom}
\left\{\begin{array}{rlr}
\displaystyle  -\Delta_p u&\displaystyle =\frac{1}{u^\delta}&\mbox{in}\;\Omega\displaystyle ,\\
\displaystyle u&\displaystyle =0 \;&\displaystyle \mbox{on }\, \partial\Omega.
\end{array}
\right.
\end{eqnarray}
Therefore, observing that for $M'>0$,
\begin{eqnarray*}
\left\{\begin{array}{rll}
\displaystyle  M'U-\lambda\left(\Delta_p (M'U)+\frac{1}{(M'U)^\delta}\right)&\displaystyle =M'U+\frac{\lambda({M'}^{p-1}-{M'}^{-\delta})}{U^\delta}\quad&\displaystyle\mbox{in}\;\Omega\\
\displaystyle M'U&\displaystyle =0 \;&\displaystyle \mbox{on }\, \partial\Omega,
\end{array}
\right.
\end{eqnarray*}
and by the weak comparison principle, we get that $u_\lambda\leq M'U$ for $M'$ large enough. Together with $\epsilon\phi_1\leq u_\lambda$, it follows that $u_\lambda\in {\mathcal C}$. Again using {\sc Giacomoni-Schindler-Tak\'a\v{c} \cite[Theorem~B.1]{GiScTa}}, we get that $u_\lambda\in C^{1,\alpha}(\overline{\Omega})$ and then $u_\lambda\in C_0(\overline{\Omega})$.
\par We consider now the case $\delta\geq 1$. We use in this case the weak comparison principle, the existence of suitable subsolutions and supersolutions of the following approximated problem~:
\begin{eqnarray*}
({\rm P}_\epsilon)\left\{\begin{array}{lc}
& u-\lambda\left(\Delta_p u+\frac{1}{(u+\epsilon)^\delta}\right)=g\quad\mbox{in}\;\Omega,\\
& u=0 \;\mbox{ on }\, \partial\Omega,\;\; u>0\;\mbox{in}\;\Omega.
\end{array}
\right.
\end{eqnarray*}
Using a minimization argument as in the case $\delta<1$, we get the existence and the uniqueness of the solution to $({\rm P}_\epsilon)$, denoted $u_\epsilon$, in $W^{1,p}_0(\Omega)^+\cap L^\infty(\Omega)$. From the elliptic regularity theory (see {\sc lieberman} \cite{Li}), we obtain that $u_\epsilon\in C^{1,\alpha}(\overline{\Omega})$ for some $\alpha\in (0,1)$. We now construct appropriate subsolutions and supersolutions for $({\rm P}_\epsilon)$.
For $\delta =1$, by straightforward computations we have that for $A>0$ large enough (depending on the diameter of $\Omega$), and for $\eta>0$ small enough (depending on $\lambda$ and $g$ but not on $\epsilon$)
\begin{eqnarray}
\label{epsisub}
\underline{u}_\epsilon\eqdef(\eta\phi_1+\epsilon')\left[\ln(\frac{A}{\eta\phi_1+\epsilon'})\right]^{\frac{1}{p}}-\epsilon'\left[\ln(\frac{A}{\epsilon'})\right]^{\frac{1}{p}},
\end{eqnarray}
with $\epsilon'>0$ satisfying $\epsilon=\epsilon'[\ln(\frac{A}{\epsilon'})]^{\frac{1}{p}}$, is a subsolution to $({\rm P}_\epsilon)$. Similarly, for $M>0$ large enough (depending on $\lambda$ and $g$ but not on $\epsilon$)
\begin{eqnarray}
\label{epsisup}
\bar{u}_\epsilon\eqdef(M\phi_1+\epsilon')\left[\ln(\frac{A}{M\phi_1+\epsilon'})\right]^{\frac{1}{p}}-\epsilon'\left[\ln(\frac{A}{\epsilon'})\right]^{\frac{1}{p}},
\end{eqnarray}
is a supersolution to $({\rm P}_\epsilon)$ satisfying $\bar{u}_\epsilon\geq\underline{u}_\epsilon$. 
If $\delta >1$, we consider the following subsolution and supersolution respectively:
\begin{eqnarray}
\label{epsisub2}
\underline{u}_\epsilon\eqdef\eta\left[(\phi_1+\epsilon^{\frac{p-1+\delta}{p}})^{\frac{p}{p-1+\delta}}-\epsilon\right],
\end{eqnarray}
for $\eta >0$ small enough and 
\begin{eqnarray}
\label{epsisup2}
\bar{u}_\epsilon\eqdef M\left[(\phi_1+\epsilon^{\frac{p-1+\delta}{p}})^{\frac{p}{p-1+\delta}}-\epsilon\right],
\end{eqnarray}
for $M>0$ large enough.
Since the operator $u\rightarrow\,-\Delta_pu-\frac{1}{(u+\epsilon)^\delta}$ is monotone from $(W^{1,p}_0(\Omega))^+$ to $W^{-1,\frac{p}{p-1}}(\Omega)$ (see {\sc Deimling} \cite{De} for further details about the theory of monotone operators), we get from the weak comparison principle that 
\begin{equation}\label{sub}
\underline{u}_\epsilon\leq u_\epsilon\leq \bar{u}_\epsilon.
\end{equation}
Again from the weak comparison principle, we have that
\begin{eqnarray*}
0<\epsilon_1<\epsilon_2\Rightarrow\,\left\{\begin{array}{cl}
 \displaystyle u_{\epsilon_2}<u_{\epsilon_1}\quad&\displaystyle \mbox{in}\;\Omega,\\
 \displaystyle u_{\epsilon_1}+\epsilon_1<u_{\epsilon_2}+\epsilon_2\quad&\displaystyle \mbox{in}\;\Omega,
\end{array}
\right.
\end{eqnarray*}
from which it follows that $(u_{\epsilon_n})_{n\in \N}$ is a Cauchy sequence as $\epsilon_n\to 0^+$ in $C_0(\bar{\Omega})$. Then $u_{\epsilon_n}\to u$ in $C_0(\bar{\Omega})$ and by passing to the limit in \eqref{sub}
we deduce that $\underline{u}\leq u\leq \overline{u}$ where $\underline{u}$ and $\overline{u}$ are the respective subsolution and supersolution to $(P)$ given by
\begin{eqnarray}
\quad \phi =\left\{\begin{array}{ll}
\phi_1\left(\ln(\frac{A}{\phi_1})\right)^{\frac{1}{p}}\quad&\mbox{if}\; \delta=1,\\
\phi_1^{\frac{p}{p-1+\delta}}\quad&\mbox{if}\;\delta>1,
\end{array}\right.\mbox{ and }\quad 
\displaystyle \left\{\begin{array}{ll}
\displaystyle \underline{u}=\eta \phi,\\
\displaystyle \overline{u}=M \phi,
\end{array}\right.
\label{subsol-0}
\end{eqnarray}
(with $A$, $M>0$ large enough and $\eta >0$ small enough, depending on $\lambda$, $g$). Then it follows that $u\in{\mathcal C}\cap C_0(\overline{\Omega})$. Let us show that $u$ is a weak solution to  $({\rm P})$. Since $\delta<2+\frac{1}{p-1}$, we get from \eqref{sub} and the Hardy Inequality that 
\begin{eqnarray*}
\displaystyle\limsup_{n\in\N}\int_{\Omega}\frac{u_{\epsilon_n}}{(u_{\epsilon_n}+\epsilon_n)^\delta}{\rm d}x<+\infty,
\end{eqnarray*}
 and consequently, by multiplying by $u_{\epsilon_n}$ the first equation of $({\rm P}_\epsilon)$ and integrating by parts, we obtain $\displaystyle\sup_{n\in\N}\|u_{\epsilon_n}\|_{W^{1,p}_0(\Omega)}<+\infty$.  Moreover, by subtracting $({\rm P}_{\epsilon_n})$ to $({\rm P}_{\epsilon_m})$ and recalling the following  well-know inequality for $p\geq 2$, $w,v$ in $W^{1,p}(\Omega)$ and suitable $C_1>0$,
\begin{equation}\label{inegaliteAlg}
\int_\Omega (|\nabla w|^{p-2}\nabla w-|\nabla v|^{p-2}\nabla v)\nabla (w-v){\rm d}x \geq C_1\int_\Omega |\nabla (w-v)|^p {\rm d}x 
\end{equation}
and the following well-know inequality for $p< 2$, $w,v$ in $W^{1,p}(\Omega)$ and suitable $C_2>0$,
\begin{equation}\label{inegaliteAlg2}
\int_\Omega (|\nabla w|^{p-2}\nabla w-|\nabla v|^{p-2}\nabla v)\nabla (w-v){\rm d}x \geq \frac{C_2\left(\int_\Omega |\nabla (w-v)|^p{\rm d}x\right)^{\frac{2}{p}}}{\left(\left(\int_\Omega |\nabla w|^p{\rm d}x \right)^{\frac{1}{p}}+\left (\int_\Omega |\nabla v|^p{\rm d}x \right)^{\frac{1}{p}}\right )^{2-p}},
\end{equation}
we obtain
\begin{equation}
\nonumber
<-\Delta_p u_{\epsilon_n}+\Delta_p u_{\epsilon_m},u_{\epsilon_n}-u_{\epsilon_m}>\geq \left\{\begin{array}{ll}
C_1\|u_{\epsilon_n}-u_{\epsilon_m}\|_{W^{1,p}_0(\Omega)}^p&\mbox{if}\; p\geq 2,\\
C_2\frac{\|u_{\epsilon_n}-u_{\epsilon_m}\|_{W^{1,p}_0(\Omega)}^2}{(\|u_{\epsilon_n}\|_{W^{1,p}_0(\Omega)}+\|u_{\epsilon_m}\|_{W^{1,p}_0(\Omega)})^{2-p}}&\mbox{if}\; p<2.
\end{array}
\right.
\end{equation}
Then we deduce that $u_{\epsilon_n}$ is also a Cauchy sequence in $W^{1,p}_0(\Omega)$ as $\epsilon_n\to 0^+$ and that $u_{\epsilon_n}\to u$ in $W^{1,p}_0(\Omega)$. Thus, it is easy to derive that $u$ is a weak solution to $({\rm P})$. Finally, the uniqueness of the solution to  $({\rm P})$ in $W^{1,p}_0(\Omega)\cap {\mathcal C}$ follows from \eqref{monotonicity2}.
\end{proof} 
We prove now Theorem \ref{2}.
\begin{proof}
Let $\delta\geq 2+\frac{1}{p-1}$. We give an alternative proof for existence of solutions.
Let $(\Omega_k)_k$ be an increasing  sequence of smooth domains such that $\Omega_k\uparrow\Omega$ (in the Hausdorff Topology) and $\frac{1}{k}\leq\dist(x,\partial\Omega) \leq \frac{2}{k}$, $\forall\, x\in \Omega_k$.
We use the sub\--solution and super\--solution technique in $\Omega_k$ and pass to the limit as $k\to\infty$. For $0<\eta<M$, let 
\begin{eqnarray*}
\underline{u}\eqdef\eta(\phi_1)^{\frac{p}{p-1+\delta}},\quad \bar{u}\eqdef M(\phi_1)^{\frac{p}{p-1+\delta}}.
\end{eqnarray*}
For $\eta$ small enough and $M$ large enough, $\underline{u}$ and $\bar{u}$ are respectively a subsolution and a supersolution  to $({\rm P})$ and both belong to ${\mathcal C}\cap C_0(\overline{\Omega})$. By using  a minimization argument in $W^{1,p}_0(\Omega_k)$ as in the case $\delta<1$ (note that the term associated to $\frac{1}{u^\delta}$ in the energy functional is not singular since $\underline{u}>0$ on $\partial\Omega_k$), there is a positive solution $v_k\in W^{1,p}_0(\Omega_k)$ to
\begin{eqnarray*}
\left\{\begin{array}{rll}
 \displaystyle u-\lambda(\Delta_p(u +\underline{u})+\frac{1}{(u+\underline{u})^\delta})&\displaystyle=g-\underline{u}\quad&\displaystyle\mbox{in}\;\Omega_k,\\
\displaystyle u&\displaystyle=0\;&\displaystyle\mbox{on }\;\partial\Omega_k. 
\end{array}
\right.
\end{eqnarray*}
From {\sc Lieberman} \cite{Li}, $v_k\in C^{1,\beta}(\overline{\Omega}_k)$ for some $\beta\in (0,1)$. Then, $u_k\eqdef u+\underline{u}\in C^{1,\beta}(\overline{\Omega}_k)$ satisfies
\begin{eqnarray*}
\left\{\begin{array}{rll}
 \displaystyle u_k-\lambda(\Delta_pu_k+\frac{1}{u_k^\delta})&\displaystyle =g\quad&\displaystyle \mbox{in}\;\Omega_k,\\
\displaystyle u&\displaystyle =\underline{u}\;&\displaystyle \mbox{on }\;\partial\Omega_k.
\end{array}
\right.
\end{eqnarray*}
and $\underline{u}\leq u_k\leq \bar{u}$ holds. From the weak comparison principle, we have that $u_k\leq u_{k+1}$ in $\Omega_k$, and if $\Tilde u_k\in C_0(\overline\Omega)$ denotes the extension of $u_k$ by $\underline{u}$ outside $\Omega_k$, then $\underline{u}\leq \Tilde u_k\leq \Tilde u_{k+1}\leq \bar{u}$ and by Dini's Theorem, $\Tilde u_k\to u$ in $C_0(\overline\Omega)\cap \mathcal{C}$. Moreover, for every compact subset ${\mathcal K}$ of $\Omega$ and $k$ large enough so that ${\mathcal K}\subset \Omega_k$, we have $\frac{1}{\Tilde{u}_k^\delta}=\frac{1}{u_k^\delta}\leq \frac{1}{\underline{u}^\delta}\in L^\infty({\mathcal K})$ and $ \Delta_p\Tilde u_k= \Delta_p u_k=-g+u_k-\frac{\lambda}{u_k^\delta}$ bounded in $L^\infty({\mathcal K})$ uniformely in $k$. Then using local regularity results (see for instance {\sc Serrin} \cite{Se}, {\sc Tolksdorf} \cite{To} and \cite{To1}, {\sc DiBenetdetto} \cite{Be}), for $k$ large enough we get that $u_k$ is bounded in $C^1({\mathcal K})$
  and then converges to $u$ in $W^{1,p}({\mathcal K})$. Then $\underline{u}\leq u\eqdef\displaystyle\lim_{k\to\infty}u_k\in W^{1,p}_{\rm loc}(\Omega)$ and satisfies $({\rm P})$ in the sense of distributions. 
Let us show that $u\not\in W^{1,p}_0(\Omega)$. For that, we argue by contradiction: assume that $u\in W^{1,p}_0(\Omega)$. Then, from the equation in $({\rm P})$, we get that $\frac{1}{u^\delta}\in W^{-1,\frac{p}{p-1}}(\Omega)$. Thus, $\int_{\Omega}{\bar{u}}^{1-\delta}{\rm d}x\leq \int_{\Omega}{u}^{1-\delta}{\rm d}x<+\infty$ which contradicts the definition of $\bar{u}$. The proof of Theorem \ref{2} is now complete.
\end{proof}
\section{Proof of Theorem \ref{3} and Proposition \ref{regsobolev}}\label{linearparabolic}
Using Theorem \ref{1} and a time discretization method, we prove Theorem \ref{3}.
\begin{proof}
Let $N\in \N^*$, $n\geq 2$ and $\Delta_t=\frac{T}{N}$. For $0\leq n\leq N$, we define $t_n\eqdef n\Delta_t$, $h^n(\cdot)\eqdef\frac{1}{\Delta_t}\int_{t_{n-1}}^{t_n}h(\tau,\cdot)\,{\rm d}\tau\,\in L^\infty(\Omega)$ and the function $h_{\Delta_t}\in L^\infty(Q_T)$ as follows:
$$
h_{\Delta_t}(t)\displaystyle \eqdef h^n, \quad \forall t\in\,[t_{n-1},t_n), \; \forall\,n\in \{1,\dots,N\}.
$$
Notice that we have for all $1<q<+\infty$:
\begin{eqnarray}
\|h_{\Delta_t}\|_{L^q(Q_T)}&\leq& (T|\Omega|)^{\frac{1}{q}}\|h\|_{\infty},\label{eq2}\\
h_{\Delta_t}&\to &h \mbox{ in } L^q(Q_T) \label{eq2bis}.
\end{eqnarray}
From Theorem \ref{1} (with $\lambda=\Delta_t$, $g=\Delta_t h^n+u^{n-1}\in L^\infty(\Omega)$), we define by iteration $u^n\in W^{1,p}_0(\Omega)\cap {\mathcal C}$ with the following scheme:
\begin{equation}\label{scheme}
\left\{\begin{array}{rll}
\displaystyle \frac{u^n-u^{n-1}}{\Delta_t}-\Delta_pu^n-\frac{1}{(u^n)^\delta}&=h^n\;&\mbox{in}\;\Omega,\\
\displaystyle u^n&=0\;&\mbox{on }\;\partial\Omega,
\end{array}
\right.
\end{equation}
and $u^0=u_0\in W^{1,p}_0(\Omega)\cap {\mathcal C}$. Then, defining functions $u_{\Delta_t}$, $\tilde{u}_{\Delta_t}$ by: for all $\,n\in \{1,\dots,N\}$,
\begin{equation}
\forall t\in\,[t_{n-1},t_n), \left \{\begin{array}{lrl}
& \displaystyle u_{\Delta_t}(t)&\eqdef u^n,\\
& \displaystyle \tilde{u}_{\Delta_t}(t)&\eqdef \displaystyle \frac{(t-t_{n-1})}{\Delta_t}(u^n-u^{n-1})+u^{n-1},
\end{array}
\right.\label{defUdiscret}
\end{equation}
we have that
\begin{eqnarray}\label{distributions}
\frac{\partial\tilde{u}_{\Delta_t}}{\partial t}-\Delta_p u_{\Delta_t}-\frac{1}{{u_{\Delta_t}}^\delta}=h_{\Delta_t}\in L^\infty(Q_T).
\end{eqnarray}
\par 
Using energy estimates, we first establish some apriori estimates for $u_{\Delta_t}$ and $\tilde{u}_{\Delta_t}$ independent of $\Delta_t$. Precisely, multiplying \eqref{scheme} by $\Delta_t u^n$ and summing from $n=1$ to $N'\leq N$, we get
 for $\epsilon>0$ small, by the Young Inequality and \eqref{eq2},
\begin{eqnarray}
\sum_{n=1}^{N'}\int_{\Omega}(u^n-u^{n-1})u^n{\rm d}x+\Delta_t\left[\displaystyle\sum_{n=1}^{N'}\|u^n\|_{W^{1,p}_0(\Omega)}^p-\displaystyle\sum_{n=1}^{N'}\int_{\Omega}(u^n)^{1-\delta}{\rm d}x\right]&\leq& \nonumber \\
C(\epsilon)T|\Omega|\|h\|_{\infty}^{\frac{p}{p-1}}+\epsilon \Delta_t\displaystyle\sum_{n=1}^{N'}\|u^n\|^p_{W^{1,p}_0(\Omega)}.&&\label{esti2}
\end{eqnarray}
In addition,
\begin{eqnarray}
&\displaystyle\sum_{n=1}^{N'}\int_{\Omega}(u^n-u^{n-1})u^n{\rm d}x=\frac{1}{2}\displaystyle\sum_{n=1}^{N'}\int_{\Omega}(|u^n-u^{n-1}|^2+|u^n|^2-|u^{n-1}|^2){\rm d}x=\nonumber\\
&\displaystyle \frac{1}{2}\displaystyle\sum_{n=1}^{N'}\int_{\Omega}|u^n-u^{n-1}|^2{\rm d}x+\frac{1}{2}\int_{\Omega}|u^{N'}|^2{\rm d}x-\frac{1}{2}\int_{\Omega}|u_0|^2{\rm d}x.\label{esti4}
\end{eqnarray}
Next, we estimate the singular term in the above expression. For that, arguing as in the proof of Theorem \ref{1}, we can prove the existence of $\underline{u},\,\bar{u}\in W^{1,p}_0(\Omega)\cap {\mathcal C}$ such that $\underline{u}\leq u_0\leq \bar{u}$ (since $u_0\in {\mathcal C}$) and such that 
\begin{eqnarray*}
-\Delta_p \underline{u}-\frac{1}{\underline{u}^\delta}\leq -\|h\|_{L^\infty(Q_T)}\quad\mbox{in}\;\Omega,
\end{eqnarray*}
\begin{eqnarray*}
-\Delta_p \bar{u}-\frac{1}{\bar{u}^\delta}\geq \|h\|_{L^\infty(Q_T)}\quad\mbox{in}\;\Omega.
\end{eqnarray*}
Indeed, if $\delta<1$ choose $\underline{u}=\eta\phi_1$ and $\overline{u}=M U$ with $U$ solution of \eqref{eqhom} and  if $\delta\geq 1$ choose $\underline{u}$, $\overline{u}$ as in \eqref{subsol-0}, where $A>0$, $M>0$ are large enough and $\eta >0$ is small enough. Note that $A$, $M$, $\eta$ depend on $\|h\|_{L^\infty(Q_T)}$.
Then iterating the application of the weak comparison principle, we obtain that for all $n\in\N$, $\underline{u}\leq u^n\leq \bar{u}$ which implies that 
\begin{equation}\label{controluniform}
\underline{u}\leq u_{\Delta_t},\,\tilde{u}_{\Delta_t}\leq \bar{u}.
\end{equation}
Therefore, since $\delta<2+\frac{1}{p-1}$,
\begin{equation}
\label{esti1}
\Delta_t\displaystyle\sum_{n=1}^{N'}\int_{\Omega}(u^n)^{1-\delta}{\rm d}x\leq \left\{\begin{aligned}
T\int_{\Omega}\bar{u}^{1-\delta}{\rm d}x<+\infty \quad\mbox{if}\; \delta\leq 1,\\
T\int_{\Omega}\underline{u}^{1-\delta}{\rm d}x<+\infty \quad\mbox{if}\; \delta >1.\\
\end{aligned}
\right.
\end{equation}
Gathering the \eqref{esti2}, \eqref{esti4}, \eqref{controluniform} and \eqref{esti1}, we get that $u_{\Delta_t},\,\tilde{u}_{\Delta_t}\in{\mathcal C}$ uniformly and are bounded in $L^p(0,T;\,W^{1,p}_0(\Omega))\cap L^\infty(0,T;\,L^\infty(\Omega))$. We now use a second energy estimate. Multiplying \eqref{scheme} by $u^n-u^{n-1}$ and summing from $n=1$ to $N'\leq N$,
we get by the Young Inequality
\begin{eqnarray}
\Delta_t\displaystyle\sum_{n=1}^{N'}\int_{\Omega}\big{(}\frac{u^n-u^{n-1}}{\Delta_t}\big{)}^2{\rm d}x&+&\displaystyle\sum_{n=1}^{N'}\int_{\Omega}|\nabla u^n|^{p-2}\nabla u^n\cdot\nabla(u^n-u^{n-1}){\rm d}x\nonumber\\
-\displaystyle\sum_{n=1}^{N'}\int_{\Omega}\frac{u^n-u^{n-1}}{(u^n)^\delta}{\rm d}x&\leq&\frac{\Delta_t}{2}\displaystyle\sum_{n=1}^{N'}\left[\int_{\Omega}(h^n)^2{\rm d}x+\int_{\Omega}(\frac{u^n-u^{n-1}}{\Delta_t})^2{\rm d}x\right]\label{seconden}
\end{eqnarray}
which implies that
\begin{eqnarray}
\frac{\Delta_t}{2}\displaystyle\sum_{n=1}^{N'}\int_{\Omega}(\frac{u^n-u^{n-1}}{\Delta_t})^2{\rm d}x&+&\displaystyle\sum_{n=1}^{N'}\int_{\Omega}|\nabla u^n|^{p-2}\nabla u^n\cdot\nabla(u^n-u^{n-1}){\rm d}x\nonumber \\
&-&\displaystyle\sum_{n=1}^{N'}\int_{\Omega}\frac{u^n-u^{n-1}}{(u^n)^\delta}{\rm d}x\leq|\Omega|\frac{T}{2}\|h\|^2_{L^\infty(Q_T)}\label{esti4b}.
\end{eqnarray}
From the convexity of the terms $\int_{\Omega}|\nabla u|^p{\rm d}x$ and $-\frac{1}{1-\delta}\int_{\Omega}u^{1-\delta}{\rm d}x$ we derive the following estimates:
\begin{equation}
\label{ineqconvexity}
\begin{aligned}
\frac{1}{p}\left[\int_{\Omega}|\nabla u^n|^p{\rm d}x-\int_{\Omega}|\nabla u^{n-1}|^p{\rm d}x\right]&\leq \int_{\Omega}|\nabla u^n|^{p-2}\nabla u^n\nabla(u^n-u^{n-1}){\rm d}x,\\
 \frac{1}{1-\delta}\left [\int_{\Omega}(u^{n-1})^{1-\delta}{\rm d}x-\int_{\Omega}(u^n)^{1-\delta}{\rm d}x\right ]&\leq -\int_{\Omega}\frac{u^n-u^{n-1}}{(u^n)^\delta}{\rm d}x.
\end{aligned}
\end{equation}
Therefore, gathering the estimates \eqref{esti4b} and \eqref{ineqconvexity}, we get 
\begin{eqnarray}
\frac{\Delta_t}{2}\displaystyle\sum_{n=1}^{N'}\int_{\Omega}\big{(}\frac{u^n-u^{n-1}}{\Delta_t}\big{)}^2{\rm d}x&+&\frac{1}{p}\left[\int_{\Omega}|\nabla u^{N'}|^p{\rm d}x-\int_{\Omega}|\nabla u_0|^p{\rm d}x\right]+\nonumber\\
 \frac{1}{1-\delta}\bigg{[}\int_{\Omega}(u_0)^{1-\delta}{\rm d}x&-&\int_{\Omega}(u^{N'})^{1-\delta}{\rm d}x\bigg{]}\leq |\Omega|\frac{T}{2}\|h\|^2_{L^\infty(Q_T)}\label{secondenergyeqter}.
\end{eqnarray}
The above expression together with $\int_{\Omega}(u^n)^{1-\delta}{\rm d}x\leq \max\left\{\int_{\Omega}(\bar{u})^{1-\delta}{\rm d}x,\,\int_{\Omega}(\underline{u})^{1-\delta}{\rm d}x\right\}$ yields 
\begin{equation}
\label{equicont}
\frac{\partial\tilde{u}_{\Delta_t}}{\partial t}\;\mbox{ is bounded in }\;L^2(Q_T)\;\mbox{uniformly in }\,\Delta_t,
\end{equation}
\begin{equation}
\label{sobolevbound}
u_{\Delta_t},\,\tilde{u}_{\Delta_t}\mbox{ are bounded in }L^\infty(0,T;\,W^{1,p}_0(\Omega))\;\mbox{ uniformly in }\Delta_t.
\end{equation}
Furthermore, from above there exists $C>0$ independent of $\Delta_t$ such that
\begin{eqnarray}\label{coincide}
\|u_{\Delta_t}-\tilde{u}_{\Delta_t}\|_{L^\infty(0,T;\, L^2(\Omega))}\leq\displaystyle\max_{n\in\{1,\cdot\cdot,N\}}\|u^n-u^{n-1}\|_{L^2(\Omega)}\leq C(\Delta_t)^{\frac{1}{2}}.
\end{eqnarray}
Therefore, taking $N\to \infty$ (which implies that $\Delta_t\to 0^+$), and up to a subsequence, we get from  \eqref{equicont} and \eqref{sobolevbound} that 
there exists $u, v\in L^\infty(0,T;\, W^{1,p}_0(\Omega)\cap L^\infty(\Omega))$ such that $\frac{\partial u}{\partial t}\in L^2(Q_T)$, $u,\,v\in \,{\mathcal C}$ uniformly and as $\Delta_t\to 0^+$,
\begin{equation}
\tilde{u}_{\Delta_t}\wstarconverge u\quad\mbox{in}\; L^\infty(0,T;\,W^{1,p}_0(\Omega)\cap L^\infty(\Omega)), \label{estimate1}
\end{equation}
\begin{equation}
u_{\Delta_t}\wstarconverge v\quad\mbox{in}\; L^\infty(0,T;\,W^{1,p}_0(\Omega)\cap L^\infty(\Omega)),\\ \label{estimate2}
\end{equation}
\begin{equation}
\frac{\partial\tilde{u}_{\Delta_t}}{\partial t}\rightharpoonup \frac{\partial u}{\partial t}\quad\mbox{in}\; L^2(Q_T).\label{estimate3}
\end{equation}
From \eqref{coincide}, it follows that $u\equiv\, v$.
Moreover, from \eqref{controluniform}, it follows that $\underline{u}\leq u \leq \bar{u}$. Therefore, $u\in {\bf V}(Q_T)$. 
\par 
Next, let us prove that $u$ satisfies (in the sense of Definition \ref{Def0}) the first equation in $(S_t)$. 
Using the boundedness of $\frac{\partial \tilde{u}_{\Delta_t}}{\partial t}$ in $ L^2(Q_T)$ given by \eqref{equicont}, we first get that $\{\Tilde u_{\Delta_t}\}_{\Delta_t}$ is equicontinuous in $C(0,T;\,L^q(\Omega))$ for $1\leq q\leq 2$, and thus with $\underline{u}\leq \tilde{u}_{\Delta_t} \leq \overline{u}$  and the interpolation inequality $\|\cdot \|_r\leq\|\cdot \|_\infty^\alpha\|\cdot \|_2^{1-\alpha}$, $\frac{1}{r}=\frac{\alpha}{\infty}+\frac{1-\alpha}{2}$, we obtain that $\{\Tilde u_{\Delta_t}\}_{\Delta_t}$ is equicontinuous in $C(0,T;\,L^q(\Omega))$ for any $1<q<+\infty$. Moreover, since $\{\Tilde u_{\Delta_t}\}_{\Delta_t}$ is a bounded family of $W_0^{1,p}(\Omega)$ which is compactely embedded in $L^q(\Omega)$ for $1<q<\frac{{\rm N}p}{{\rm N}-p}$, and from  Ascoli-Arzela Theorem, and using again the interpolation inequality, we get as $\Delta_t\to 0^+$ that up to a subsequence
\begin{equation}\label{compactness}
\tilde{u}_{\Delta_t}\rightarrow\, u\quad\mbox{in}\; C( 0,T;\,L^q(\Omega)),\;\forall\, q>1,
\end{equation}
and then, from \eqref{coincide} (with the interpolation inequality for $q>2)$, it follows that
\begin{equation}\label{compactnessb}
u_{\Delta_t}\rightarrow\, u\quad\mbox{in}\; L^\infty( 0,T;\,L^q(\Omega)),\;\forall\, q>1
\end{equation}
as $\Delta_t\to 0^+$. Thus, multiplying \eqref{distributions} by $(u_{\Delta_t}-u)$ and using \eqref{compactness}-\eqref{compactnessb}, we get by straightforward calculations:
\begin{eqnarray*}
\int_0^T\int_{\Omega}\left[\frac{\partial\tilde{u}_{\Delta_t}}{\partial t}-\frac{\partial u}{\partial t}\right](\tilde{u}_{\Delta_t}-u){\rm d}x{\rm d}t&-&\int_0^T<\Delta_p u_{\Delta_t}, u_{\Delta_t}-u>{\rm d}t\\
-\int_0^T\int_{\Omega}u_{\Delta_t}^{-\delta}(u_{\Delta_t}-u){\rm d}x{\rm d}t&=&\int_0^T\int_{\Omega}h_{\Delta_t}(u_{\Delta_t}-u){\rm d}x{\rm d}t+o_{\Delta_t}(1).
\end{eqnarray*}
From \eqref{controluniform} and  \eqref{compactness}, we have that 
\begin{eqnarray*}
\int_0^T\int_{\Omega}u_{\Delta_t}^{-\delta}(u_{\Delta_t}-u){\rm d}x{\rm d}t=o_{\Delta_t}(1),
\end{eqnarray*}
and from \eqref{eq2} and \eqref{compactness} we have
\begin{eqnarray*}
\int_0^T\int_{\Omega}h_{\Delta_t}(u_{\Delta_t}-u){\rm d}x{\rm d}t=o_{\Delta_t}(1).
\end{eqnarray*}
Then,
\begin{eqnarray*}
\frac{1}{2}\int_{\Omega}|\tilde{u}_{\Delta_t}-u|^2(T){\rm d}x-\int_0^T<\Delta_p u_{\Delta_t}-\Delta_p u,u_{\Delta_t}-u>{\rm d}t=o_{\Delta_t}(1).
\end{eqnarray*}
Therefore, using \eqref{compactnessb}, $u\not\equiv\, 0$ and the inequality \eqref{inegaliteAlg} with $w=u_{\Delta_t}(t)$ and $v=u(t)$
we obtain that $u_{\Delta_t}\rightarrow u$ in $L^p(0,T;\,W^{1,p}_0(\Omega))$ and
\begin{equation}
-\Delta_pu_{\Delta_t}\rightarrow -\Delta_p u\quad\mbox{in}\; L^{\frac{p}{p-1}}(0,T;\,W^{-1,\frac{p}{p-1}}(\Omega)). \label{eq5}
\end{equation}
Moreover, from \eqref{controluniform}, for any $\phi\in W^{1,p}_0(\Omega)$
\begin{eqnarray*}
\left|\int_{\Omega}\frac{\phi}{(u_{\Delta_t})^\delta}{\rm d}x\right|\leq\int_{\Omega}\frac{|\phi|}{(\underline{u})^\delta}{\rm d}x\leq \left(\int_{\Omega}\left(\frac{d(x)}{(\underline{u})^\delta}{\rm d}x\right)^{\frac{p}{p-1}}\right)^{\frac{p-1}{p}}\times\left (\int_{\Omega}\left(\frac{|\phi|}{d(x)}\right)^p{\rm d}x\right)^{\frac{1}{p}}
\end{eqnarray*}
and since $\delta<2+\frac{1}{p-1}$
\begin{eqnarray*}
\int_{\Omega}\left(\frac{d(x)}{\underline{u}^\delta}{\rm d}x\right)^{\frac{p}{p-1}}<+\infty.
\end{eqnarray*}
Then, from the Hardy Inequality and from the Lebesgue Theorem, we obtain
\begin{equation}
\frac{1}{{(u_{\Delta_t})}^\delta}\rightarrow\, \frac{1}{u^\delta}\quad\mbox{in}\; L^\infty(0,T;\,W^{-1,\frac{p}{p-1}}(\Omega)).\label{eq4}
\end{equation}
Therefore, from \eqref{eq2bis}, \eqref{compactness}, \eqref{compactnessb}, \eqref{eq5}, \eqref{eq4} we deduce that $u\in {\bf V}(Q_T)$ satisfies $({\rm P}_t)$.
\par Let us now show that $u$ is the unique weak solution  such that $u(t)\in {\mathcal C}$, $\forall \, t\in [0,T]$. Assume that there exists $v\not\equiv\, u$ a weak solution to $({\rm P}_t)$ satisfying $v(t)\in {\mathcal C}$, $\forall \, t\in [0,T]$. Then,
\begin{eqnarray*}
 \int_0^T\int_{\Omega}\frac{\partial (u-v)}{\partial t}(u-v){\rm d}x{\rm d}t&-&\int_0^T<\Delta_pu-\Delta_p v, u-v>{\rm d}t\\
&-&\int_0^T\int_{\Omega}(u^{-\delta}-v^{-\delta})(u-v){\rm d}x{\rm d}t=0.
\end{eqnarray*}
The above equality together with $u(0)=v(0)$ imply $u\equiv\, v$.
\par To complete the proof of Theorem~\ref{3}, let us prove $u\in C([0,T];W^{1,p}_0(\Omega))$ and \eqref{secondenergyeq}. First, we observe that since $u\in C([0,T],L^2(\Omega))$ and $u\in L^\infty(0,T;\, W^{1,p}_0(\Omega))$, it follows that $u\, :t\in[0,T]\to W^{1,p}_0(\Omega)$ is weakly continuous and then that $u(t_0)\in W^{1,p}_0(\Omega)$ and $\|u(t_0)\|_{W^{1,p}_0(\Omega)}\leq \displaystyle\liminf_{t\to t_0}\|u(t)\|_{W^{1,p}_0(\Omega)}$ for all $t_0\in [0,T]$. From \eqref{eq2bis}, \eqref{secondenergyeqter} (with $\sum_{n=N''}^{N'}$ for $1\leq N''\leq N' $ instead of $\sum_{n=1}^{N'}$), \eqref{coincide} and \eqref{eq4}, it follows that $u$ satisfies for any $t\in [t_0,T]$:
\begin{eqnarray}
\int_{t_0}^t\int_{\Omega}(\frac{\partial u}{\partial t})^2{\rm d}x{\rm d}s+\frac{1}{p}\int_{\Omega}|\nabla u(t)|^p{\rm d}x-\frac{1}{1-\delta}\int_{\Omega}u(t)^{1-\delta}{\rm d}x\leq \int_{t_0}^t\int_{\Omega}h\frac{\partial u}{\partial t}{\rm d}x{\rm d}s+\nonumber\\
\frac{1}{p}\int_{\Omega}|\nabla u(t_0)|^p{\rm d}x-\frac{1}{1-\delta}\int_{\Omega}u(t_0)^{1-\delta}{\rm d}x.\label{secinequality}
\end{eqnarray}
From \eqref{secinequality} and Lebesgue Theorem, it follows that $$\displaystyle\limsup_{t\to {t_0}^+}\|u(t)\|_{W^{1,p}_0(\Omega)}\leq \|u(t_0)\|_{W^{1,p}_0(\Omega)}$$ and then $u(t)\to u(t_0)$ in $W^{1,p}_0(\Omega)$ as $t\to {t_0}^+$ which implies that $u$ is right\--continuous on $[0,T]$. Let $t>t_0$. Let us now prove the left-continuity. Let $0< k\leq t-t_0$. Multiplying \eqref{linearheateq} by $\tau_k(u)(s)\eqdef \frac{u(s+k)-u(s)}{k}$ and integrating over $(t_0,t)\times \Omega$, we get, using convexity arguments, that
\begin{equation}
\begin{aligned}
&\int_{t_0}^t\int_{\Omega}\frac{\partial u}{\partial t}\tau_k(u){\rm d}x{\rm d}s+\frac{1}{kp}\int_{t_0}^t\int_{\Omega}(|\nabla u(s+k)|^p-|\nabla u(s)|^p){\rm d}x{\rm d}s \nonumber \\
& -\frac{1}{(1-\delta)k}\int_{t_0}^t\int_{\Omega}(u^{1-\delta}(s+k)-u^{1-\delta}(s)){\rm d}x{\rm d}s\geq 
\int_{t_0}^t\int_{\Omega}h\tau_k(u){\rm d}x{\rm d}s.
\end{aligned}
\end{equation}
Then, 
\begin{eqnarray*}
\int_{t_0}^t\int_{\Omega}\frac{\partial u}{\partial t}\tau_k(u){\rm d}x{\rm d}s+\frac{1}{kp}(\int_t^{t+k}\int_{\Omega}|\nabla u(s)|^p{\rm d}x{\rm d}s-\int_{t_0}^{t_0+k}\int_{\Omega}|\nabla u(s)|^p{\rm d}x{\rm d}s)\\
-\frac{1}{(1-\delta)k}(\int_t^{t+k}\int_{\Omega}(u^{1-\delta}(s){\rm d}x{\rm d}s-\int_{t_0}^{t_0+k} u^{1-\delta}(s)){\rm d}x{\rm d}s)\geq 
\int_{t_0}^{t}\int_{\Omega}h\tau_k(u){\rm d}x{\rm d}s.
\end{eqnarray*}
Since $u$ is right\--continuous in $W^{1,p}_0(\Omega)$ and by Lebesgue theorem, we get as $k\to 0^+$
\begin{eqnarray*}
\frac{1}{k}\int_t^{t+k}\int_{\Omega}|\nabla u(s)|^p{\rm d}x{\rm d}s&\to& \int_{\Omega}|\nabla u(t)|^p{\rm d}x,\\
\frac{1}{k}\int_{t_0}^{t_0+k}\int_{\Omega}|\nabla u(s)|^p{\rm d}x{\rm d}s&\to& \int_{\Omega}|\nabla u(t_0)|^p{\rm d}x,\\
\frac{1}{k}\int_t^{t+k}\int_{\Omega}u^{1-\delta}(s){\rm d}x{\rm d}s&\to&\int_{\Omega}u^{1-\delta}(t){\rm d}x,\\
\frac{1}{k}\int_{t_0}^{t_0+k}\int_{\Omega}u^{1-\delta}(s)){\rm d}x{\rm d}s&\to& \int_{\Omega}u(t_0)^{1-\delta}{\rm d}x.
\end{eqnarray*}
From the above estimates, we get as $k\to 0^+$
\begin{eqnarray*}
&\displaystyle \int_{t_0}^t\int_{\Omega}(\frac{\partial u}{\partial t})^2{\rm d}x{\rm d}s+\frac{1}{p}\int_{\Omega}|\nabla u(t)|^p{\rm d}x-\frac{1}{1-\delta}\int_{\Omega}u(t)^{1-\delta}{\rm d}x\geq \\
&\displaystyle \int_{t_0}^t\int_{\Omega}h\frac{\partial u}{\partial t}{\rm d}x{\rm d}s+\frac{1}{p}\int_{\Omega}|\nabla u(t_0)|^p{\rm d}x-\frac{1}{1-\delta}\int_{\Omega}u(t_0)^{1-\delta}{\rm d}x,
\end{eqnarray*}
which implies together with \eqref{secinequality} that the above inequality is in fact an equality. Then together with the fact that $t\to \int_{\Omega}u^{1-\delta}(t){\rm d}x$ is continuous, it follows that $u\in C([0,T],W^{1,p}_0(\Omega))$. Finally \eqref{secondenergyeq} is obtained by setting $t_0=0$.
\end{proof}
We end this section by proving proposition~\ref{regsobolev}.
\begin{proof}
Assume that $u_0\in \overline{{\mathcal D}(A)}^{L^\infty(\Omega)}$, where $A$ and ${\mathcal D}(A)$ are defined in \eqref{def-domaine-op}. From \eqref{monotonicity2}, $A$ is m-accretive in $L^\infty(\Omega)$. Indeed, for $f,g\in L^\infty(\Omega)$ and $\lambda>0$, set $u$ and $v\in {\mathcal D}(A)$ (given by Theorem~\ref{1}) satisfying
\begin{equation}
\label{accretive}
\begin{aligned}
& u-\lambda Au=f\quad\mbox{in }\,\Omega,\\
& v-\lambda Av=g\quad\mbox{in }\,\Omega.
\end{aligned}
\end{equation}
From \eqref{accretive} and defining $w\eqdef \left(u-v-\|f-g\|_{L^\infty(\Omega)}\right)^+$, we get that 
\begin{equation}
\nonumber
\int_{\Omega}w^2{\rm d}x+\lambda<Au-Av,w>_{W^{-1,p'}(\Omega),W^{1,p}_0(\Omega)}\leq 0.
\end{equation}
From \eqref{inegaliteAlg} or \eqref{inegaliteAlg2} it follows that $u-v\leq\|f-g\|_{L^\infty(\Omega)}$ and reversing the roles of $u$ and $v$, we get that $\|u-v\|_{L^\infty(\Omega)}\leq\|f-g\|_{L^\infty(\Omega)}$. Then Proposition~\ref{regsobolev}
can be obtained from \cite[Chap.4, Th. 4.2 and Th. 4.4]{Ba}. However, in order to be complete and self-contained, let us briefly explain the argument. In the following, $\|\cdot\|_\infty$ stands for the norm of $L^\infty(\Omega)$. For $z\in {\mathcal D}(A)$ and $r,k$ in $L^\infty(Q_T)$ let define 
$$
\varphi(t,s)=\|r(t)-k(s)\|_{\infty}\quad (t,s)\in [0,T]\times [0,T],
$$
and
\begin{eqnarray*}
b(t,r,k)&=&\|u_0-z\|_{\infty}+ \|v_0-z\|_{\infty}+|t|\|A z \|_{\infty}\\
&+&\int_0^{t^+}\|r(\tau)\|_{\infty} {\rm d}\tau+\int_0^{t^-}\|k(\tau)\|_{\infty} {\rm d}\tau,\;t\in [-T,T],
\end{eqnarray*}
and
$$
\Psi(t,s)=b(t-s,r,k)+
\left\{
\begin{array}{ll}
\displaystyle \int_0^{s}\varphi(t-s+\tau,\tau){\rm d}\tau & \mbox{ if } 0\leq s\leq t\leq T,\\
\displaystyle \int_0^{t}\varphi(\tau,s-t+\tau) {\rm d}\tau &  \mbox{ if } 0\leq t\leq s\leq T,
\end{array}
\right.
$$
the solution of
\begin{equation}\label{eqPsi}
\left\{
\begin{array}{rcll}
\displaystyle \frac{\partial \Psi}{\partial t}(t,s)+\frac{\partial \Psi}{\partial s}(t,s)&=&\displaystyle\varphi(t,s)\; &(t,s)\in [0,T]\times [0,T],\\
\displaystyle \Psi(t,0)&=& \displaystyle b(t,r,k)\; &t\in [0,T],\\
 \displaystyle  \Psi(0,s)& =& \displaystyle b(-s,r,k)\; &s\in [0,T].
\end{array}
\right.
\end{equation}
Moreover, let denote by $(u_\epsilon^n)$ the solution of \eqref{scheme} with $\Delta_t=\epsilon$, $h=r$, $r^n=\frac{1}{\epsilon}\int_{(n-1)\epsilon}^{n\epsilon}r(\tau,\cdot)d\tau$ and  $(u_\eta^n)$ the solution of \eqref{scheme} with  $\Delta_t=\eta$, $h=k$, $k^n=\frac{1}{\eta}\int_{(n-1)\eta}^{n\eta}k(\tau,\cdot)d\tau$ respectively. For $(n,m)\in \mathbb{N}^*$ elementary calculations leads to
\begin{eqnarray*}
u_\epsilon^n-u_\eta^m+\frac{\epsilon\eta}{\epsilon+\eta}(A u_\epsilon^n-A u_\eta^m)&=&\frac{\eta}{\epsilon+\eta}(u_\epsilon^{n-1}-u_\eta^m)\\
+\frac{\epsilon}{\epsilon+\eta}(u_\epsilon^n-u_\eta^{m-1})&+&\frac{\epsilon\eta}{\epsilon+\eta}(r^n-k^m),
\end{eqnarray*}
and since $A$ is m-accretive in $L^\infty(\Omega)$ we first verify that $\Phi_{n,m}^{\epsilon,\eta}=\|u_\epsilon^n-u_\eta^m\|_\infty$ obeys
\begin{eqnarray*}
\Phi_{n,m}^{\epsilon,\eta}&\leq& \frac{\eta}{\epsilon+\eta}\Phi_{n-1,m}^{\epsilon,\eta}+\frac{\epsilon}{\epsilon+\eta}\Phi_{n,m-1}^{\epsilon,\eta}+\frac{\epsilon\eta}{\epsilon+\eta}\|r^n-k^m\|_\infty,\\
\Phi_{n,0}^{\epsilon,\eta}&\leq & b(t_n,r_\epsilon,k_\eta)\quad\mbox{ and }\quad  \Phi_{0,m}^{\epsilon,\eta}\leq  b(-s_m ,r_\epsilon,k_\eta),
\end{eqnarray*}
and thus, with an easy inductive argument, that $\Phi_{n,m}^{\epsilon,\eta}\leq \Psi_{n,m}^{\epsilon,\eta}$ where $\Psi_{n,m}^{\epsilon,\eta}$ satisfies
\begin{eqnarray*}
\Psi_{n,m}^{\epsilon,\eta}&=& \frac{\eta}{\epsilon+\eta}\Psi_{n-1,m}^{\epsilon,\eta}+\frac{\epsilon}{\epsilon+\eta}\Psi_{n,m-1}^{\epsilon,\eta}+\frac{\epsilon\eta}{\epsilon+\eta}\|h_\epsilon^n-h_\eta^m\|_\infty,\\
\Psi_{n,0}^{\epsilon,\eta}&= & b(t_n,r_\epsilon,k_\eta)\quad\mbox{ and }\quad  \Psi_{0,m}^{\epsilon,\eta}=  b(-s_m ,r_\epsilon,k_\eta).
\end{eqnarray*}
For $(t,s)\in (t_{n-1},t_n)\times (s_{m-1},s_m)$ let set $\varphi^{\epsilon,\eta}(t,s)=\|r_\epsilon(t)-k_\eta(s)\|_{\infty}$ and $\Psi^{\epsilon,\eta}(t,s)=\Psi_{n,m}^{\epsilon,\eta}$,  $b_{\epsilon,\eta}(t,r,k)=b(t_n,r_\epsilon,k_\eta)$ and $b_{\epsilon,\eta}(-s,r,k)=b(-s_m,r_\epsilon,k_\eta)$. Then $\Psi^{\epsilon,\eta}$ satisfies the following discrete version of \eqref{eqPsi}:
\begin{eqnarray*}
\frac{\Psi^{\epsilon,\eta}(t,s)-\Psi^{\epsilon,\eta}(t-\epsilon,s)}{\epsilon}+\frac{\Psi^{\epsilon,\eta}(t,s)-\Psi^{\epsilon,\eta}(t,s-\eta)}{\eta}&=& \varphi^{\epsilon,\eta}(t,s),\\
\Psi^{\epsilon,\eta}(t,0)= b_{\epsilon,\eta}(t,r,k)\quad\mbox{ and }\quad  \Psi^{\epsilon,\eta}(0,s)&=&  b_{\epsilon,\eta}(s,r,k),
\end{eqnarray*}
and from $b_{\epsilon,\eta}(\cdot,r,k)\to b(\cdot,r,k)$ in $L^\infty([0,T])$ and $\varphi^{\epsilon,\eta}\to \varphi $ in $L^\infty([0,T]\times[0,T])$ we deduce that $\rho_{\epsilon,\eta}=\|\Psi^{\epsilon,\eta}-\Psi\|_{L^\infty([0,T]\times [0,T])} \to 0$ as $(\epsilon,\eta)\to 0$ (see for instance  \cite[Chap.4, Lemma~4.3]{Ba}). Then from
\begin{equation}
\|u_\epsilon(t)-u_\eta(s)\|_{\infty}=\Phi^{\epsilon, \eta}(t,s)\leq \Psi^{\epsilon, \eta}(t,s)\leq \Psi(t,s)+\rho_{\epsilon,\eta},\label{eqconv}
\end{equation}
we obtain with $t=s$, $r=k=h$, $v_0=u_0$:
$$
\|u_\epsilon(t)-u_\eta(t)\|_{\infty}\leq 2\|u_0-z\|_{\infty}+\rho_{\epsilon, \eta},
$$
and since $z$ can be chosen in $\mathcal{D}(A)$ arbitrary close to $u_0$, we deduce that $u_\epsilon$ is a Cauchy sequence in $L^\infty(Q_T)$ and then that $u_\epsilon \to u$ in $L^\infty(Q_T)$. Thus, passing to the limit in \eqref{eqconv} with $r=k=h$, $v_0=u_0$ we obtain 
\begin{eqnarray}
\|u(t)-u(s)\|_\infty &\leq& 2\|u_0-z\|_{\infty}+|t-s|\|A z \|_{\infty} +\int_0^{|t-s|}\|h(\tau)\|_{\infty}{\rm d}\tau \nonumber \\
&+&\int_{0}^{\max(t,s)}\|h(|t-s|+\tau)-h(\tau)\|_{\infty}{\rm d}\tau,\label{uCont}
\end{eqnarray}
which, together with the density ${\mathcal D}(A)$ in $L^\infty(\Omega)$ and  $h\in L^1(0,T;L^\infty(\Omega))$, yields $u\in C([0,T];L^\infty(\Omega))$. Analogously, from \eqref{eqconv} with $\varepsilon=\eta=\Delta_t$, $r=k=h$, $v_0=u_0$ and $t=s+\Delta_t$ we deduce that
\begin{eqnarray*}
\|u_{\Delta_t}(t)-\Tilde u_{\Delta_t}(t)\|_{\infty}&\leq& 2\| u_{\Delta_t}(t)- u_{\Delta_t}(t-\Delta_t)\|_{\infty}\\
& \leq& 4\|u_0-z\|_{\infty}+2\Delta_t\|A z \|_{\infty} +2\int_0^{\Delta_t } \|h(\tau)\|_{\infty}{\rm d}\tau\\
&+&2\int_{0}^{t}\|h(\Delta_t+\tau)-h(\tau)\|_{\infty}{\rm d}\tau,
\end{eqnarray*}
which gives the limit $\Tilde u_{\Delta_t}\to u$ in $C([0,T];L^\infty(\Omega))$ as $\Delta_t\to 0^+$. Note that since $\Tilde u_{\Delta_t}\in C([0,T];C_0(\overline{\Omega})$, the uniform limit $u$ belongs to $C([0,T];C_0(\overline{\Omega}))$. 
Moreover, passing to the limit in \eqref{eqconv} with $t=s$ we obtain
$$
\|u(t)-v(t)\|_{\infty}\leq \|u_0-z\|_{\infty}+\|v_0-z\|_{\infty}+\int_0^t\|r(\tau)-k(\tau)\|_{\infty} {\rm d}\tau,
$$
and \eqref{estimation} follows because we can choose $z$ arbitrary close to $v_0$. 
Finally, if $A u_0\in L^\infty(\Omega)$ and $h\in W^{1,1}(0,T;L^\infty(\Omega))$ and if we assume (without loss of generality) that $t>s$ then with $z=v_0=u(t-s)$ and $(r,k)=(h,h(\cdot+t-s))$ in the last above inequality we obtain 
\begin{eqnarray}
\|u(t)-u(s)\|_{\infty}&\leq& \|u_0-u(t-s)\|_{\infty}+\int_{0}^{s}\|h(\tau)-h(\tau+t-s)\|_{\infty} {\rm d}\tau\nonumber\\
&\leq &\int_0^{t-s}\|A u_0-h(\tau)\|_{\infty}{\rm d}\tau+\int_{0}^{s}\|h(\tau)-h(\tau+t-s)\|_{\infty} {\rm d}\tau\nonumber\\
&\leq & (t-s)\|A u_0-h(0)\|_{\infty}+\int_{0}^{t-s}\|h(0)-h(\tau)\|_{\infty} {\rm d}\tau\nonumber\\
&+& \int_{0}^{s}\|h(\tau)-h(\tau+t-s)\|_{\infty} {\rm d}\tau\nonumber\\
&\leq & (t-s)\left((\|A u_0-h(0)\|_{\infty}+\int_0^{T}\left \|\frac{{\rm d}h(\tau)}{{\rm d}t}\right\|_{\infty} {\rm d}\tau\right)\label{mainestimate}.
\end{eqnarray}
Note that the second above inequality is obtained from  \eqref{estimation} with $v=u_0$, $k=A u_0$ and the last above inequality is obtained from $h(\tau)-h(\tau+t-s)=\int_\tau^{\tau+t-s}\frac{{\rm d}h(\tau)}{{\rm d}t}{\rm d}\sigma$ together with Fubini's Theorem. Dividing the expression \eqref{mainestimate} by $|t-s|$, we get that $u$ is a Lipschitz function  and since $\frac{\partial u}{\partial t}\in L^2(Q_T)$, passing to the limit $|t-s|\to 0$ we obtain that $\frac{u(t)-u(s)}{t-s}\to \frac{\partial u}{\partial t}$ as $s\to t$  weakly in $L^2(Q_T)$ and *-weakly in $L^\infty(Q_T)$. Furthermore,
\begin{equation}
\nonumber
\left\|\frac{\partial u}{\partial t}\right\|_{\infty}\leq \displaystyle\liminf_{s\to t}\frac{\|u(t)-u(s)\|_{\infty}}{|t-s|}.
\end{equation}
Therefore, we get $u\in W^{1,\infty}(0,T;L^\infty(\Omega))$ as well as inequality \eqref{estimationbiss}. 
\end{proof}
\section{Proofs of Theorems~\ref{2bis}, \ref{3bis}, \ref{3ter} and of Proposition~\ref{regsobolevbis}}
\label{nonlinearcase}
We start this section by the proof of Theorem~\ref{2bis}. We use the following preliminary result which gives the validity of the weak comparison principle for sub\--homogeneous problems and then forces uniqueness of solutions.
\begin{theorem}
\label{WCP}
Let $1<r<+\infty$, $g:\Omega\times\R^+\rightarrow \R$ be a Caratheodory function bounded  below such that  $\frac{g(x,s)}{s^{r-1}}$ is a decreasing function in $\R^+$ for a.e. $x\in\Omega$. 
Let $u\in L^{\infty}(\Omega)\cap W^{1,r}_0(\Omega)$, $v\in L^\infty(\Omega)\cap W^{1,r}_0(\Omega)$ satisfy $u>0$, $v>0$ in $\Omega$, $\int_{\Omega}{u}^{1-\delta}\,{\rm d}x<+\infty$, $\int_{\Omega}{v}^{1-\delta}\,{\rm d}x<+\infty$ and
\begin{equation}
\nonumber
-\Delta_r u\leq \frac{1}{u^\delta}+g(x,u)\;\mbox{weakly in}\;W^{-1,\frac{r}{r-1}}(\Omega)
\end{equation}
and
\begin{equation}
\nonumber
-\Delta_rv\geq \frac{1}{v^\delta}+g(x,v)\;\mbox{weakly in}\; W^{-1,\frac{r}{r-1}}(\Omega).
\end{equation}
Suppose in addition that there exists  a positive function $u_0\in L^\infty(\Omega)$ and positive constants $c$, $C$ such that $cu_0\leq u,v\leq Cu_0$ and
\begin{equation}
\label{sublinearity+}
\int_\Omega \vert g(x,c u_0)\vert u_0{\rm d}x<+\infty, \quad \int_\Omega \vert g(x,C u_0)\vert u_0{\rm d}x<+\infty.
\end{equation}
Then, $u\leq v$.
\end{theorem}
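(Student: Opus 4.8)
The plan is to use the classical Díaz–Saá-type argument adapted to the singular setting: we test the two differential inequalities against suitable multiples of $u-v$ (or rather against $\frac{u^r-v^r}{u^{r-1}}$ and $\frac{v^r-u^r}{v^{r-1}}$), subtract, and exploit both the monotonicity of $s\mapsto g(x,s)/s^{r-1}$ and the strict monotonicity hidden in the $p$-Laplacian comparison identity. Since $cu_0\le u,v\le Cu_0$ with $u_0>0$, the quotients $u/v$ and $v/u$ are bounded above and below, so all the test functions below lie in $W^{1,r}_0(\Omega)\cap L^\infty(\Omega)$ and the singular term $\int_\Omega u^{-\delta}\,\cdot$ makes sense by the hypotheses $\int_\Omega u^{1-\delta}<\infty$, $\int_\Omega v^{1-\delta}<\infty$ together with the ordering $cu_0\le u,v$; likewise \eqref{sublinearity+} guarantees the $g$-terms are integrable after multiplication by the relevant bounded quotient times $u_0$.

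First I would set $w=(u-v)^+$ and, more precisely, work with the test functions
\[
\varphi_1=\frac{\big((u^r-v^r)^+\big)}{u^{r-1}},\qquad
\varphi_2=\frac{\big((u^r-v^r)^+\big)}{v^{r-1}},
\]
which vanish precisely on $\{u\le v\}$ and are admissible by the cone bounds. Plugging $\varphi_1$ into the inequality for $u$ and $\varphi_2$ into the inequality for $v$, then subtracting, the right-hand sides combine into
\[
\int_{\{u>v\}}\Big(\tfrac{1}{u^\delta}-\tfrac{1}{v^\delta}\Big)(u^r-v^r)\,{\rm d}x
+\int_{\{u>v\}}\Big(\tfrac{g(x,u)}{u^{r-1}}-\tfrac{g(x,v)}{v^{r-1}}\Big)(u^r-v^r)\,{\rm d}x,
\]
and on $\{u>v\}$ both integrands are $\le 0$: the first because $s\mapsto s^{-\delta}$ is decreasing, the second because $s\mapsto g(x,s)/s^{r-1}$ is decreasing by hypothesis (and because $u^r-v^r>0$ there). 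Hence the sum of the right-hand sides is $\le 0$. On the left-hand side one obtains, after expanding,
\[
\int_{\{u>v\}}\Big(|\nabla u|^{r-2}\nabla u-|\nabla v|^{r-2}\nabla v\Big)\cdot\nabla\!\Big(u-v\Big)\,\omega\,{\rm d}x
\]
plus nonnegative Picone-type remainder terms; the key algebraic fact is the Díaz–Saá / hidden-convexity inequality stating that
\[
\int_\Omega \Big(|\nabla u|^{r-2}\nabla u-|\nabla v|^{r-2}\nabla v\Big)\cdot\nabla\!\left(\frac{u^r-v^r}{u^{r-1}}\;;\;\frac{u^r-v^r}{v^{r-1}}\right)\,{\rm d}x\ge 0,
\]
with equality iff $u$ and $v$ are proportional. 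Combining, the whole expression is simultaneously $\ge 0$ and $\le 0$, so it equals $0$, forcing $(u^r-v^r)^+\equiv 0$, i.e. $u\le v$.

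The main obstacle is purely technical rather than conceptual: justifying that the test functions $\varphi_1,\varphi_2$ are genuinely admissible (they must lie in $W^{1,r}_0(\Omega)$), that the integrations by parts against the distributional inequalities are licit, and that all four integrals — the $p$-Laplacian bilinear terms, the singular terms $\int u^{-\delta}\varphi_i$, and the $g$-terms — are absolutely convergent. This is exactly where the cone condition $cu_0\le u,v\le Cu_0$, the finiteness $\int_\Omega u^{1-\delta},\int_\Omega v^{1-\delta}<\infty$, the Hardy inequality, and the extra integrability assumption \eqref{sublinearity+} are used in concert; one typically carries out the computation first with $(u^r-v^r)^+$ truncated or with $u,v$ replaced by $u+\eps,v+\eps$ and then passes to the limit $\eps\to0^+$ by monotone/dominated convergence. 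Once admissibility is secured, the sign analysis above closes the argument immediately.
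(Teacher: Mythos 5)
Your strategy is essentially the paper's: after the shift $u_\epsilon=u+\epsilon$, $v_\epsilon=v+\epsilon$, test the two inequalities with the Di\'az--Sa\'a functions built from $\frac{(u_\epsilon^r-v_\epsilon^r)^+}{u_\epsilon^{r-1}}$ and $\frac{(u_\epsilon^r-v_\epsilon^r)^+}{v_\epsilon^{r-1}}$, use the bounds $cu_0\leq u,v\leq Cu_0$, $\int_\Omega u^{1-\delta}\,{\rm d}x<\infty$, $\int_\Omega v^{1-\delta}\,{\rm d}x<\infty$ and \eqref{sublinearity+} for admissibility and dominated convergence, then play the sign of the zeroth-order terms against the hidden convexity of the gradient terms. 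Up to the cosmetic slip that the combined singular term is $\int_{\{u>v\}}(u^r-v^r)\bigl(u^{-(\delta+r-1)}-v^{-(\delta+r-1)}\bigr)\,{\rm d}x$ rather than $\int(u^{-\delta}-v^{-\delta})(u^r-v^r)\,{\rm d}x$ (same sign), this is the paper's computation.

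The genuine gap is in the conclusion. From ``LHS $\geq 0$, RHS $\leq 0$, hence everything vanishes'' you infer $(u^r-v^r)^+\equiv 0$, justified by ``equality in the hidden-convexity inequality iff $u$ and $v$ are proportional''. But proportionality is all that equality gives, and it does not exclude $u=kv$ with $k>1$ on a connected component of $\{u>v\}$: equality in the Picone/Di\'az--Sa\'a gradient part holds identically for $u=kv$, whatever $k$. This is precisely why the paper, after deducing $|u\nabla v-v\nabla u|=0$ a.e.\ on $\Omega^+=\{u>v\}$ via Fatou, still runs an extra energy comparison on each connected component ${\mathcal O}$ of $\Omega^+$ (writing $u=kv$, comparing $\int_{{\mathcal O}}|\nabla u|^r=k^r\int_{{\mathcal O}}|\nabla v|^r$ with the two inequalities, and using that $s\mapsto g(x,s)/s^{r-1}$ is decreasing together with $1-\delta<r$) to force $k\leq 1$. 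Alternatively, your argument can be closed within your own scheme: since the total vanishes and the gradient part is nonnegative, the zeroth-order part must tend to zero, and its singular piece has a strictly negative integrand on $\{u>v\}$ (because $s\mapsto s^{-(\delta+r-1)}$ is strictly decreasing), so Fatou in the limit $\epsilon\to 0^+$ yields $|\{u>v\}|=0$ directly. Either way, an explicit argument is required at this point; as written, the step ``equality $\Rightarrow (u^r-v^r)^+\equiv 0$'' does not follow.
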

\begin{proof}
First, for $\epsilon>0$, we set $u_\epsilon\eqdef u+\epsilon$ and $v_\epsilon\eqdef v+\epsilon$. Following some ideas in {\sc Lindqvist} \cite{Linq} (see also {\sc Dr\'abek-Hern\'andez} \cite{DrHe}) we use the Di\'az-Saa inequality (see {\sc di\'az-Saa} \cite{DiSa}) in the following way:\\
Let 
\begin{equation}
\nonumber
\displaystyle\phi\eqdef\frac{u_\epsilon^{r}-v_\epsilon^{r}}{u_\epsilon^{r-1}}\;\mbox{and}\;\psi\eqdef\frac{v_\epsilon^{r}-u_\epsilon^{r}}{v_\epsilon^{r-1}}.
\end{equation}
Setting $\Omega^+=\left\{x\in\Omega|u(x)>v(x)\right\}$, we have that $\phi\geq 0$ and $\psi\leq 0$ in $\Omega^+$ and
\begin{equation}
\nonumber
\int_{\Omega^+}|\nabla u|^{r-2}\nabla u\nabla\phi\,{\rm d}x\leq\int_{\Omega^+}\frac{\phi}{u^\delta}\,{\rm d}x+\int_{\Omega^+}g(x,u)\phi\,{\rm d}x<+\infty,
\end{equation}
\begin{equation}
\nonumber
\int_{\Omega^+}|\nabla v|^{r-2}\nabla v\nabla\psi\,{\rm d}x\leq\int_{\Omega^+}\frac{\psi}{v^\delta}\,{\rm d}x+\int_{\Omega^+}g(x,v)\psi\,{\rm d}x<+\infty.
\end{equation}
Since 
\begin{equation}
\nonumber
\displaystyle\nabla\phi=\left[1+(r-1)\left(\frac{u_\epsilon}{v_\epsilon}\right)^r\right]\nabla u -r\left(\frac{v_\epsilon}{u_\epsilon}\right)^{r-1}\nabla v
\end{equation}
and
\begin{equation}
\nonumber
\displaystyle\nabla\psi=\left[1+(r-1)\left(\frac{v_\epsilon}{u_\epsilon}\right)^r\right]\nabla v -r\left(\frac{u_\epsilon}{v_\epsilon}\right)^{r-1}\nabla u,
\end{equation}
we have 
\begin{equation}
\nonumber
\begin{aligned}
& \displaystyle\int_{\Omega^+}|\nabla u|^{r-2}\nabla u\nabla\phi\,{\rm d}x
+\displaystyle\int_{\Omega^+}|\nabla v|^{r-2}\nabla v\nabla\psi\,{\rm d}x=\\
& \displaystyle\int_{\Omega^+}\left(|\nabla u_\epsilon|^r\left[1+(r-1)\left(\frac{v_\epsilon}{u_\epsilon}\right)^r\right]
+|\nabla v_\epsilon|^r\left[1+(r-1)\left(\frac{u_\epsilon}{v_\epsilon}\right)^r\right]\right )\,{\rm d}x\\
& =\displaystyle\int_{\Omega^+}(u_\epsilon^r-v_\epsilon^r)(|\nabla \log{u_\epsilon}|^r-|\nabla \log{v_\epsilon}|^r)\,{\rm d}x\\
& -\displaystyle\int_{\Omega^+}rv_\epsilon^r|\nabla \log{u_\epsilon}|^{r-2}\nabla \log{u_\epsilon}(\nabla \log{v_\epsilon}-\nabla \log{u_\epsilon})\,{\rm d}x\\
&  -\displaystyle\int_{\Omega^+}ru_\epsilon^r|\nabla \log{v_\epsilon}|^{r-2}\nabla \log{v_\epsilon}(\nabla \log{u_\epsilon}-\nabla \log{v_\epsilon})\,{\rm d}x.
\end{aligned}
\end{equation}
If $r\geq 2$, then using the following well-known inequality
\begin{equation}
\nonumber
|w_2|^r\geq |w_1|^r+r|w_1|^{r-2}w_1(w_2-w_1)+\frac{|w_2-w_1|^r}{2^{r-1}-1},
\end{equation}
for all points $w_1$ and $ w_2\in\R^{\rm N}$, we get that 
\begin{equation}
\nonumber
\begin{aligned}
& \displaystyle\int_{\Omega^+}|\nabla u|^{r-2}\nabla u\nabla\phi\,{\rm d}x+\displaystyle\int_{\Omega^+}|\nabla v|^{r-2}\nabla v\nabla\psi\,{\rm d}x\\
& \geq \displaystyle\int_{\Omega^+}(u_\epsilon^r-v_\epsilon^r)(|\nabla \log{u_\epsilon}|^r-|\nabla \log{v_\epsilon}|^r)\,{\rm d}x\\
& +\displaystyle\int_{\Omega^+}\left (v_\epsilon^r(|\nabla\log{u_\epsilon}|^r-|\nabla\log{v_\epsilon}|^r)+\frac{|\nabla\log{v_\epsilon}-\nabla\log{u_\epsilon}|^r}{2^{r-1}-1}v_\epsilon^r\right)\,{\rm d}x\\
& +\displaystyle\int_{\Omega^+}\left(u_\epsilon^r(|\nabla\log{v_\epsilon}|^r-|\nabla\log{u_\epsilon}|^r)+\frac{|\nabla\log{u_\epsilon}-\nabla\log{v_\epsilon}|^r}{2^{r-1}-1}u_\epsilon^r\right)\,{\rm d}x\\
& \geq \displaystyle\frac{1}{2^{r-1}-1}\int_{\Omega^+}|\nabla\log{v_\epsilon}-\nabla\log{u_\epsilon}|^r(v_\epsilon^r+u_\epsilon^r)\,{\rm d}x\\
& = \displaystyle\frac{1}{2^{r-1}-1}\int_{\Omega^+}|u_\epsilon\nabla v_\epsilon-v_\epsilon\nabla u_\epsilon|^r(\frac{1}{u_\epsilon^r}+\frac{1}{v_\epsilon^r})\,{\rm d}x.
\end{aligned}
\end{equation}
If $1<r<2$ then using the following inequality (with some suitable $C(r)>0$)
\begin{equation}
\nonumber
|w_2|^r\geq |w_1|^r+r|w_1|^{r-2}w_1(w_2-w_1)+C(r)\frac{|w_2-w_1|^2}{(|w_1|+|w_2|)^{2-r}},
\end{equation}
for all points $w_1$ and $w_2\in\R^{\rm N}$, the last term of the above inequality dealing with $r\geq 2$ is replaced by the following term
\begin{equation}
\nonumber
\begin{aligned}
& \displaystyle C(r)\int_{\Omega^+}\frac{|\nabla\log{v_\epsilon}-\nabla\log{u_\epsilon}|^2}{(|\nabla\log{v_\epsilon}|+|\nabla\log{u_\epsilon}|)^{2-r}}(v_\epsilon^r+u_\epsilon^r)\,{\rm d}x\\
& =\displaystyle C(r)\int_{\Omega^+}(\frac{1}{u_\epsilon^r}+\frac{1}{v_\epsilon^r})\frac{|u_\epsilon\nabla v_\epsilon-v_\epsilon\nabla u_\epsilon|^2}{(u_\epsilon|\nabla v_\epsilon|+v_\epsilon|\nabla u_\epsilon|)^{2-r}}\,{\rm d}x.
\end{aligned}
\end{equation}
In the right hand side, we get:
\begin{equation}
\nonumber
\begin{aligned}
& \int_{\Omega^+}\left(\frac{1}{u^\delta}+g(x,u)\right)\phi\,{\rm d}x+\int_{\Omega^+}\left(\frac{1}{v^\delta}+g(x,v)\right)\psi\,{\rm d}x\leq\\
& \int_{\Omega^+}\left[\frac{g(x,u)}{u^{r-1}}\left(\frac{u}{u_\epsilon}\right)^{r-1}-\frac{g(x,v)}{v^{r-1}}\left(\frac{v}{v_\epsilon}\right)^{r-1}\right](u_\epsilon^r-v_\epsilon^r)\,{\rm d}x.
\end{aligned}
\end{equation}
 Then, since $\frac{u}{u_\epsilon}\to 1$, $\frac{v}{v_\epsilon}\to 1$ as $\epsilon\to 0^+$ a.e. in $\Omega$, we get from \eqref{sublinearity+} and the Lebesgue Theorem
\begin{equation}
\nonumber
\displaystyle\lim_{\epsilon\to 0^+}\int_{\Omega^+}(g(x,u)\phi+g(x,v)\psi)\,{\rm d}x\leq 0.
\end{equation}
By Fatou lemma and using the above estimates, we obtain that $|u\nabla v-v\nabla u|=0$ a.e. in $\Omega^+$ from which we get that on each connected component set ${\mathcal O}$ of $\Omega^+$, there exists $k\in\R$ such that $u=kv$ in ${\mathcal O}$. From
\begin{equation}
\nonumber
\begin{aligned}
& \int_{{\mathcal O}}|\nabla u|^r\,{\rm d}x=k^r\int_{{\mathcal O}}|\nabla v|^r\,{\rm d}x\leq \int_{{\mathcal O}}(k^{1-\delta}v^{1-\delta}+g(x,kv)kv)\,{\rm d}x,\\
& k^r\int_{{\mathcal O}}|\nabla v|^r\,{\rm d}x\geq k^r\int_{{\mathcal O}}(v^{1-\delta}+g(x,v)v)\,{\rm d}x
\geq\int_{{\mathcal O}}(k^rv^{1-\delta}+g(x,kv)kv)\,{\rm d}x
\end{aligned}
\end{equation}
we get $k\leq 1$ which implies that $u\leq v$ in $\Omega^+$ and from the definition of $\Omega^+$, $u\leq v$ in $\Omega$.
\end{proof}
We now prove Theorem~\ref{2bis}.
\begin{proof}
For $0<\alpha_f<\ell<\lambda_1(\Omega)$, $A>0$ large enough and $0<\eta<M$ let define:
\begin{eqnarray}
\quad \phi =\left\{\begin{array}{ll}
 V\quad&\mbox{if}\; \delta<1,\\
\phi_1\left(\ln(\frac{A}{\phi_1})\right)^{\frac{1}{p}}\quad&\mbox{if}\; \delta=1,\\
\phi_1^{\frac{p}{p-1+\delta}}\quad&\mbox{if}\;\delta>1,
\end{array}\right.\mbox{ and }\quad 
\displaystyle \underline{u}=\eta \phi,
\label{subsol}
\end{eqnarray}
with $V$  a positive solution of 
$$
\left\{\begin{array}{rll}
\displaystyle -\Delta_p u&\displaystyle=\ell u^{p-1}+\frac{1}{u^\delta}\;&\displaystyle\mbox{in}\;\Omega,\\
\displaystyle u&\displaystyle=0\; &\displaystyle\mbox{on }\;\partial\Omega.
\end{array}
\right.
$$
The existence of $V$ follows from similar minimization and cut-off arguments given in {\sc Giacomoni-Schindler-Tak\'a\v{c} \cite[proof of Lemma 3.3, p.~126]{GiScTa}}. Note that from $\ell<\lambda_1(\Omega)$, the associated energy functional is coercive and weakly lower semicontinuous in $W^{1,p}_0(\Omega)$ and from Lemma~A.2 in \cite{GiScTa} G\^ateaux-differentiable in $W^{1,p}_0(\Omega)$. From Lemma~A.6 and Theorem~B.1 in \cite{GiScTa} and since $\delta<1$, $V\in C^{1,\alpha}(\overline{\Omega})$ for some $0<\alpha<1$.
\begin{eqnarray}
\label{supsol}
\displaystyle \overline{u}=\left\{\begin{array}{lc}
MV\quad{if}\;\delta<1\\
M\phi_1(\ln(\frac{A}{\phi_1})^{\frac{1}{p}})\quad{if}\;\delta=1\\
M(\phi_1+\phi) \quad{if}\;\delta>1.
\end{array}\right.
\end{eqnarray}
We verify that $\underline{u}\in \mathcal{C}$, $\overline{u}\in \mathcal{C}$. Let $L>0$ such that $-L\leq f(x,s)\leq \ell s^{p-1}+L$. We verify  that for $M>0$ large enough for $\eta>0$ small enough we have
$$
-\Delta_p \underline{u}-\frac{1}{\underline{u}^\delta}\leq -L \quad\mbox{in}\;\Omega,\quad \underline{u}=0\quad \mbox{ on }\partial\Omega,
$$
and 
$$
-\Delta_p\overline{u}-\frac{1}{\overline{u}^\delta}\geq \ell \overline{u}^{p-1}+L\quad\mbox{in}\;\Omega,\quad \overline{u}=0\quad \mbox{ on }\partial\Omega.
$$
\par We distinguish between the following two cases: the case where $\delta<1$ and the case where $\delta\geq 1$.
In the first case, the solution $u\in W^{1,p}_0(\Omega)$ to $({\rm Q})$ can be obtained as a global minimizer in $W^{1,p}_0(\Omega)$ of the functional $E$ defined below at $v\in W^{1,p}_0(\Omega)$:
$$
E(v)\eqdef\frac{1}{p}\int_{\Omega}|\nabla v|^p\,{\rm dx}-\int_{\Omega}G(x,v)\,{\rm dx}-\int_{\Omega}K(x,v)\,{\rm d}x,
$$
where for any $t\geq 0$
\begin{eqnarray*}
G(x,t)\eqdef\int_0^tg(x,s)\,{\rm d}s,\quad K(x,t)\eqdef\int_0^tk(x,s)\,{\rm d}s,
\end{eqnarray*}
and $g,k$ are the cut-off functions defined by
\begin{eqnarray*}
g(x,v(x))\eqdef\left\{\begin{array}{lc}
& v(x)^{-\delta}\;\mbox{if}\; v(x)\geq \underline{u}(x),\\
& \underline{u}(x)^{-\delta}\;\mbox{ otherwise},
\end{array}\right.
\end{eqnarray*}
and
\begin{eqnarray*}
k(x,v(x))\eqdef\left\{\begin{array}{ll}
\displaystyle  f(x,\underline{u}(x))\;&\displaystyle\mbox{if}\; v(x)\leq \underline{u}(x),\\
\displaystyle f(x,v(x))\;&\displaystyle\mbox{if}\; \underline{u}\leq v(x)\leq \overline{u}(x),\\
\displaystyle f(x,\overline{u}(x))\;&\displaystyle\mbox{if}\; v(x)\geq \overline{u}(x) .
\end{array}\right.
\end{eqnarray*}
Notice that the method of proof of Theorem \ref{1} when $\delta<1$ does not apply here because $\int_{\Omega}F(x,v) dx$ with $F(x,t)=\int_0^tf(x,s) ds$ is not convex in $v$. That is the reason why we introduce the above cut-off function. Since $f$ satisfies \eqref{sublineargrowth}, $E$ is coercive and  weakly lower semicontinuous in $W^{1,p}_0(\Omega)$. Using the compactness of any minimizing sequence $\{u_n\}$ in $L^p(\Omega)$ and the Lebesgue theorem, we can prove the existence of a global minimizer $u$ to $E$. From Lemma~A.2 in \cite{GiScTa}, we have that $E$ is G\^ateaux\--differentiable in $u$ and then $u$ satisfies:
$$
\left\{\begin{array}{rll}
\displaystyle -\Delta_p u-g(x,u)&\displaystyle=k(x,u) \;&\displaystyle\mbox{in }\,\Omega,\\
\displaystyle u&\displaystyle=0\; &\displaystyle\mbox{on }\;\partial\Omega.
\end{array}\right.
$$
Thus, from the weak comparison principle, we first get that $\underline{u}\leq u$ and then that $g(x,u)=u^{-\delta}$. Finally, still from the weak comparison principle we also obtain $u\leq \overline{u}$ from which we get $k(x,u)=f(x,u)$ and $u\in \mathcal{C}$.\par
Now, we deal with the second case. We use the following iterative scheme
\begin{equation}
\nonumber
\begin{array}{rll}
\displaystyle -\Delta_pu_n-\frac{1}{u_n^\delta}+Ku_n&\displaystyle=f(x,u_{n-1})+Ku_{n-1}\;&\displaystyle\mbox{in}\;\Omega,\\
\displaystyle u_n&\displaystyle=0\; &\displaystyle\mbox{on }\;\partial\Omega,
\end{array}
\end{equation}
with $u_0\eqdef\underline{u}$ and $K>0$ large enough such that $t\to Kt +f(x,t)$ is non decreasing (thanks to the uniform local lipschitz property of $f$) in $[0,\|\overline{u}\|_{L^\infty(\Omega)}]$ for a.e. $x\in\Omega$. Note that the iterative scheme is well-defined and produces a sequence of element $u_n\in W^{1,p}_0(\Omega)\cap \mathcal{C}\cap C_0(\overline{\Omega})$. From the weak comparison principle, we have that $(u_n)_{n\in\N}$ is a monotone increasing sequence
such that $u_n\leq\overline{u}$. 
Then $u_n\uparrow u$ in $C_0(\overline{\Omega})\cap \mathcal{C}$ and using the equation satisfied by $u_n$ we deduce that $(u_n)_{n\in\N}$ is a Cauchy sequence in $ W^{1,p}_0(\Omega)$ and then converges to $u $ in $W^{1,p}_0(\Omega)$. Thus, by passing to the limit in the equation satisfied by $u_n$ we obtain that $u$ is a solution to $({\rm Q})$. Finally, the uniqueness of $u$ follows from Theorem~\ref{WCP}.
\end{proof}
We now give the proof of Theorem~\ref{3bis}.
\begin{proof}
Let $T>0$, $N\in\N^*$ and $\Delta_t\eqdef \frac{T}{N}$. 
Following the main steps of the previous section, we are interested in constructing the sequence $(u^n)_{n\in\N}\subset L^\infty(\Omega)\cap W^{1,p}_0(\Omega)$, solutions to
\begin{equation}
\label{iterativeequation}
u^n-\Delta_t\left(\Delta_pu^n+\frac{1}{(u^n)^\delta}\right)=\Delta_t f(x,u^{n-1})+u^{n-1}\quad\mbox{in}\;\Omega.
\end{equation}
Applying Theorem~\ref{1} for each iteration $n$ and since $u_0\in {\mathcal C}\cap W^{1,p}_0(\Omega)$, we get the existence of $(u_n)_{n\in\N}\subset{\mathcal C}\cap W^{1,p}_0(\Omega)$. In fact, the previous inclusion is uniform in $\Delta_t$. Indeed, since $u_0\in {\mathcal C}$ we can choose $\underline{u}$ and $\overline{u}$ defined by \eqref{subsol} and \eqref{supsol} with $\eta>0$ and $M>0$ large enough so that $\underline{u}\leq u_0\leq \overline{u}$. Then with $f\geq -L$ the weak comparison principle guarantees $\underline{u}\leq u_n\leq \overline{u}$, with $\underline{u}$ and  $\overline{u}$ independent on $\Delta_t$. 
\par Next, let $u_{\Delta_t}$ and $ \tilde{u}_{\Delta_t}$ defined by \eqref{defUdiscret} and set $u_{\Delta_t}(t)=u_0$ if $t<0$. Then \eqref{distributions} is satisfied with $h_{\Delta_t}(t)\eqdef f(x,u_{\Delta_t}(t-\Delta_t))$ on $[0,T]$.
Notice that from $u_{\Delta_t}\in [\underline{u},\overline{u}]$ and $t\to f(x,t)$ continuous on  $[\underline{u},\overline{u}]$ it follows that $h_{\Delta_t}(t)$ is bounded in $L^\infty(Q_T)$ independently on $n$. Then by similar energy estimates as in the proof of Theorem~\ref{3} we get that
\begin{equation}
\label{estimateagain}
\begin{aligned}
& u_{\Delta_t},\; \tilde{u}_{\Delta_t}\in L^\infty(0,T;\,W^{1,p}_0(\Omega)\cap{\mathcal C}),\\
& \frac{\partial\tilde{u}_{\Delta_t}}{\partial t}\in L^2(Q_T),\\
& u_{\Delta_t},\; \tilde{u}_{\Delta_t}\in L^\infty(Q_T),\\
& \frac{1}{(u_{\Delta_t})^\delta}\in L^\infty(0,T;\,W^{-1,p'}(\Omega)),\\
& \|\tilde{u}_{\Delta_t}-u_{\Delta_t}\|_{L^2(\Omega)}\leq C(\Delta_t)^{\frac{1}{2}},
\end{aligned}
\end{equation}
uniformly on $\Delta_t$. Then, taking $\Delta_t\to 0$, it follows that there exists $u\in L^\infty(0,T;\, W^{1,p}_0(\Omega)$ such that $u\in L^\infty(Q_T)$ and, up to a subsequence, we have
\begin{equation}
\label{compactnessagain}
\begin{aligned}
& u_{\Delta_t},\, \tilde{u}_{\Delta_t}\wstarconverge u\quad\mbox{in}\; L^\infty(0,T;\,W^{1,p}_0(\Omega))\;\mbox{ and in}\; L^\infty(Q_T),\\
& \frac{\partial \tilde{u}_{\Delta_t}}{\partial t}\rightharpoonup\frac{\partial u}{\partial t}\quad\mbox{in}\; L^2(Q_T).
\end{aligned}
\end{equation}
Using similar arguments as in the proof of Theorem~\ref{3}, we get for $1<q<+\infty$,
\begin{equation}
\label{compactnessfinally}
u_{\Delta_t}, \tilde{u}_{\Delta_t}\to u\;\;\mbox{in}\; L^\infty(0,T;\,L^q(\Omega))\;\mbox{ and }\; u\in C([0,T],L^q(\Omega)).
\end{equation}
Moreover, if $K>0$ denotes the Lipschitz constant of $f$ on $[\underline{u},\overline{u}]$ we have
$$
\|f(x,u_{\Delta_t}(t-\Delta_t))-f(x,u(t))\|_{L^2(\Omega)}\leq K\|u_{\Delta_t}(t-\Delta_t)-u(t)\|_{L^2(\Omega)},
$$
and from \eqref{compactnessfinally} we deduce that $h_{\Delta_t}=f(x,u_{\Delta_t}(\cdot-\Delta_t))\to f(x,u)$ in $L^\infty(0,T;\, L^2(\Omega))$. Then by following the steps at the end of the proof of Theorem~\ref{3} we obtain that $u$ is a weak  solution to $({\rm P}_t)$ in ${\bf V}(Q_T)$. 
\par Next, let us prove that such a solution is unique. For that, let $v$ be a weak solution to  $({\rm P}_t)$ in ${\bf V}(Q_T)$. Since $f(x,\cdot)$ is locally lipschitz uniformly in $\Omega$, it follows that
\begin{eqnarray*}
\frac{1}{2}\|u-v\|_{L^2(\Omega)}^2&-&\int_0^T<\Delta_p u-\Delta_p v,u-v>{\rm d}t\\
-\int_0^T\int_{\Omega}\left(\frac{1}{u^\delta}-\frac{1}{v^\delta}\right)(u-v){\rm d}x{\rm d}t&=&\int_0^T\int_{\Omega}(f(x,u)-f(x,v)){\rm d}x{\rm d}t\\
&\leq&  C\int_0^T\int_{\Omega}|u-v|^2{\rm d}x{\rm d}t
\end{eqnarray*}
which implies together with the Gronwall Lemma and \eqref{monotonicity2}, that $u\equiv v$. Finally, as in the proof of Theorem~\ref{3}, we can prove that $u\in C([0,T]; W^{1,p}_0(\Omega))$ and that $u$ satisfies \eqref{secondenergyeqbis}.
\end{proof}
We now give the proof of Proposition~\ref{regsobolevbis}.
\begin{proof}
(i) is the consequence of \eqref{estimation} together with the fact that $f$ is locally lipschitz and the Gronwall Lemma.

Regarding assertion (ii), we follow the proof of Proposition \ref{regsobolev}:
Assume without loss of generality that $t>s$. Then, 
\begin{equation}
\nonumber
\|u(t)-u(s)\|_{L^\infty(\Omega)}\leq \|u_0-u(t-s)\|_{L^\infty(\Omega)}+\int_0^s\|f(x,u(\tau))-f(x,u(\tau+t-s))\|_{L^\infty(\Omega)}{\rm d}\tau.
\end{equation}
From assertion (i) and the fact that $f$ is Lipschitz on $[\underline{u},\overline{u}]$, it follows that
\begin{equation}
\nonumber
\begin{aligned}
 \|u(t)-u(s)\|_{L^\infty(\Omega)}&\leq \|u_0-u(t-s)\|_{L^\infty(\Omega)}+\omega\int_0^se^{\omega \tau}{\rm d}\tau \|u_0-u(t-s)\|_{L^\infty(\Omega)}\\
& \leq e^{\omega s}\|u_0-u(t-s)\|_{L^\infty(\Omega)}.
\end{aligned}
\end{equation}
Now, we estimate the term $\|u_0-u(t-s)\|_{L^\infty(\Omega)}$ in the following way:
\begin{equation}
\nonumber
\begin{aligned}
\|u_0-u(t-s)\|_{L^\infty(\Omega)}&\leq\int_0^{t-s}\|A u_0-f(x,u(\tau))\|_{L^\infty(\Omega)}{\rm d}\tau\\
&\leq (t-s)\|A u_0-f(x,u_0)\|_{L^\infty(\Omega)}+
\omega\int_0^{t-s}\|u_0-u(\tau)\|_{L^\infty(\Omega)}
{\rm d}\tau.
\end{aligned}
\end{equation}
From Gronwall lemma, we deduce that
\begin{equation}
\nonumber
\|u_0-u(t-s)\|_{L^\infty(\Omega)}\leq(t-s)e^{\omega(t-s)}\|A u_0-f(x,u_0)\|_{L^\infty(\Omega)}
\end{equation}
Gathering the above estimates, we get
\begin{equation}
\nonumber
\|u(t)-u(s)\|_{L^\infty(\Omega)}\leq(t-s)e^{\omega t}\|A u_0-f(x,u_0)\|_{L^\infty(\Omega)}.
\end{equation}
Then, the rest of the proof follows with the same arguments as in the proof of Proposition \ref{regsobolev}.
\end{proof}
To end this section, we prove Theorem~\ref{3ter}.
\begin{proof}
Let $\underline{u}$, $\overline{u}\in{\mathcal C}\cap W^{1,p}_0(\Omega)\cap C(\overline{\Omega})$ be the subsolution and supersolution to
\begin{equation}
\left\{\begin{array}{rll}
\displaystyle -\Delta_p u-\frac{1}{u^\delta}&\displaystyle=f(x,u)\;&\displaystyle\mbox{in}\;\Omega,\\
\displaystyle u&\displaystyle=0\;&\displaystyle\mbox{on } \partial\Omega.
\end{array}\right.\label{eqStat}
\end{equation}
which are defined by \eqref{subsol} and \eqref{supsol} where $\eta>0$ is small enough and $M>0$ is large enough so that $\underline{u}\leq u_0\le\overline{u}$. Note that it is possible since $u_0\in {\mathcal C}\cap W^{1,p}_0(\Omega)$.
Thus, let $u$ the solutions to $({\rm P}_t)$  and $u_1$, $u_2$ the solutions to $({\rm P}_t)$ with initial data $u_0=\underline{u}$ and $u_0=\overline{u}$ respectively, see Theorem~\ref{3bis}. From \eqref{subsol} and \eqref{supsol}, we have that 
\begin{equation}
\label{DA}
\underline{u},\;\overline{u}\in\overline{{\mathcal D}(A)}^{L^\infty(\Omega)}.
\end{equation}
 Indeed, let $f,g\in W^{-1,\frac{p}{p-1}}(\Omega)$ defined by $f\eqdef A\underline{u}\leq 0$, $g\eqdef A\overline{u}\geq 0$ and $(u_n)_{n\in \N},\,(v_n)_{n\in \N}$ two sequences of ${\mathcal D}(A)$ defined by
\begin{equation}
\nonumber
Au_n=f_n\eqdef\max\{f,-n\},\quad Av_n=g_n\eqdef\min\{g,n\}.
\end{equation}
From the weak comparison principle, we have that $(u_n)_{n\in \N}$ is nonincreasing and $(v_n)_{n\in \N}$ is nondecreasing. Moreover, since $f_n\to f$ and $g_n\to g$ in $W^{-1,\frac{p}{p-1}}(\Omega)$ as $n\to +\infty$, $u_n\to \underline{u}$ and $v_n\to\overline{u}$ in $W^{1,p}_0(\Omega)$ as $n\to +\infty$. Therefore, $u_n\to \underline{u}$ and $v_n\to \overline{u}$ a.e. in $\Omega$ as $n\to\infty$. Consequently, using Dini's Theorem, we get that $u_n\to\underline{u}$ and $v_n\to\overline{u}$ in $L^\infty(\Omega)$ as $n\to +\infty$.
\noindent From \eqref{DA} and Theorem~\ref{3bis}, we obtain that $u_1(t)$ and $u_2(t)\in C([0,T];\,C_0(\overline{\Omega}))$. Furthermore, since $\underline{u}$, $\overline{u}\in{\mathcal C}$ are subsolution and supersolution respectively to \eqref{eqStat}, we have that the sequence $(\underline{u}^n)_{n\in \N}$ (resp. $(\overline{u}^n)_{n\in \N}$) defined in \eqref{iterativeequation} with $u_0=\underline{u}$ ( $u_0=\overline{u}$ resp.,) is nondecreasing (nonincreasing resp.,) for any $0<\Delta_t<1/K$ where $K>0$ is the Lipschitz constant of $f$ on $[\underline{u},\overline{u}]$. Moreover, the sequence $(u^n)_{n\in \N}$ defined by \eqref{iterativeequation} satisfies $\underline{u}^n\leq u^n\leq \overline{u}^n$ and it follows that $u_1(t)\leq u(t)\leq u_2(t)$ and that $t\to u_1(t)$ ($t\to u_2(t)$ resp.,) is nondecreasing (nonincreasing resp.,) and converges a.e. in $\Omega$
  to $u_1^\infty$ ($u_2^\infty$ resp.), as $t\to \infty$. 
From the semigroup theory we have $u_1^\infty=\lim_{t'\to +\infty} S(t'+t)(\underline{u})=S(t)(\displaystyle\lim_{t'\to +\infty}S(t')\underline{u})=S(t)u_1^\infty$ and analogously $u_2^\infty=S(t)u_2^\infty$, where $S(t)$ is the semigroup on $L^\infty(\Omega)$ generated by the evolution equation, and then $u_1^\infty$ and $u_2^\infty$ are stationary solutions to $({\rm P}_t)$. From Theorem~\ref{2bis}, we get that $u_1^\infty=u_2^\infty=u_\infty\in C(\overline{\Omega})$. Therefore, from Dini's Theorem we get that
\begin{equation}
\label{monotonecompactness}
u_1(t)\to u_\infty,\;\; u_2(t)\to u_\infty\;\;\mbox{ in $L^\infty(\Omega)$ as}\; t\to\infty,
\end{equation}
and then \eqref{convasympt} follows since $u_1(t)\leq u(t)\leq u_2(t)$.
\end{proof}
\section{The non degenerate case: $p=2$}\label{pegal2}
We start by proving the first part, as well as points (iv) and (v), of Theorem \ref{4}, namely:
\begin{theorem}
\label{distributionsol}
Let $0<\delta$, $T>0$, $u_0\in{\mathcal C}$ and let $f$ satisfy assumptions in Theorem~\ref{3bis}. Then there exists a unique solution $u\in C([0,T];L^2( \Omega))\cap L^\infty(Q_T)$ such that $u(t)\in \mathcal{C}$ a.e. $t\in [0,T]$, in the sense of distributions to
\begin{eqnarray}
\label{distri-equation-fu}
\left\{\begin{array}{lc}
&\displaystyle u_t-\Delta u-\frac{1}{u^\delta}=f(x,u)\quad\mbox{in}\;Q_T,\\
&\displaystyle u=0\;\mbox{ on } \Sigma_T,\quad u(0)=u_0\;\mbox{in}\;\Omega.
\end{array}\right.
\end{eqnarray}
Moreover, points  (iv) and (v) of Theorem \ref{4} are satisfied.
\end{theorem}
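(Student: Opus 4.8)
The plan is to exploit the non-degeneracy $p=2$ to replace the quasilinear machinery of the previous sections by semigroup theory for the heat operator $\Delta$ perturbed by the now only mildly singular zero order term, while still confining the solution to the conical shell $\mathcal C$ in order to control $u^{-\delta}$. First, for each $\eps>0$ I would solve the regularized problem
\[
\partial_t u_\eps-\Delta u_\eps=\frac{1}{(u_\eps+\eps)^\delta}+f(x,u_\eps)\ \text{ in }Q_T,\qquad u_\eps=0\ \text{ on }\Sigma_T,\qquad u_\eps(0)=u_0 ;
\]
after truncating $f$ outside a fixed order interval, the reaction term is globally Lipschitz and bounded on $L^2(\Omega)$, so the variation-of-constants formula for $e^{t\Delta}$ produces a unique global mild solution $u_\eps\in C([0,T];L^2(\Omega))$. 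Using the time-independent supersolution $\overline u$ of \eqref{supsol} (which is a supersolution of the $\eps$-problem a fortiori, since $(\overline u+\eps)^{-\delta}\le\overline u^{-\delta}$) and the $\eps$-dependent subsolutions $\underline u_\eps$ of the kind introduced in \eqref{epsisub}, \eqref{epsisub2} (adapted to $p=2$ and to all $\delta>0$, and satisfying $\underline u_\eps\le u_0$ because $u_0\in\mathcal C$), the classical non-singular parabolic comparison principle gives $\underline u_\eps\le u_\eps(t)\le\overline u$ for every $t$, so the truncation of $f$ is inactive and $u_\eps$ solves the genuine $\eps$-problem; a further comparison shows $\eps\mapsto u_\eps$ is non-increasing, and since $\underline u_\eps\to\underline u>0$ uniformly on compact subsets of $\Omega$ one gets $(u_\eps+\eps)^{-\delta}\le\underline u_\eps^{-\delta}\le C_K$ uniformly on each compact $K\subset\Omega$ for $\eps$ small.

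I would then let $\eps\to0^+$. By monotonicity $u_\eps\downarrow u$ pointwise a.e., with $\underline u\le u\le\overline u$; dominated convergence (majorant $\overline u\in L^\infty$) gives $u_\eps\to u$ in $L^q(Q_T)$ for all $q<\infty$ and $u_\eps(t)\to u(t)$ in $L^2(\Omega)$ for each $t$, while the interior bound gives $(u_\eps+\eps)^{-\delta}\to u^{-\delta}$ in $L^1(\mathrm{supp}\,\phi)$ for any $\phi\in\mathcal D(Q_T)$. Together with $f(x,u_\eps)\to f(x,u)$ this lets me pass to the limit in the distributional identity for $u_\eps$ and obtain \eqref{distri-equation-fu} (equivalently \eqref{solutionp=2}), with $u(t)\in\mathcal C$ uniformly. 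For the continuity $u\in C([0,T];L^2(\Omega))$ the key remark is that when $\delta<2+\tfrac1{p-1}=3$, Hardy's inequality bounds $(u_\eps+\eps)^{-\delta}$ in $L^\infty(0,T;H^{-1}(\Omega))$ uniformly in $\eps$, so the smoothing of $e^{t\Delta}$ yields uniform time-equicontinuity of $\{u_\eps\}$ and hence $u_\eps\to u$ in $C([0,T];L^2(\Omega))$; when $\delta\ge3$ this $H^{-1}$ control is lost and the continuity has to be obtained within the $L^2$-accretive framework of the next paragraph.

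For uniqueness and for points (iv)--(v) I would use the $L^2$-accretivity of $v\mapsto-\Delta v-v^{-\delta}-f(x,v)$ on the order interval $[\underline u,\overline u]$. If $u_1,u_2$ are two distributional solutions in $\mathcal C$, then $w=u_1-u_2$ satisfies, formally,
\[
\tfrac12\tfrac{d}{dt}\|w\|_{L^2}^2+\|\nabla w\|_{L^2}^2=\int_\Omega\big(u_1^{-\delta}-u_2^{-\delta}\big)w\,dx+\int_\Omega\big(f(x,u_1)-f(x,u_2)\big)w\,dx\le\omega\|w\|_{L^2}^2,
\]
because $s\mapsto-s^{-\delta}$ is increasing (first integral $\le0$) and $f$ is Lipschitz with constant $\omega$ on $[\underline u,\overline u]$; Poincaré's inequality $\|\nabla w\|_{L^2}^2\ge\lambda_1\|w\|_{L^2}^2$ and Gronwall's lemma then yield \eqref{eqtomega}, hence uniqueness (equal data force $w\equiv0$) and assertion (iv). When $f(x,\cdot)$ is non-increasing the second integral is $\le0$, so \eqref{eqtomega} holds with $\omega=0$; the stabilization $u(t)\to u_\infty$ then follows exactly as in the proof of Theorem~\ref{3ter}: the trajectories issued from $\underline u$ and from $\overline u$ (which lie in $\overline{\mathcal D(A)}^{L^\infty(\Omega)}$ and are sub/supersolution of the stationary problem) are monotone in $t$, squeeze $u(t)$ because $\underline u\le u_0\le\overline u$, and converge --- by Theorem~\ref{2bis} and Dini's theorem --- to $u_\infty$ in $L^\infty(\Omega)$, whence $u(t)\to u_\infty$ in $L^2(\Omega)$.

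The main obstacle is the range $\delta\ge3$: there the solution is only a very weak (distributional) one --- $u\notin L^2(0,T;H^1_0(\Omega))$, $u^{-\delta}\notin L^2(0,T;H^{-1}(\Omega))$, and even $u^{1-\delta}$ need not be integrable --- so neither the energy identity for $w=u_1-u_2$, nor the contraction \eqref{eqtomega}, nor the $L^2$-time-continuity can be read off directly. I would make them rigorous by carrying out every such computation on the regularized pairs $u_{1,\eps},u_{2,\eps}$ (which have full $H^1_0(\Omega)$ regularity), keeping the favorable sign of the singular-term contributions, and passing to the limit with the monotone convergence $u_{i,\eps}\downarrow u_i$ and lower semicontinuity of the $L^2$-norm; the fact that any distributional solution in $\mathcal C$ is $\ge\underline u_\eps$, hence a supersolution of every $\eps$-problem and therefore dominates $u=\lim_\eps u_\eps$, combined with \eqref{eqtomega} for equal initial data, then pins the solution down uniquely.
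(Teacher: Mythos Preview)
Your overall strategy---$\eps$-regularization, comparison with barriers in $\mathcal C$, passage to the limit, and an $L^2$-contraction for uniqueness and (iv)--(v)---matches the paper's. Two technical choices differ, and both of the paper's choices make the argument uniform in $\delta>0$, which is exactly where you have to split into cases.

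First, for the convergence $u_\eps\to u$ and the continuity $u\in C([0,T];L^2(\Omega))$: you use only one-sided monotonicity of $\eps\mapsto u_\eps$, which gives pointwise and $L^q(Q_T)$ convergence but not uniform-in-$t$ convergence in $L^2(\Omega)$; you then recover equicontinuity from an $H^{-1}$ bound on $(u_\eps+\eps)^{-\delta}$ when $\delta<3$, and leave $\delta\ge3$ to a separate accretive argument. The paper instead exploits a \emph{two-sided} monotonicity: not only $u_{\tilde\eps}\ge u_\eps$ for $\tilde\eps\le\eps$, but also $u_{\tilde\eps}+\tilde\eps\le u_\eps+\eps$ (seen by writing the equation for $v_\eps=u_\eps+\eps$). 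Together these give $\|u_\eps-u_{\tilde\eps}\|_{L^\infty(Q_T)}\le|\eps-\tilde\eps|$, so $(u_\eps)$ is Cauchy in $L^\infty(Q_T)$ and $u\in C([0,T];L^2(\Omega))$ follows immediately for every $\delta>0$, with no case distinction.

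Second, for uniqueness: you test with $w=u_1-u_2$, which requires $w\in L^2(0,T;H^1_0(\Omega))$ and forces the $\eps$-detour when $\delta\ge3$. The paper tests instead with $(-\Delta)^{-1}(u_1-u_2)\in H^2(\Omega)\cap H^1_0(\Omega)$: the pairing with $u_1^{-\delta}-u_2^{-\delta}$ is well defined (Hardy places the latter in $H^{-3/2+\eta}(\Omega)$ for small $\eta$) and has the right sign because $(-\Delta)^{-1}$ preserves order, yielding $\tfrac{d}{dt}\|u_1-u_2\|_{H^{-1}}^2+2\|u_1-u_2\|_{L^2}^2\le0$ directly. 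This works for all $\delta>0$ without approximation.

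A structural difference worth noting: the paper first treats the problem with a fixed right-hand side $g\in L^\infty(Q_T)$ (where the two-sided monotonicity is clean) and then obtains the semilinear solution by a contraction fixed point in $\{\underline u\le\cdot\le\overline u\}\cap C([0,T];L^2(\Omega))$, deriving \eqref{eqtomega} at the $\eps$-level with $g=f(x,u_i)$ frozen. Your direct attack on the fully nonlinear $\eps$-problem is also legitimate (the comparison still holds since the nonlinearity is Lipschitz on the order interval), but the paper's decoupling is what makes the $L^\infty$-Cauchy argument transparent. Finally, for (v) the paper simply reads off exponential decay in $L^2$ from \eqref{eqtomega} with $\omega=0$ (taking $u_2=u_\infty$), which is quicker than the sandwich-by-monotone-trajectories argument you import from Theorem~\ref{3ter}.
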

\begin{proof}
First, for $g\in L^\infty(Q_T)$ let us prove the existence of a weak solution $u\in C([0,T];L^2( \Omega))\cap L^\infty(Q_T)$ in the sense of distributions to
\begin{eqnarray}
\label{distri-equation}
\left\{\begin{array}{lc}
&\displaystyle  u_t-\Delta u-\frac{1}{u^\delta}=g\quad\mbox{in}\;Q_T,\\
&\displaystyle u=0\;\mbox{ on } \Sigma_T,\quad u(0)=u_0\;\mbox{in}\;\Omega.
\end{array}\right.
\end{eqnarray}
Since $u_0\in {\mathcal C}$, for any $\epsilon >0$ there exists $\underline{u}_\epsilon\in C_0(\overline{\Omega})\cap H^1_0(\Omega)$ in the form \eqref{epsisub} and \eqref{epsisub2} with $p=2$ if $\delta\geq 1$ and $\underline{u}_\epsilon=\eta\phi_1$ if $\delta<1$ (with $\eta>0$ small enough) such that $\underline{u}_\epsilon\leq u_0$ and
\begin{equation}
\nonumber
-\Delta\underline{u}_\epsilon-\frac{1}{(\underline{u}_\epsilon+\epsilon)^\delta}\leq -\|g\|_{L^\infty(Q_T)}\quad\mbox{in }\;\Omega.
\end{equation}
In addition, there exists $\overline{u}$ in the form given by \eqref{supsol} such that $u_0\leq \overline{u}$ and verifying 
\begin{eqnarray*}
-\Delta \bar{u}-\frac{1}{\bar{u}^\delta}\geq \|g\|_{L^\infty(Q_T)}\quad\mbox{in}\;\Omega.
\end{eqnarray*}
Then following the method and using estimates from Section~\ref{linearcase} (in particular \eqref{esti2} with $p=2$), we can prove the existence and the uniqueness of a positive solution $u_\epsilon\in L^\infty(Q_T)\cap L^2(0,T;\, H^1_0(\Omega))$, satisfying $\underline{u}_\epsilon\leq u_\epsilon\leq\overline{u}$, to
\begin{eqnarray}
\label{epsilon-approche}
({\rm P}_{\epsilon,t})\left\{\begin{array}{lc}
&\displaystyle u_t-\Delta u-\frac{1}{(u+\epsilon)^\delta}=g\quad\mbox{in}\;Q_T,\\
& \displaystyle u=0\;\mbox{ on } \Sigma_T,\;\;u(0)=u_0\;\mbox{in}\;\Omega.
\end{array}\right.
\end{eqnarray}
From the weak comparison principle, we get that  that if $0\leq\tilde{\epsilon}\leq\epsilon$ then $u_{\epsilon}\leq u_{\tilde{\epsilon}}$ and $u_{\tilde{\epsilon}}+\tilde{\epsilon}\leq u_{\epsilon}+\epsilon$. This last inequality is obtained by remarquing that $v_\epsilon\eqdef u_\epsilon+\epsilon$ and $v_{\tilde{\epsilon}}\eqdef u_{\tilde{\epsilon}}+\tilde{\epsilon}$ obeys
\begin{eqnarray*}
\left\{\begin{array}{lc}
&\displaystyle(v_{\tilde{\epsilon}}-v_{\epsilon})_t-\Delta (v_{\tilde{\epsilon}}-v_{\epsilon})-\frac{1}{(v_{\tilde{\epsilon}})^\delta}+\frac{1}{(v_{\epsilon})^\delta}=0\quad\mbox{in}\;Q_T,\\
&\displaystyle v_{\tilde{\epsilon}}-v_\epsilon\leq 0\;\mbox{ on } \Sigma_T,\quad\mbox{and}\;\; v_{\tilde{\epsilon}}(0)-v_\epsilon(0)\leq 0\;\mbox{in}\;\Omega.
\end{array}\right.
\end{eqnarray*}
It follows that $(u_\epsilon)_{\epsilon>0}$ is a Cauchy sequence in $L^\infty(Q_T)$ and there exists $u\in L^\infty(Q_T)$ satisfying $\underline{u}\eqdef\displaystyle\lim_{\epsilon\to 0^+}\underline{u}_\epsilon\leq u\leq \overline{u}$ and such that $u_\epsilon\to u$ in $L^\infty(Q_T)$ as $\epsilon\to 0^+$. Therefore, $u$ is uniformly in ${\mathcal C}$. Then, passing to the limit as $\epsilon\to 0^+$, it is easy to get from \eqref{epsilon-approche} that $u$ is a solution in the sense of distributions to \eqref{distri-equation}. Finally, since $(u_\epsilon)_{\epsilon>0}$ is also a Cauchy sequence in $L^\infty(0,T;\,L^2(\Omega))$, and since one has $u_\epsilon\in C([0,T];\,L^2(\Omega))$ (by regularity results for the heat equation) then $u\in C([0,T];\,L^2(\Omega))$. To prove the uniqueness of $u$, let us suppose that $v\in L^\infty(Q_T)\cap C([0,T],L^2(\Omega))$, $v(t)\in {\mathcal C}$ a.e., is another solution (in the sense of distributions) to \eqref{distri-equation}. Then su
 bstracting the equation satisfied by $v$ to
   \eqref{distri-equation} and doing the dual product in $H^2(\Omega)\cap H^1_0(\Omega)$ with $(-\Delta)^{-1}(u-v)$ yield
\begin{eqnarray*}
\frac{d}{dt}\|u(t)-v(t)\|_{H^{-1}(\Omega)}^2&+&2\|u(t)-v(t)\|_{L^2(\Omega)}^2\\
&+&2\int_\Omega(\frac{1}{v(t)^\delta}-\frac{1}{u(t)^\delta})(-\Delta)^{-1}(u(t)-v(t)){\rm d}x=0,
\end{eqnarray*}
where $\|\cdot\|_{H^{-1}(\Omega)}\eqdef\sqrt{((-\Delta)^{-1}\cdot |\cdot)_{H^1_0(\Omega), H^{-1}(\Omega)}}$ is a norm on $H^{-1}(\Omega)$. Note that from Hardy's inequality the last above term is well-defined since for a.e. $t\in [0,T]$ we have $(-\Delta)^{-1}(u(t)-v(t))\in H^2(\Omega)\cap H_0^1(\Omega)$ ($u(t)$, $v(t)\in \mathcal{C}$ implies $\frac{1}{v(t)^\delta}-\frac{1}{u(t)^\delta}\in H^{-\frac{3}{2}+\eta}(\Omega)\subset (H^2(\Omega)\cap H_0^1(\Omega))'$ for $0<\eta<\frac{2}{1+\delta}$ because $H_0^{\frac{3}{2}-\eta}(\Omega)$ is a subset of $H^2(\Omega)\cap H_0^1(\Omega)$ (see {\sc Grisvard} \cite{Gr} and Lemma \eqref{Hardy} below), and it is positive since $(-\Delta)^{-1}$ is monotone. Then integrating over $(0,T)$ and taking into account $u(0)=v(0)=u_0\in H^{-1}(\Omega)$ yields $u\equiv v$.
\par Let us now prove the existence and uniqueness of the solution to \eqref{distri-equation-fu}. For that we now consider $\underline{u}$, $\overline{u}$ defined by \eqref{subsol}, \eqref{supsol} (subsolution and supersolution to the stationary equation \eqref{distri-equation}, resp.) and obeying $\underline{u}\leq u_0\leq  \overline{u}$. For $z_1$, $z_2$ in $\{\underline{u}\leq u\leq \overline{u}\}\cap C([0,T];\,L^2(\Omega))$ we consider $u_\epsilon^1$ (resp. $u_\epsilon^2$) the solution to $({\rm P}_{\epsilon,t})$ for $g=f(x,z_1)$ ($g=f(x,z_2)$, resp.). By multiplying by $u_\epsilon^1-u_\epsilon^2$ the equation satisfied by $u_\epsilon^1-u_\epsilon^2$, integrating over $(0,t)$, and taking into account that $f(x,\cdot)$ is lipschitz in $[0,\|\overline{u}\|_{\infty}]$, we obtain
$$
\int_\Omega|u_\epsilon^1(t)-u_\epsilon^2(t)|^2{\rm d}x+2\int_0^t\int_{\Omega}|\nabla(u_\epsilon^1-u_\epsilon^2)|^2{\rm d}x{\rm d}t\leq  2\omega\int_0^t\int_{\Omega}|z_1(t)-z_2(t)|^2{\rm d}x{\rm d}t,
$$
for a positive constant $\omega>0$ depending on $\|\overline{u}\|_{\infty}$ but not in $z_1$, $z_2$. From the weak comparison principle we deduce that $\underline{u}_\epsilon \leq u_\epsilon^i \leq  \overline{u}$, $i=1,2$, and
by passing to the limit $\epsilon\to 0^+$ we obtain that the solution to \eqref{distri-equation} for $g=f(x,z_1)$ (resp. $g=f(x,z_2)$) obeys:
\begin{equation}
\int_\Omega|u^1(t)-u^2(t)|^2{\rm d}x\leq 2\omega \int_0^t\int_{\Omega}|z_1(t)-z_2(t)|^2{\rm d}x{\rm d}t\quad \mbox{ and }\quad \underline{u}\leq u^i\leq  \overline{u}, \; i=1,2.\label{estunicite}
\end{equation}
Then by applying the fixed point Theorem in $\{\underline{u}\leq u\leq \overline{u}\}\cap C([0,T];\,L^2(\Omega))$ for $T=T(\|\overline{u}\|_\infty)>0$ small enough, we get the existence of a weak solution (in the sense of distributions), $u\in \{\underline{u}\leq u\leq \overline{u}\} \cap C([0,T];\,L^2(\Omega))$, to \eqref{solutionp=2}. Thus, since $f(x,\cdot)$ is lipschitz in $[0,\|\overline{u}\|_{\infty}]$, we can extend $u$ in $[0,+\infty)$ and $u\in C([0,+\infty);\,L^2(\Omega))$. Note that the uniqueness of the solution is an obvious consequence of \eqref{estunicite}. Finally, if $u_\epsilon^i$ is the solution to \eqref{epsilon-approche} with $u_0=u_{0,i}$ and $g=f(x,u_i)$, $i=1,2$, then by multiplying by $u_\epsilon^1-u_\epsilon^2$ the equation satisfied by $u_\epsilon^1-u_\epsilon^2$ and integrating over $(0,t)$ we obtain:
\begin{eqnarray*}
\int_\Omega|u_\epsilon^1(t)-u_\epsilon^2(t)|^2{\rm d}x+2\lambda_1 \int_0^t\int_\Omega |u_\epsilon^1-u_\epsilon^2|^2{\rm d}x{\rm d}t&\leq& \\
2\int_0^t\int_{\Omega}(f(x,u_1)-f(x,u_2))(u_\epsilon^1-u_\epsilon^2){\rm d}x{\rm d}t&+& \int_\Omega (u_{0,1}-u_{0,2})^2{\rm d}x,
\end{eqnarray*}
where $\lambda_1>0$ is the first eigenvalue of the Dirichlet Laplacian ($\int_\Omega |\nabla z|^2{\rm d}x\geq \lambda_1 \int_\Omega |z|^2{\rm d}x$ for all $z\in H_0^1(\Omega)$). Then \eqref{eqtomega} is obtained  by passing to the limit $\epsilon\to 0^+$ and by applying Gronwall Lemma. 
\end{proof}
\bigskip
\par
Next, we discuss the regularity of solutions to \eqref{distri-equation}. Precisely, we prove the following result:
\begin{theorem}
\label{regularitydistri-equation}
Under the assumptions of Theorem \ref{distributionsol}, $\forall\eta>0$ small enough, we have that any solution $u$ to \eqref{distri-equation} satisfies:
\begin{itemize}
\item[(i)] if $\delta<1$ and $u_0\in{\mathcal C}\cap H^{2-\eta}(\Omega)$, then $u\in C([0,T];\, H^{2-\eta}(\Omega))$;
\item[(ii)] if $\frac{1}{2}\leq \delta<1$ and $u_0\in{\mathcal C}\cap H^{\frac{5}{2}-\delta-\eta}(\Omega)$, then $u\in C([0,T];\, H^{\frac{5}{2}-\delta-\eta}(\Omega))$;
\item[(iii)] if $\delta\geq 1$ and $u_0\in {\mathcal C}\cap H^{\frac{1}{2}+\frac{2}{\delta+1}-\eta}(\Omega)$, then $u\in C([0,T];\, H^{\frac{1}{2}+\frac{2}{\delta+1}-\eta}(\Omega))$. 
\end{itemize}
\end{theorem}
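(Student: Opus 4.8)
The plan is to read \eqref{distri-equation} (and, for the last assertion, \eqref{distri-equation-fu}) as the linear Dirichlet heat equation $u_t-\Delta u=G(t)$ in $Q_T$, $u=0$ on $\Sigma_T$, $u(0)=u_0$, with source $G(t)\eqdef g(t)+u(t)^{-\delta}$ (respectively $G(t)\eqdef f(x,u(t))+u(t)^{-\delta}$), and then to exploit the smoothing of the heat semigroup once the fractional Sobolev regularity of $G$ and of $u_0$ has been identified. By Theorem~\ref{distributionsol} the solution stays uniformly in the cone $\mathcal C$: there are $\underline u,\overline u\in\mathcal C$ with $\underline u\le u(t,\cdot)\le\overline u$ a.e. for every $t$. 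Hence $0\le u(t,x)^{-\delta}\le C\,d(x)^{-\gamma}$ a.e., where, from the definition of $\mathcal C$ with $p=2$, $\gamma=\delta$ if $\delta<1$, $\gamma=\tfrac{2\delta}{\delta+1}$ if $\delta>1$, and $\gamma=1-\eta$ for any $\eta>0$ if $\delta=1$ (absorbing the logarithmic factor).

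The analytic input is a Hardy-type Sobolev embedding (the forthcoming lemma, see also {\sc Grisvard} \cite{Gr}): a measurable function majorised by $C\,d(x)^{-\gamma}$ belongs to $H^\sigma(\Omega)$ for every $\sigma<\tfrac12-\gamma$, a space of negative order once $\gamma\ge\tfrac12$. Since in addition $g\in L^\infty(\Omega)\subset L^2(\Omega)=H^0(\Omega)$, we get $G\in L^\infty\bigl(0,T;H^{\beta}(\Omega)\bigr)$ for every $\beta<\beta_\star\eqdef\min\{0,\ \tfrac12-\gamma\}$. Here $\beta_\star=0$ precisely when $\delta\le\tfrac12$ (the case in which $g$ is the least regular term, item (i)), $\beta_\star=\tfrac12-\delta$ when $\tfrac12<\delta<1$ (item (ii)), and $\beta_\star=\tfrac12-\tfrac{2\delta}{\delta+1}=-\tfrac32+\tfrac{2}{\delta+1}$ when $\delta\ge1$ (item (iii)); in each case $s_0\eqdef\beta_\star+2$ is exactly the target exponent $2$, $\tfrac52-\delta$, $\tfrac12+\tfrac{2}{\delta+1}$. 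Moreover $u_0\in\mathcal C\cap H^{s_0}(\Omega)$ by hypothesis and, being in $C_0(\overline\Omega)$, vanishes on $\partial\Omega$, so by the Grisvard description of the interpolation scale of the Dirichlet Laplacian $u_0$ lies, for $q$ large, in the space of admissible initial data for $L^q$-maximal regularity of $-\Delta$ on the scale $\{H^{\beta}(\Omega)\}$ (an $H^{s_0-\eta}$-type space with the Dirichlet condition, since $s_0\in(\tfrac12,2)$).

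It then suffices to invoke $L^q$-maximal regularity for the heat equation with $q$ large: from $G\in L^\infty(0,T;H^{\beta}(\Omega))\subset L^q(0,T;H^{\beta}(\Omega))$ and $u_0$ in the corresponding trace space one gets $u\in C\bigl([0,T];H^{\beta+2}(\Omega)\bigr)$; choosing $\beta=s_0-2-\eta$ (the strict inequality $\beta<\beta_\star$ providing the slack needed in the smoothing estimate, e.g. the convergence of $\int_0^t(t-s)^{-1+(\beta_\star-\beta)/2}\,{\rm d}s$ when one uses the Duhamel formula $u(t)=e^{t\Delta}u_0+\int_0^t e^{(t-s)\Delta}G(s)\,{\rm d}s$ together with $\|e^{\tau\Delta}\|_{\mathcal{L}(H^{\beta},H^{\beta+2\rho})}\le C\tau^{-\rho}$, $0\le\rho<1$) yields $u\in C([0,T];H^{s_0-\eta}(\Omega))$. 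As the mild solution so obtained belongs to $C([0,T];L^2(\Omega))\cap L^\infty(Q_T)$ and solves \eqref{distri-equation} in the sense of distributions, the uniqueness in Theorem~\ref{distributionsol} identifies it with $u$, and renaming $\eta$ gives (i)--(iii). For \eqref{distri-equation-fu} the only change is $G(t)=f(x,u(t))+u(t)^{-\delta}$, and since $f$ is a bounded Caratheodory function and $u\in L^\infty(Q_T)$, the term $f(x,u)$ belongs to $L^\infty(Q_T)\subset L^\infty(0,T;L^2(\Omega))$, hence is no less regular than $g$, and the argument is unchanged.

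The main obstacle is twofold. First, pinning down the sharp Sobolev exponent $\tfrac12-\gamma$ of $d(\cdot)^{-\gamma}$: it is precisely this number that yields the thresholds $2$, $\tfrac52-\delta$, $\tfrac12+\tfrac2{\delta+1}$, and establishing it requires a careful estimate of a fractional Gagliardo/Hardy seminorm of powers of the distance function near $\partial\Omega$ (this is the content of the deferred lemma). Second, one must match the parabolic trace space of the pair $\bigl(H^{\beta}(\Omega),\,H^{\beta+2}(\Omega)\cap D(\Delta)\bigr)$ with a genuine fractional Sobolev space carrying the correct boundary condition at order $s_0$ — the Dirichlet condition on $(\tfrac12,2)$, with the borderline value $s_0=\tfrac32$ to be handled separately — so that the hypothesis $u_0\in\mathcal C\cap H^{s_0}(\Omega)$ genuinely places $u_0$ in the admissible class of initial data; this relies on Grisvard-type characterisations of the domains of the fractional powers of $-\Delta$. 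Everything else — the uniform-in-time $L^2$/$L^\infty$ bounds, the $H^{\beta}$-measurability of $t\mapsto G(t)$, and the passage from a distributional to a mild solution — is routine.
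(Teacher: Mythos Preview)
Your approach is essentially the same as the paper's: both write the solution via the Duhamel formula $u(t)=e^{t\Delta}u_0+L_T(u^{-\delta}+g)(t)$, use the cone condition $u\in\mathcal C$ to bound $u^{-\delta}$ by $C\,d(x)^{-\gamma}$ (with $\gamma=\delta$ if $\delta<1$ and $\gamma=\tfrac{2\delta}{\delta+1}$ if $\delta\ge1$), invoke the Hardy inequality (Lemma~\ref{Hardy}) to place $u^{-\delta}+g$ in $(\mathcal D((-\Delta)^\theta))'$ with the sharp $\theta$, and then apply $L^q$-maximal regularity (Lemma~\ref{maximalregularity}) with $q$ large together with the Grisvard identification of the trace space (Proposition~\ref{fractionalpower}). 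The case splits and target exponents you obtain coincide exactly with those in the paper's proof; note only that the statement's ``$\delta<1$'' in (i) is effectively ``$\delta<\tfrac12$'' (as in Theorem~\ref{4} and the paper's own proof), which your computation of $\beta_\star$ already captures.
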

To prove Theorem~\ref{regularitydistri-equation}, we will use the interpolation theory in Sobolev spaces (see {\sc Grisvard} \cite{Gr}, {\sc Triebel} \cite{Tr}) and Hardy inequalities. In this regard, we adopt the following notation: let $X,Y$ be two banach spaces, by $X\subset Y$ we mean that $X$ is continuously imbedded in $Y$. Let us recall some definitions and properties of interpolation spaces theory (see {\sc Triebel} \cite{Tr} for more details):
\begin{definition}
Let $A_0$, $A_1$ be two Banach spaces. For $\theta\in (0,1)$, $1\leq q<\infty$, following the $K$\--method due to {\sc J.~Peetre}, we can define the following interpolation space:
\begin{equation}
\nonumber
\displaystyle (A_0,A_1)_{\theta,q}=\left\{a\in A_0+A_1\big{|}\,\|a\|_{(A_0,A_1)_{\theta,q}}\eqdef\left(\int_0^\infty t^{-\theta}K(t,a)^q{\rm d}t\right)^{\frac{1}{q}}<\infty\right\},
\end{equation}
and for $q=\infty$,
\begin{equation}
\nonumber
\displaystyle (A_0,A_1)_{\theta,\infty}=\left\{a\in A_0+A_1\big{|}\,\|a\|_{(A_0,A_1)_{\theta,\infty}}\eqdef \displaystyle\sup_{0<t<\infty}t^{-\theta}K(t,a)^q<\infty\right\},
\end{equation}
with
\begin{equation}
\nonumber
K(t,a)\eqdef\displaystyle\inf_{a=a_1+a_2}\left(\|a_0\|_{A_0}+t\|a_1\|_{A_1}\right)\quad\mbox{for}\; a\in A_0+A_1.
\end{equation}
\end{definition}
\begin{remark}
Classical examples of interpolation spaces are the Lorentz spaces $L^{p,q}(\Omega)=(L^1(\Omega),L^\infty(\Omega))_{\frac{p-1}{p},q}$ and $L^{p,\infty}(\Omega)$ known as the weak $L^p$\--space of Marcienkewicz (note that $L^{p,p}(\Omega)=L^p(\Omega)$).
\end{remark}
We now recall some properties satisfied by interpolation spaces that we will use in the proof of Theorem~\ref{regularitydistri-equation} (see \cite[Thm. 1.3.3.1, Par. 1.3.3, p.25 and Par. 1.12.2, Thm 1.12.2.1, p.69]{Tr}):
\begin{proposition}
\label{interpolationp}
Let $A_0$, $A_1$ be two Banach spaces. Let $\theta\in (0,1)$, $1\leq q<\infty$.
Then,
\begin{itemize}
\item[(i)] $A_0\cap A_1\subset (A_0,A_1)_{\theta,q}\subset A_0+A_1$ and $(A_0,A_1)_{\theta,q}=(A_1,A_0)_{1-\theta,q}$;
\item[(ii)] if $q\leq \tilde{q}<\infty$, then 
\begin{equation}
\nonumber
(A_0,A_1)_{\theta,1}\subset(A_0,A_1)_{\theta,q}\subset (A_0,A_1)_{\theta,\tilde{q}}\subset(A_0,A_1)_{\theta,\infty};
\end{equation}
\item[(iii)] if $A_0\subset A_1$, then for $\theta <\tilde{\theta}<1$, $1\leq \tilde{q}\leq \infty$, $(A_0,A_1)_{\theta,q}\subset (A_0,A_1)_{\tilde{\theta},\tilde{q}}$ holds;
\item[(iv)] $\exists c_{\theta,q}$ such that $\forall a\in A_0\cap A_1$, $\|a\|_{(A_0,A_1)_{\theta,q}}\leq c_{\theta,q}\|a\|_{A_0}^{1-\theta}\|a\|_{A_1}^\theta$.
\item[(v)] (Duality Theorem) If $A_0\cap A_1$ is dense both in $A_0$ and $A_1$ then $$(A_0,A_1)_{\theta,p}'=(A_0',A_1')_{\theta,\frac{p}{p-1}}.$$
\item[(vi)] (Interpolation Theorem) Let $B_0$, $B_1$ be two Banach spaces. If $L$ is a linear continuous mapping from $A_0$ to $B_0$ as well as from $A_1$ to $B_1$ then $L$ is linear continuous from $(A_0,A_1)_{\theta,q}$ to $(B_0,B_1)_{\theta,q}$.
\end{itemize}
\end{proposition}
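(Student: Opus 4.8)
The statement collects classical properties of the real interpolation functor $(\cdot,\cdot)_{\theta,q}$ associated with the Peetre $K$\--method; in a fully self\--contained account one simply refers to {\sc Triebel} \cite{Tr}. The plan is to derive items (i)--(iv) and (vi) by short direct arguments from the definition, keeping at hand the elementary facts that $K(\cdot,a)$ is nonnegative, nondecreasing and concave in $t$, with $K(t,a)\le\max(1,t/s)K(s,a)$ and $K(t,a;A_0,A_1)=t\,K(t^{-1},a;A_1,A_0)$, and that $K(t,a)\le\min(\|a\|_{A_0},t\|a\|_{A_1})$ whenever $a\in A_0\cap A_1$.

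For (i): the inclusion $(A_0,A_1)_{\theta,q}\subset A_0+A_1$ is built into the definition (equivalently $K(1,a)<\infty$), the inclusion $A_0\cap A_1\subset(A_0,A_1)_{\theta,q}$ follows by inserting $K(t,a)\le\min(\|a\|_{A_0},t\|a\|_{A_1})$ and splitting the defining integral at $t=\|a\|_{A_0}/\|a\|_{A_1}$, and the symmetry $(A_0,A_1)_{\theta,q}=(A_1,A_0)_{1-\theta,q}$ follows from $K(t,a;A_0,A_1)=t\,K(t^{-1},a;A_1,A_0)$ and the change of variable $t\mapsto t^{-1}$. For (ii): the key point is that finiteness of any one of the norms in the chain forces $g(t)\eqdef t^{-\theta}K(t,a)$ to be bounded on $(0,\infty)$ (use monotonicity and concavity of $K$ to estimate $g(t_0)$ by a multiple of the $L^q$\--mass of $g$ over, say, $(t_0/2,t_0)$); the chain then reduces to the trivial inclusions $L^1\cap L^\infty\subset L^q\cap L^\infty\subset L^{\tilde q}$ for the measure ${\rm d}t/t$. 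For (iii): when $A_0\subset A_1$ one has $A_0+A_1=A_1$, so $K(t,a)\le t\|a\|_{A_1}$ for all $t$, while $a\in(A_0,A_1)_{\theta,q}$ gives $K(t,a)\le m\,t^{\theta}$ with $m=\sup_{t>0}g(t)<\infty$; splitting the $\tilde\theta$\--integral at $t=1$ and using the first bound near the origin (where $\tilde\theta<1$ enters) and the second near infinity (where $\theta<\tilde\theta$ enters) gives the embedding, for every $\tilde q\le\infty$. For (iv): the same split at $t_\ast=\|a\|_{A_0}/\|a\|_{A_1}$, with the two one\--sided integrals now computed explicitly, yields $\|a\|_{(A_0,A_1)_{\theta,q}}\le c_{\theta,q}\|a\|_{A_0}^{1-\theta}\|a\|_{A_1}^{\theta}$ with $c_{\theta,q}$ an explicit function of $\theta$ and $q$. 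For (vi): if $L:A_i\to B_i$ is bounded with norm $M_i$, then for any decomposition $a=a_0+a_1$ one has $\|La_0\|_{B_0}+t\|La_1\|_{B_1}\le M_0\big(\|a_0\|_{A_0}+(M_1 t/M_0)\|a_1\|_{A_1}\big)$, hence $K(t,La;B_0,B_1)\le M_0\,K(M_1 t/M_0,a;A_0,A_1)$; substituting into the defining integral and rescaling $t$ shows $L:(A_0,A_1)_{\theta,q}\to(B_0,B_1)_{\theta,q}$ is bounded with norm at most $M_0^{1-\theta}M_1^{\theta}$.

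The one assertion that is not a short manipulation of the $K$\--functional is the duality theorem (v): identifying $(A_0,A_1)_{\theta,p}'$ with $(A_0',A_1')_{\theta,p'}$ requires the density hypothesis on $A_0\cap A_1$, the description of the dual norm through the $J$\--functional, and the equivalence of the $K$\-- and $J$\--methods. I expect this to be the main obstacle of the proposition, and rather than reproduce it I would simply cite \cite[Par.~1.11.2]{Tr}; in the present paper (v) enters only through its consequences for Sobolev spaces, e.g. with $A_0=L^2(\Omega)$ and $A_1=H^k_0(\Omega)$, where the density requirement is automatic.
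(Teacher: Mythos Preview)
Your proposal is correct, and in fact goes well beyond what the paper does: the paper offers no proof at all for this proposition, treating it purely as a recall of standard facts and citing {\sc Triebel} \cite[Thm.~1.3.3.1, Par.~1.3.3, p.~25 and Par.~1.12.2, Thm~1.12.2.1, p.~69]{Tr} for everything. Your direct arguments from the $K$\--functional (monotonicity and concavity of $K(\cdot,a)$, the symmetry relation, the pointwise bound $K(t,a)\le\min(\|a\|_{A_0},t\|a\|_{A_1})$, and the rescaling trick for (vi)) are the standard ones and are carried out correctly; your decision to cite rather than reproduce the $J$\--method duality argument for (v) is exactly in line with the paper's own stance. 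In short, you have supplied a self\--contained justification where the paper is content with a reference, and nothing in your sketch is in tension with the paper's treatment.
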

Finally, we give the definition and some basic properties of the space of traces introduced by {\sc J.~L. Lions} (see {\sc triebel} \cite{Tr} and {\sc Benssoussan-Da Prato-Delfour-Mitter} \cite{BePrDeMi} for more general details):
\begin{definition}
\label{deftracespace}
Let $X_0$, $X_1$ two Banach spaces such that $X_0\subset X_1$ (densely). For $1<q<\infty$, let $W_q(0,\infty,X_0,X_1)$ and $T(q,X_0,X_1)$ the Banach spaces
$$
W_q(0,\infty,X_0,X_1) \eqdef  \left\{u\in L^q(0,\infty;\,X_0)|\,u_t\in L^q(0,\infty;\,X_1 )\right\},
$$
equipped with the norm $\left(\int_0^\infty\|u\|_{X_0}^q+\|u_t\|_{X_1}^q{\rm d}t\right)^{\frac{1}{q}}$ and
\begin{equation}
\nonumber
T(q,X_0,X_1)\eqdef\left\{x\in X_1\,:\,\exists u\in W_q(0,\infty,X_0,X_1),\, u(0)=x\right\},
\end{equation}
endowed by the norm
\begin{equation}
\nonumber
\displaystyle\|x\|_{T(q,X_0,X_1)}\eqdef \inf\left\{\|u\|_{W_q(0,\infty,X_0,X_1)}\,:\,u(0)=x\right\}.
\end{equation}
\end{definition}
The space of traces $T(q,X_0,X_1)$ (under the assumptions given in Definition~\ref{deftracespace}) has the following properties (see \cite[Thm. 1.8.2.1 (2), Par.1.8.2, p.44]{Tr} or \cite[Part II, Chap. 1, Par. 4.2]{BePrDeMi}):
\begin{proposition}
\label{propertiestracespace}
Assume that the conditions given in Definition~\ref{deftracespace} hold. Then, we have:
\begin{enumerate}
\item $X_0\subset T(q,X_0,X_1)\subset X_1$;
\item $u\in C\left([0,\infty);T(q,X_0,X_1)\right)$ for any $u\in W_q(0,\infty, X_0, X_1)$;
\item ({\sl Equivalence Theorem}) let $0<\theta=\frac{1}{q}<1$, then $T(q,X_0,X_1)\eqsim (X_0,X_1)_{\theta,q}$.
\end{enumerate}
\end{proposition}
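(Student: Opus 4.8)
All three statements are classical results of Lions and Peetre; I will reduce them to the construction of time-dependent liftings and to Hardy's inequality for the averaging operator $g\mapsto t^{-1}\int_0^t g(s)\,{\rm d}s$ on $L^q(0,\infty)$, essentially reproducing \cite[Par.~1.8.2]{Tr}. \textbf{Property 1.} The inclusion $T(q,X_0,X_1)\subset X_1$ is built into Definition~\ref{deftracespace}; to see it is continuous, note that every $u\in W_q(0,\infty,X_0,X_1)$ has $u'\in L^1_{\rm loc}(0,\infty;X_1)$, hence $u(0)=u(t)-\int_0^t u'(s)\,{\rm d}s$ in $X_1$, and integrating the resulting bound for $\|u(0)\|_{X_1}$ over $t\in(0,1)$, then taking the infimum over all admissible liftings, gives $\|x\|_{X_1}\leq C\|x\|_{T(q,X_0,X_1)}$. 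For the reverse inclusion $X_0\subset T(q,X_0,X_1)$, I would fix $\zeta\in C^\infty_c([0,\infty))$ with $\zeta(0)=1$ and, for $x\in X_0$, use the lifting $u(t)\eqdef\zeta(t)x$: then $u\in L^q(0,\infty;X_0)$ and, by the embedding $X_0\subset X_1$, $u'=\zeta'x\in L^q(0,\infty;X_1)$, whence $\|x\|_{T(q,X_0,X_1)}\leq\|u\|_{W_q}\leq C_\zeta\|x\|_{X_0}$.

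\textbf{Property 2.} The key remark is that for $u\in W_q(0,\infty,X_0,X_1)$ and $t\geq 0$ the shift $u_t(\cdot)\eqdef u(t+\cdot)$ again belongs to $W_q$ with $u_t(0)=u(t)$; hence $u(t)\in T(q,X_0,X_1)$ with $\|u(t)\|_{T(q,X_0,X_1)}\leq\|u_t\|_{W_q}$, the right-hand side being a tail integral which moreover tends to $0$ as $t\to\infty$. To obtain continuity at a point $t_0$, I would argue by approximation: given $\varepsilon>0$, reflecting $u$ across $0$ and mollifying in time produces $v\in W_q(0,\infty,X_0,X_1)\cap C([0,\infty);X_0)$ with $\|u-v\|_{W_q}<\varepsilon$, and then
\[
\|u(t)-u(t_0)\|_{T(q,X_0,X_1)}\leq C\|v(t)-v(t_0)\|_{X_0}+\|(u-v)(t)\|_{T(q,X_0,X_1)}+\|(u-v)(t_0)\|_{T(q,X_0,X_1)};
\]
the last two terms are each $\leq\|u-v\|_{W_q}<\varepsilon$ by the shift estimate, while the first tends to $0$ as $t\to t_0$ since $v$ is continuous with values in $X_0$. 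Letting $\varepsilon\to 0$ yields $u\in C([0,\infty);T(q,X_0,X_1))$.

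\textbf{Property 3.} Write $\theta=1/q$. For the inclusion $T(q,X_0,X_1)\subset(X_0,X_1)_{\theta,q}$: given $x=u(0)$ with $u\in W_q$, decompose, for each $t>0$,
\[
x=\frac1t\int_0^t u(s)\,{\rm d}s-\frac1t\int_0^t\!\!\int_0^s u'(\sigma)\,{\rm d}\sigma\,{\rm d}s,
\]
the first summand lying in $X_0$ and the second in $X_1$; this bounds $K(t,x)$ by the sum of their norms, and inserting the estimate into the integral defining $\|x\|_{(X_0,X_1)_{\theta,q}}$ and applying Hardy's inequality gives $\|x\|_{(X_0,X_1)_{\theta,q}}\leq C\|u\|_{W_q}$. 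For the reverse inclusion, given $x\in(X_0,X_1)_{\theta,q}$ I would choose near-optimal $K$-decompositions $x=x_0(t)+x_1(t)$ with $\|x_0(t)\|_{X_0}+t\|x_1(t)\|_{X_1}\leq 2K(t,x)$ and build a lifting $u$ with $u(0)=x$ in $X_1$ by the standard Lions averaging of the families $x_0(\cdot)$ and $x_1(\cdot)$, then verify $u\in L^q(0,\infty;X_0)$ and $u'\in L^q(0,\infty;X_1)$ once more via Hardy's inequality, obtaining $\|x\|_{T(q,X_0,X_1)}\leq C\|x\|_{(X_0,X_1)_{\theta,q}}$. Together these two inclusions give the asserted equivalence of norms.

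I expect the main obstacle to be this last, reverse inclusion in Property~3: turning a merely measurable family of $K$-functional decompositions into a genuine lifting in $W_q$ and controlling the two weighted $L^q$-in-time norms is exactly the point where the precise form of Hardy's inequality and the relation $\theta=1/q$ are indispensable (for $\theta\neq 1/q$ the trace characterization is false). The second most delicate ingredient is the time-mollification near $t=0$ needed in Property~2 to produce the $X_0$-continuous approximant $v$.
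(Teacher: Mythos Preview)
The paper does not actually prove this proposition: it is stated as a recalled classical result, with a parenthetical reference to \cite[Thm.~1.8.2.1~(2), Par.~1.8.2, p.~44]{Tr} and \cite[Part~II, Chap.~1, Par.~4.2]{BePrDeMi}, and no argument is given in the paper itself. Your proof sketch is therefore not competing with anything in the paper; rather, you are supplying the details that the paper defers to the literature.

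What you have written is essentially the classical Lions--Peetre argument, and indeed reproduces the line of proof in the cited paragraph of Triebel. The ingredients you identify---the cut-off lifting $\zeta(t)x$ for Property~1, the shift-plus-mollification for Property~2, and the averaging decomposition $x=\frac1t\int_0^t u(s)\,{\rm d}s-\frac1t\int_0^t\int_0^s u'(\sigma)\,{\rm d}\sigma\,{\rm d}s$ together with Hardy's inequality for Property~3---are exactly the standard ones, and your identification of the reverse inclusion in Property~3 as the delicate step (where the relation $\theta=1/q$ is essential) is accurate. As a proof outline this is correct; the only caveat is that the ``standard Lions averaging'' in the reverse inclusion deserves one more line of explicitness (one usually takes $u(t)=\zeta(t)\,\frac1t\int_0^t x_0(s)\,{\rm d}s$ or a variant thereof and checks $u(0)=x$ in $X_1$ via $x_1(t)\to 0$), but this is a routine expansion rather than a gap.
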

We recall now some basic facts about fractional powers of $-\Delta$ with domain $\mathcal{D}(-\Delta)=H^2(\Omega)\cap H_0^1(\Omega)$, i.e. $(-\Delta)^\theta$ with domain  ${\mathcal D}((-\Delta)^\theta)$ in $L^2(\Omega)$  for $\theta\in [0,1]$ (see for instance \cite[Par. 1.15.1, p. 98 and Par. 1.18.10, p. 141]{Tr} or \cite[Chap. 2, Par. 2.6]{Pazy}  for the definition):
\begin{proposition}
\label{fractionalpower}
Let $\theta\in [0,1]$.
\begin{enumerate}
\item ${\mathcal D}((-\Delta)^\theta)=({\mathcal D}(\Delta),L^2(\Omega))_{1-\theta,2}$
\item $\displaystyle{\mathcal D}((-\Delta)^\theta)=\left\{\begin{array}{ll}
 H^{2\theta}(\Omega)\;\;&\mbox{if}\; 0\leq\theta<\frac{1}{4},\\
\tilde{H}^{\frac{1}{2}}(\Omega)\;\;&\mbox{if }\theta=\frac{1}{4},\\
H^{2\theta}_0(\Omega)\;\;&\mbox{if}\; \frac{1}{4}<\theta\leq 1.
\end{array}\right.$\\
where $\tilde{H}^{\frac{1}{2}}(\Omega)\eqdef\{v\in H^{\frac{1}{2}}(\Omega)|\,\displaystyle d(x)^{-\frac{1}{2}} v \in L^2(\Omega)\}$.
\item $-\Delta^\theta$ is an isomorphism from ${\mathcal D}(-\Delta)$ onto ${\mathcal D}((-\Delta)^{1-\theta})$ as well as from $L^2(\Omega)$ onto $({\mathcal D}((-\Delta)^\theta))'$ (the dual space of  $({\mathcal D}((-\Delta)^\theta))$).
\end{enumerate}
\end{proposition}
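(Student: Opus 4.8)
The plan is to obtain the three assertions from the abstract theory of fractional powers of positive self-adjoint operators, combined with the classical identification of Sobolev-type interpolation spaces; nothing here is really specific to our problem. First I would record that, thanks to the smoothness of $\partial\Omega$ and elliptic regularity, $-\Delta$ with domain ${\mathcal D}(-\Delta)=H^2(\Omega)\cap H^1_0(\Omega)$ is a positive self-adjoint operator on $L^2(\Omega)$ with compact resolvent, so that its powers $(-\Delta)^\theta$, $\theta\in[0,1]$, are well defined by the spectral theorem and are again positive self-adjoint. The spectral representation $\|(-\Delta)^\theta v\|_{L^2}^2=\int_0^\infty\lambda^{2\theta}\,{\rm d}\|E_\lambda v\|_{L^2}^2$ lets one compute the $K$-functional of the couple $({\mathcal D}(-\Delta),L^2(\Omega))$ explicitly and conclude
$$
{\mathcal D}((-\Delta)^\theta)=({\mathcal D}(-\Delta),L^2(\Omega))_{1-\theta,2}
$$
with equivalent norms; alternatively one simply quotes {\sc Triebel} \cite[Par.~1.18.10]{Tr}. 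This is assertion (1).

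For assertion (2) I would insert ${\mathcal D}(-\Delta)=H^2(\Omega)\cap H^1_0(\Omega)$ into (1) and invoke the identification of $(H^2(\Omega)\cap H^1_0(\Omega),L^2(\Omega))_{1-\theta,2}$ with Sobolev spaces, with or without the homogeneous Dirichlet condition, from {\sc Grisvard} \cite{Gr} and {\sc Triebel} \cite{Tr}. The mechanism is that the boundary trace $v\mapsto v|_{\partial\Omega}$ is bounded on $H^s(\Omega)$ exactly for $s>\frac12$, so along the scale $H^{2\theta}(\Omega)=(H^2(\Omega),L^2(\Omega))_{1-\theta,2}$ the Dirichlet constraint is retained under interpolation precisely when $2\theta>\frac12$, is vacuous when $2\theta<\frac12$, and degenerates at the borderline $2\theta=\frac12$ to the Lions--Magenes space; a Hardy-type inequality then identifies this borderline space with the weighted space $\{v\in H^{1/2}(\Omega)\,:\,d(x)^{-1/2}v\in L^2(\Omega)\}=\tilde H^{1/2}(\Omega)$. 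This produces the three cases $H^{2\theta}(\Omega)$ for $0\le\theta<\frac14$, $\tilde H^{1/2}(\Omega)$ for $\theta=\frac14$, and $H^{2\theta}_0(\Omega)$ for $\frac14<\theta\le1$.

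Assertion (3) follows directly from the functional calculus: for $\theta\le\sigma$, $(-\Delta)^\theta$ maps ${\mathcal D}((-\Delta)^\sigma)$ isomorphically onto ${\mathcal D}((-\Delta)^{\sigma-\theta})$ (with the graph norms), so the choice $\sigma=1$ gives the isomorphism ${\mathcal D}(-\Delta)\to{\mathcal D}((-\Delta)^{1-\theta})$. For the second half, $(-\Delta)^\theta$ is an isometric isomorphism of ${\mathcal D}((-\Delta)^\theta)$, equipped with the graph norm, onto $L^2(\Omega)$; dualising this map and using that $(-\Delta)^\theta$ is self-adjoint, its transpose is the continuous extension of $(-\Delta)^\theta$ to $L^2(\Omega)$, and it is an isomorphism from $L^2(\Omega)=(L^2(\Omega))'$ onto $({\mathcal D}((-\Delta)^\theta))'$.

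The main obstacle is the bookkeeping in the second step: one must carefully track which Sobolev space arises from the real interpolation of $H^2(\Omega)\cap H^1_0(\Omega)$ with $L^2(\Omega)$ in each range of $\theta$, and in particular verify that the exceptional space at $\theta=\frac14$ is precisely the weighted space $\tilde H^{1/2}(\Omega)$ appearing in the statement, rather than $H^{1/2}(\Omega)$ or $H^{1/2}_0(\Omega)$; steps (1) and (3) are routine applications of standard spectral-theoretic and interpolation facts.
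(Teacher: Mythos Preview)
Your proposal is correct and follows essentially the same route as the paper: positivity and self-adjointness of $-\Delta$ together with {\sc Triebel}~\cite{Tr} for (1), the {\sc Grisvard}/{\sc Triebel} characterisation of $(H^2(\Omega)\cap H^1_0(\Omega),L^2(\Omega))_{1-\theta,2}$ for (2), and the functional calculus plus a duality argument for (3). The paper's own proof is terser, simply pointing to the relevant theorems in \cite{Tr} and \cite{Gr}, whereas you spell out the underlying mechanisms; but the logical structure is identical.
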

\begin{remark}
Note that since we are in a Hilbertian framework, the real interpolation space $({\mathcal D}(\Delta),L^2(\Omega))_{1-\theta,2}$ coincides with the complex interpolation space $[{\mathcal D}(\Delta),L^2(\Omega)]_{1-\theta}$, see \cite[Par. 1.9, p. 55 and Par. 1.8.10, Rem. 3 p. 143]{Tr}.
\end{remark}
\begin{proof}
The first point follows from the fact that $-\Delta$ is positive and sel-adjoint, see \cite[Thm. 1.18.10.1, par. 1.18.1 p.141]{Tr}. The second point is a consequence of the characterization of $(H^2(\Omega)\cap H_0^1(\Omega),L^2(\Omega))_{1-\theta,2}$ obtained from 
\cite[Thm. 4.3.3.1, Par. 4.3.3, p.321]{Tr} or 
from \cite{GRISVARD}. The first part of the last point follows from \cite[Thm. 1.15.2.1, p.101]{Tr} and the second part is deduced with a duality argument (combined with the self-adjointness of $-\Delta$) from the fact that $-\Delta$ is an isomorphism from ${\mathcal D}((-\Delta)^{\theta})$ onto $L^2(\Omega)$.
\end{proof}
For $1<q<\infty$, setting the Banach space
$$
{\mathcal X}_{q,\theta,T}\eqdef W_q(0,T;\,{\mathcal D}((-\Delta)^{1-\theta}),\,({\mathcal D}((-\Delta)^\theta))'),
$$
we have the following result:
\begin{lemma}
\label{maximalregularity}
Let $\theta\in [0,1)$ and $q>\frac{2}{1-\theta}$. let $L_T$ the linear operator defined by $L_T(f)\eqdef z$, where $z$ is the solution to
\begin{eqnarray*}
\left\{\begin{array}{c}
\displaystyle z_t-\Delta z=f\quad\mbox{in}\; Q_T,\\
\displaystyle z=0\;\mbox{on }\Sigma_T;\quad z(0)=0\;\mbox{in}\; \Omega.
\end{array}\right.
\end{eqnarray*} 
Then $L_T$ is a bounded operator from $L^q(0,T;\,({\mathcal D}((-\Delta)^\theta)')$ into ${\mathcal X}_{q,\theta,T}$ as well as from $L^q(0,T;\,({\mathcal D}((-\Delta)^\theta)')$ into $C([0,T],\,{\mathcal D}((-\Delta)^{1-\theta-\frac{2}{q}}))$.
\end{lemma}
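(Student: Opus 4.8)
\emph{Approach.} The plan is to transfer the classical $L^q$\--maximal regularity of the Dirichlet Laplacian on $L^2(\Omega)$ to the shifted scale and then extract the time regularity from the trace\--space machinery of Propositions~\ref{propertiestracespace} and \ref{interpolationp}. Throughout, write $X_\alpha\eqdef{\mathcal D}((-\Delta)^\alpha)$ for $\alpha\geq 0$ and $X_{-\alpha}\eqdef({\mathcal D}((-\Delta)^\alpha))'$ for $\alpha>0$, so that $X_0=L^2(\Omega)$, $X_1={\mathcal D}(\Delta)$ and, by Proposition~\ref{fractionalpower}(3) and its consequences, $(-\Delta)^\beta$ is an isomorphism of $X_\gamma$ onto $X_{\gamma-\beta}$ commuting with $\Delta$ and $\partial_t$. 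The only external input is that, since $-\Delta$ with domain $H^2(\Omega)\cap H^1_0(\Omega)$ is positive and self\--adjoint, the solution operator $L^0_T$ of the heat equation with zero initial datum is bounded from $L^q(0,T;X_0)$ into $W_q(0,T;X_1,X_0)$ for every $q\in(1,\infty)$ (Hilbert\--space $L^q$\--maximal regularity).

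\emph{First assertion.} Given $f\in L^q(0,T;X_{-\theta})$, set $z\eqdef(-\Delta)^{\theta}L^0_T\big((-\Delta)^{-\theta}f\big)$. Since $(-\Delta)^{-\theta}$ maps $L^q(0,T;X_{-\theta})$ into $L^q(0,T;X_0)$ pointwise in time, $L^0_T$ maps the latter into $W_q(0,T;X_1,X_0)$, and $(-\Delta)^{\theta}$ maps $W_q(0,T;X_1,X_0)$ into $W_q(0,T;X_{1-\theta},X_{-\theta})={\mathcal X}_{q,\theta,T}$, we get $z\in{\mathcal X}_{q,\theta,T}$ with a bound by $\|f\|_{L^q(0,T;X_{-\theta})}$. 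Applying $(-\Delta)^{\theta}$ to $\partial_t w-\Delta w=(-\Delta)^{-\theta}f$, $w(0)=0$ shows $\partial_t z-\Delta z=f$, $z(0)=0$, so $z=L_T f$ by uniqueness of the solution with zero initial datum. Hence $L_T=(-\Delta)^{\theta}\circ L^0_T\circ(-\Delta)^{-\theta}$ is bounded from $L^q(0,T;X_{-\theta})$ into ${\mathcal X}_{q,\theta,T}$, for arbitrary $q\in(1,\infty)$.

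\emph{Second assertion.} From $z=L_Tf\in{\mathcal X}_{q,\theta,T}=W_q(0,T;X_{1-\theta},X_{-\theta})$ and Proposition~\ref{propertiestracespace}(2), $z\in C([0,T];T(q,X_{1-\theta},X_{-\theta}))$; by the Equivalence Theorem (Proposition~\ref{propertiestracespace}(3)) and Proposition~\ref{interpolationp}(i), $T(q,X_{1-\theta},X_{-\theta})\simeq(X_{1-\theta},X_{-\theta})_{1/q,q}$. To locate this space in the scale, apply Proposition~\ref{fractionalpower}(1) to the realization of $-\Delta$ on $X_{-\theta}$ (again positive self\--adjoint, with domain $X_{1-\theta}$): this gives $(X_{1-\theta},X_{-\theta})_{\sigma,2}=X_{1-\theta-\sigma}$ for $\sigma\in(0,1)$. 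Since $q>\frac{2}{1-\theta}\geq 2$, we have $\frac1q<\frac2q<1$, so Proposition~\ref{interpolationp}(iii) yields
\[
(X_{1-\theta},X_{-\theta})_{1/q,q}\;\hookrightarrow\;(X_{1-\theta},X_{-\theta})_{2/q,2}\;=\;X_{1-\theta-2/q}\;=\;{\mathcal D}\big((-\Delta)^{1-\theta-2/q}\big),
\]
a genuine closed subspace of $X_0$ because $1-\theta-\frac2q>0$. Therefore $L_T$ maps $L^q(0,T;X_{-\theta})$ boundedly into $C([0,T];{\mathcal D}((-\Delta)^{1-\theta-2/q}))$.

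\emph{Main difficulty and a variant.} There is nothing deep here: the proof is a careful bookkeeping along the fractional\--power scale, the delicate point being that real interpolation of two members of the scale is again continuously contained in a member of the scale with the predicted exponent --- exactly the content borrowed from Propositions~\ref{fractionalpower} and \ref{interpolationp} --- and it is also where the slack between the optimal exponent $1-\theta-\frac1q$ and the convenient $1-\theta-\frac2q$ enters (irrelevant later, where $q\to\infty$). As an alternative for the second assertion, one may avoid the trace spaces and argue directly from Duhamel's formula $z(t)=\int_0^t e^{(t-\sigma)\Delta}f(\sigma)\,{\rm d}\sigma$ together with the analytic\--semigroup estimate $\|(-\Delta)^{1-\theta-2/q}e^{s\Delta}\|_{{\mathcal L}(X_{-\theta})}\lesssim s^{-(1-2/q)}$ and Hölder's inequality in $\sigma$ (the exponent $(1-\tfrac2q)q'=\tfrac{q-2}{q-1}<1$ being integrable near $0$), upgrading pointwise estimates to continuity in $t$ by density of $C^\infty_c((0,T);X_{-\theta})$; the genuinely imported fact --- $L^q$\--maximal regularity --- is however unavoidable for the first assertion.
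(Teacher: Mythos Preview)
Your argument is correct and follows the same strategy as the paper: conjugate the $L^q$\--maximal regularity of the heat equation by $(-\Delta)^{\pm\theta}$ for the first assertion, then identify the trace space and embed it into $X_{1-\theta-2/q}$ via Proposition~\ref{interpolationp}(iii). The only point worth flagging is that Proposition~\ref{propertiestracespace}(2), as stated in the paper, is formulated for $W_q(0,\infty;\cdot,\cdot)$, not for the finite interval $(0,T)$; the paper therefore inserts an explicit extension step (extend $f$ by zero past $T$, solve on a larger interval, then truncate with a smooth cutoff $\chi$) before invoking the trace result, whereas you apply it directly on $[0,T]$ --- this is of course standard and easily justified by exactly that extension, but you should make it explicit to stay within the paper's framework. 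A secondary cosmetic difference is that the paper identifies $(X_{1-\theta},X_{-\theta})_{1/q,q}$ by pulling $(-\Delta)^\theta$ through the interpolation functor (Proposition~\ref{interpolationp}(vi)) and reducing to $(X_1,X_0)_{1/q,q}$, while you instead invoke Proposition~\ref{fractionalpower}(1) for the realization of $-\Delta$ on $X_{-\theta}$; both routes are equivalent.
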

\begin{proof}
Since $(-\Delta)$ generates an analytic semigroup, denoted by $e^{t\Delta}$, in $L^2(\Omega)$, then $L_T$ is a continuous operator from $L^q(0,T;\, L^2(\Omega))$ onto $W_q(0,T;\,{\mathcal D}(-\Delta),$ $L^2(\Omega))$ (see \cite[Par. 4.2]{DeHiPr2003}). Therefore, since $(-\Delta)^\theta$ is an isomorphism from ${\mathcal D}(-\Delta)$ onto ${\mathcal D}((-\Delta)^{1-\theta})$ and from $L^2(\Omega)$ onto ${\mathcal D}((-\Delta)^\theta)'$ and from the fact that $(-\Delta)^\theta$ and $L$ commute, the first statement of the Lemma follows. We prove now the second statement. Let $T'>T$ and set
\begin{eqnarray*}
\hat{f}\in L^\infty(Q_{T'})\;\mbox{ defined by}\;\hat{f}(t,\cdot)\left\{\begin{array}{lc}
&f(t,\cdot)\;\mbox{if }t\leq T\\
&0\;\mbox{ otherwise}
\end{array}\right.
\end{eqnarray*}
and $\hat{u}\eqdef L_{T'}(\hat{f})$. Then, we define the extension function $u_1(t,\cdot)= \hat{u}(t,\cdot)$  if $t\leq T'$ and $u_1(t,\cdot)=0$ otherwise. We have that $u_1|_{Q_T}=L_T(f)=u$. Let
$\chi\,:\,\R^+\to \R^+$ be a smooth non\--increasing function such that $\chi\equiv 1$ on $[0,T]$ and $\chi\equiv 0$ on $[T',+\infty)$. Then, for $u\in {\mathcal X}_{q,\theta,T}$ we have $\chi u_1\in {\mathcal X}_{q,\theta,\infty}$. Thus, from Proposition~\ref{propertiestracespace}, $\chi u_1\in C([0,+\infty),\left({\mathcal D}((-\Delta)^{1-\theta}),({\mathcal D}((-\Delta)^\theta))'\right)_{\frac{1}{q},q})$ and therefore, 
$$
u\in C([0,T],\left({\mathcal D}((-\Delta)^{1-\theta}),({\mathcal D}((-\Delta)^\theta))'\right)_{\frac{1}{q},q}).
$$
Furthermore, from the interpolation Theorem we have
\begin{equation}
\nonumber
\left({\mathcal D}((-\Delta)^{1-\theta}),({\mathcal D}((-\Delta)^\theta))'\right)_{\frac{1}{q},q}=(-\Delta)^\theta\left(({\mathcal D}(-\Delta),L^2(\Omega))_{\frac{1}{q},q}\right)
\end{equation}
and from Propositions~\ref{interpolationp} and \ref{fractionalpower}, we get
\begin{equation}
\nonumber
\left({\mathcal D}(-\Delta),L^2(\Omega)\right)_{\frac{1}{q},q}\subset\left({\mathcal D}(-\Delta),L^2(\Omega)\right)_{\frac{2}{q},1}\subset\left({\mathcal D}(-\Delta),L^2(\Omega)\right)_{\frac{2}{q},2}={\mathcal D}((-\Delta)^{1-\frac{2}{q}}).
\end{equation}
From above, it follows that 
$$
u\in C([0,T],{\mathcal D}((-\Delta)^{1-\theta-\frac{2}{q}})).
$$
\end{proof}
Let us also recall the following Hardy type inequalities which can be obtained from \cite[Par. 3.2.6, Lem. 3.2.6.1,  p.259]{Tr}.
\begin{lemma}\label{Hardy} Let $s\in [0,2]$ such that $s\neq \frac{1}{2}$ and $s\neq \frac{3}{2}$. Then the following generalisation of Hardy's inequality holds:
\begin{equation}
\|d^{-s} g\|_{L^{2}(\Omega)}\leq C \|g\|_{H^{s}(\Omega)}\quad \mbox{ for all }g\in H_0^s(\Omega). \label{eq4b}
\end{equation}
\end{lemma}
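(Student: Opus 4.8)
The plan is to recover \eqref{eq4b} from the classical first--order Hardy inequality by a complex--interpolation argument (the statement is in any case a particular case of \cite[Par.~3.2.6, Lemma~3.2.6.1]{Tr} with $p=2$). Writing $M_\sigma$ for the multiplication operator $g\mapsto d^{-\sigma}g$, I would first establish the three ``endpoint'' bounds: (a) $\|M_0 g\|_{L^2(\Omega)}=\|g\|_{L^2(\Omega)}$, which is trivial; (b) $\|M_1 g\|_{L^2(\Omega)}\le C\|g\|_{H^1(\Omega)}$ for $g\in H^1_0(\Omega)$, which is the usual Hardy inequality and is available because $\partial\Omega$ is smooth; and (c) $\|M_2 g\|_{L^2(\Omega)}\le C\|g\|_{H^2(\Omega)}$ for $g\in H^2_0(\Omega)$.

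For step (c) I would argue by density: it suffices to prove the bound for $g\in C_c^\infty(\Omega)$ with a constant depending only on $\Omega$ and then pass to the $H^2$--closure, since $d^{-2}g_n\to d^{-2}g$ a.e. along an $L^2$--Cauchy sequence. For $g\in C_c^\infty(\Omega)$, the weight $d^{-2}$ is bounded away from $\partial\Omega$, so the only issue is a tubular neighbourhood $\{0<d(x)<t_0\}$, where I would use normal coordinates $x\leftrightarrow(\sigma,t)$ with $t=d(x)$ and volume element comparable to ${\rm d}\sigma\,{\rm d}t$. On each fibre the smooth function $\psi(t)\eqdef g(\sigma,t)$ vanishes near $t=0$, so I can apply the weighted one--dimensional Hardy inequality $\int_0^{t_0}t^{-2\alpha}|\psi|^2\,{\rm d}t\le C_\alpha\int_0^{t_0}t^{2-2\alpha}|\psi'|^2\,{\rm d}t$ (valid for $\psi(0)=0$, $\alpha>\tfrac12$) once with $\alpha=2$ to $\psi$ and once with $\alpha=1$ to $\psi'$, obtaining $\int_0^{t_0}t^{-4}|\psi|^2\,{\rm d}t\le C\int_0^{t_0}|\psi''|^2\,{\rm d}t$; integrating over $\sigma\in\partial\Omega$ and comparing $|\psi''|$ with $\sum_{|\beta|\le2}|\partial^\beta g|$ then yields (c). I expect this to be the main obstacle, essentially because a naive iteration of the first--order Hardy inequality only produces a circular estimate for $d^{-2}g$, so the weighted one--dimensional inequality is genuinely needed; note also that here one really needs $g\in H^2_0(\Omega)$ (both traces vanishing), not merely $g\in H^2\cap H^1_0$.

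Finally I would interpolate. On the strip $0\le{\rm Re}\,z\le1$ the operators $M_z\colon g\mapsto d^{-z}g$ form an admissible analytic family with $|d^{-z}|=d^{-{\rm Re}\,z}$, so $M_{it}$ is bounded on $L^2(\Omega)$ with norm $1$ and $M_{1+it}$ is bounded from $H^1_0(\Omega)$ into $L^2(\Omega)$ by (b); Stein's interpolation theorem then gives $M_s\colon[L^2(\Omega),H^1_0(\Omega)]_s\to L^2(\Omega)$, and $[L^2(\Omega),H^1_0(\Omega)]_s=H^s_0(\Omega)$ for $s\in(0,1)$, $s\neq\tfrac12$ (cf. Proposition~\ref{fractionalpower} and \cite{Tr}). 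In the same way, on the strip $1\le{\rm Re}\,z\le2$ the family is bounded from $H^1_0(\Omega)$ into $L^2(\Omega)$ on the left edge and, by (c), from $H^2_0(\Omega)$ into $L^2(\Omega)$ on the right edge, whence $M_s\colon[H^1_0(\Omega),H^2_0(\Omega)]_{s-1}=H^s_0(\Omega)\to L^2(\Omega)$ for $s\in(1,2)$, $s\neq\tfrac32$. Combining these two ranges with the endpoints $s=0,1,2$ proves \eqref{eq4b} for all $s\in[0,2]\setminus\{\tfrac12,\tfrac32\}$; the two excluded values are exactly those at which the interpolation identity for the $H^s_0$--scale fails, the weight $d^{-1/2}$ (resp. $d^{-3/2}$) being controlled there only by a strictly stronger (Lions--Magenes type) norm, cf. the space $\tilde H^{1/2}$ appearing in Proposition~\ref{fractionalpower}.
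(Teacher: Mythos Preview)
Your argument is correct and self-contained, whereas the paper does not actually prove this lemma: it simply records it as a consequence of \cite[Par.~3.2.6, Lemma~3.2.6.1, p.~259]{Tr} and moves on. So there is no ``paper's own proof'' to compare with beyond that citation.

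A couple of minor remarks on your write-up. First, in step (c) you correctly note that applying the weighted one--dimensional Hardy inequality with $\alpha=1$ to $\psi'$ requires $\psi'(0)=0$; this is exactly why you need $g\in H^2_0(\Omega)$ (both traces vanishing) rather than merely $g\in H^2\cap H^1_0$, and your density argument through $C_c^\infty(\Omega)$ handles this cleanly. Second, for $0<s<\tfrac12$ the identity you invoke reads $[L^2(\Omega),H^1_0(\Omega)]_s=H^s(\Omega)$, but since $H^s_0(\Omega)=H^s(\Omega)$ in that range there is no discrepancy with the statement of the lemma. Your explanation of the excluded values $s=\tfrac12,\tfrac32$ is also on point: at those indices the complex interpolation lands in the Lions--Magenes spaces (e.g.\ $\tilde H^{1/2}(\Omega)$ as in Proposition~\ref{fractionalpower}) rather than in $H^s_0(\Omega)$, and the inequality genuinely fails for the latter.
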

\bigskip
\par\noindent We now prove Theorem~\ref{regularitydistri-equation}.
\begin{proof}
First, notice that with the notation of  Lemma~\ref{maximalregularity} the solution $u$ of \eqref{distri-equation-fu} obeys 
$$u(t)=e^{\Delta t}u_0+L_T\left(\frac{1}{u^\delta}+f(x,u)\right)(t).$$
In the following we suppose that $0<\eta<\frac{1}{1+\delta}$. If $0\leq \delta<\frac{1}{2}$ then since $u\in {\mathcal C}$ we have $\frac{1}{u^\delta}=O(\frac{1}{d(x)^\delta})$ and it implies that $\frac{1}{u^\delta}+f(x,u)\in L ^2(\Omega)$. Then using Lemma~\ref{maximalregularity} we obtain $L_T(\frac{1}{u^\delta}+f(x,u))\in C([0,T],H^{2-\eta}(\Omega))$.
If $\frac{1}{2}\leq\delta<1$,  then from $u\in {\mathcal C}$ we have $\frac{1}{u^\delta}=O(\frac{1}{d(x)^\delta})$ and by Lemma \ref{Hardy} it implies that $\frac{1}{u^\delta}+f(x,u)\in C([0,T],(H_0^{\delta-\frac{1}{2}+\frac{\eta}{2}}(\Omega))')= C([0,T],({\mathcal D}((-\Delta)^{\frac{\delta}{2}-\frac{1}{4}+\frac{\eta}{4}}))')$ (since $\frac{\delta}{2}-\frac{1}{4}+\frac{\eta}{4} \in (0,\frac{3}{4})$). Then Lemma~\ref{maximalregularity} with $q=\frac{4}{\eta}$ gives $L_T(\frac{1}{u^\delta}+f(x,u))\in C([0,T],H^{\frac{5}{2}-\delta-\eta}(\Omega))$. 
Next, if $\delta\geq 1$ then by Lemma \ref{Hardy} we have $\frac{1}{u^\delta}+f(x,u)\in C([0,T],(H_0^{\frac{2\delta}{\delta+1}-\frac{1}{2}+\frac{\eta}{2}}(\Omega))')= C([0,T],({\mathcal D}((-\Delta)^{\frac{\delta}{\delta+1}-\frac{1}{4}+\frac{\eta}{4}}))')$ (since $\frac{\delta}{\delta+1}-\frac{1}{4}+\frac{\eta}{4}\in (0,\frac{3}{4})$). Therefore, using Lemma~\ref{maximalregularity} with $q=\frac{2}{\eta}$ we get that $L_T(\frac{1}{u^\delta}+f(x,u))\in C([0,T],H^{\frac{5}{2}-\frac{2\delta}{\delta+1}-\eta}(\Omega))$. As a consequence, if $u_0\in X_\eta$ defined by 
\begin{equation}
X_\eta=\displaystyle\left\{\begin{array}{ll}
\displaystyle H^{2-\eta}(\Omega)\;&\mbox{if}\; \delta<\frac{1}{2},\\
\displaystyle  H^{\frac{5}{2}-\delta-\eta}(\Omega)\;&\mbox{if}\; \frac{1}{2}\leq\delta<1,\\
\displaystyle H^{\frac{5}{2}-\frac{2\delta}{\delta+1}-\eta}(\Omega)\;&\mbox{if}\; 1\leq\delta,
\end{array}\right.\label{eqXeta}
\end{equation}
then $t\to u(t)=e^{t\Delta}u_0+L_T(\frac{1}{u^\delta}+f(x,u))(t)\in C([0,T],X_\eta)$.
\end{proof}
\bigskip
\par\noindent
Then Theorem \ref{4} follows from Theorems \ref{distributionsol} and \ref{regularitydistri-equation}. We end this section by proving Theorem~\ref{5}.
\bigskip
\begin{proof}
Let $\delta<3$ and $u_0\in H^1_0(\Omega)\cap{\mathcal C}$. Then, from Theorem~\ref{3bis}, Theorem~\ref{3ter} and Theorem~\ref{4}, the solution to \eqref{solutionp=2} is unique, belongs to $C([0,+\infty),\, H^1_0(\Omega))$ and satisfies $\underline{u}\leq u\leq\overline{u}$ (with $\underline{u}$, $\overline{u}$ as in the proof of Theorem~\ref{4}), and  
\begin{equation}
\nonumber
u(t)\to u_\infty\quad\mbox{in}\;L^\infty(\Omega)\;\;\mbox{as}\; t\to +\infty,
\end{equation}
where $u_\infty$ is given by Theorem~\ref{2bis} with $p=2$. To complete the proof of Theorem~\ref{5}, let us show that
\begin{equation}
\label{asymptoticsobolev}
u(t)\to u_\infty\quad\mbox{in}\;H^1_0(\Omega)\;\;\mbox{as}\; t\to +\infty.
\end{equation}
For that, let $0<\eta<\min(\frac{1}{2},\frac{3-\delta}{2(1+\delta)})$ and note that $X_\eta$ defined by \eqref{eqXeta} is compactely embedded in $H^1_0(\Omega)$. Moreover, for any $t>0$, using Lemma~\ref{maximalregularity} as in the proof of Theorem~\ref{regularitydistri-equation} and the fact that $u_0\in {\mathcal C}\cap H^1_0(\Omega)$, we have for $t'\geq t$,
\begin{equation}
\label{compactnesstraj}
t'\to S(t')u_0=e^{t'\Delta}u_0+L_{t'}(\frac{1}{u^\delta}+f(x,u))\in C([t,+\infty),\,X_\eta),
\end{equation}
and since $\underline{u}\leq u\leq\overline{u}$, 
\begin{equation}
\label{compactnesstraj2}
\sup_{[t,+\infty]}\|u(t)\|_{X_\eta}<+\infty.
\end{equation}
From the compactness embedding of $X_\eta$ in $H^1_0(\Omega)$ together with \eqref{compactnesstraj2}, \eqref{asymptoticsobolev} follows.
\end{proof}
\bibliography{ma_biblio}
\bibliographystyle{plain}
\end{document}